\newtheorem{thm}{Theorem}
\newtheorem{cor}[thm]{Corollary}
\newtheorem{prop}[thm]{Proposition}
\newtheorem{lem}[thm]{Lemma}
\newtheorem{defn}[thm]{Definition}
\newtheorem{question}[thm]{Question}
\newtheorem{conjecture}[thm]{Conjecture}
\theoremstyle{definition}
\newtheorem{rem}[thm]{Remark}
\numberwithin{thm}{section}
\numberwithin{equation}{thm}
\newcommand{\A}{\ensuremath{{\mathbb{A}}}}
\newcommand{\bP}{\ensuremath{{\mathbb{P}}}}
\newcommand{\C}{\ensuremath{{\mathbb{C}}}}
\newcommand{\Z}{\ensuremath{{\mathbb{Z}}}}
\renewcommand{\P}{\ensuremath{{\mathbb{P}}}}
\newcommand{\Q}{\ensuremath{{\mathbb{Q}}}}
\newcommand{\bQ}{\ensuremath{{\mathbb{Q}}}}
\newcommand{\Qbar}{\ensuremath{\overline {\mathbb{Q}}}}
\newcommand{\N}{\ensuremath{{\mathbb{N}}}}
\newcommand{\p}{\ensuremath{{\mathfrak{p}}}}
\newcommand{\fp}{\ensuremath{{\mathfrak{p}}}}
\newcommand{\fq}{\ensuremath{{\mathfrak{q}}}}
\newcommand{\fm}{\ensuremath{{\mathfrak{m}}}}
\newcommand{\fo}{\ensuremath{{\mathfrak{o}}}}
\newcommand{\cZ}{\mathcal Z}
\newcommand{\cK}{\mathcal K}
\newcommand{\cL}{\mathcal L}
\newcommand{\cM}{\mathcal M}
\newcommand{\cY}{\mathcal Y}
\newcommand{\cX}{\mathcal X}
\newcommand{\cW}{\mathcal W}
\newcommand{\lra}{\longrightarrow}
\newcommand{\Kbar}{\ensuremath {\overline K}}
\newcommand{\ba}{{\boldsymbol \alpha}}
\renewcommand{\O}{\ensuremath{{\mathcal{O}}}}
\DeclareMathOperator{\Gal}{Gal}
\DeclareMathOperator{\Aut}{Aut}
\begin{document}

% \title[short text for running head]{full title}
\title[Iterated Galois groups of cubic polynomials]{Finite index theorems for iterated Galois groups of cubic polynomials}

%    Only \author and \address are required; other information is
%    optional.  Remove any unused author tags.

%    author one information
% \author[short version for running head]{name for top of paper}
\author[A. Bridy]{Andrew Bridy}
\address{Andrew Bridy\\Department of Mathematics\\ Texas A\&M University\\
College Station, TX, 77843, USA}
\email{andrewbridy@tamu.edu}

%    author two information
\author[T. J. Tucker]{Thomas J. Tucker}
\address{Thomas J. Tucker\\Department of Mathematics\\ University of Rochester\\
Rochester, NY, 14620, USA}
\email{thomas.tucker@rochester.edu}

%    \subjclass is required.
\subjclass[2010]{Primary 37P15, Secondary 11G50, 11R32, 14G25, 37P05, 37P30}

\keywords{Arithmetic Dynamics, Arboreal Galois Representations,
  Iterated Galois Groups}

\date{}

\dedicatory{}

%    Abstract is required.
\begin{abstract}
  Let $K$ be a number field or a function field. Let $f\in K(x)$ be a
  rational function of degree $d\geq 2$, and let
  $\beta\in \P^1(\Kbar)$. For all $n\in\mathbb{N}\cup\{\infty\}$, the
  Galois groups $G_n(\beta)=\Gal(K(f^{-n}(\beta))/K(\beta))$ embed
  into $\Aut(T_n)$, the automorphism group of the $d$-ary rooted tree
  of level $n$. A major problem in arithmetic dynamics is the arboreal
  finite index problem: determining when
  $[\Aut(T_\infty):G_\infty(\beta)]<\infty$.  When $f$ is a cubic
  polynomial and $K$ is a function field of transcendence degree $1$
  over an algebraic extension of $\Q$, we resolve this problem by
  proving a list of necessary and sufficient conditions for finite
  index. This is the first result that gives necessary and sufficient
  conditions for finite index, and can be seen as a dynamical analog
  of the Serre Open Image Theorem. When $K$ is a number field, our
  proof is conditional on both the $abc$ conjecture for $K$ and
  Vojta's conjecture for blowups of $\P^1 \times\P^1$. We also use our
  approach to solve some natural variants of the finite index problem
  for modified trees.
\end{abstract}

\maketitle

%    Text of article.

\section{Introduction and Statement of Results}
Let $K$ be a field.  Let $f\in K(x)$ with $d=\deg f\geq 2$ and let
$\beta\in \P^1(\Kbar)$.  For $n\in\N$, let
$K_n(f,\beta)=K(f^{-n}(\beta))$ be the field obtained by adjoining the
$n$th preimages of $\beta$ under $f$ to $K(\beta)$ (we declare that
$K(\infty)=K$).  Also set
$K_\infty(f,\beta)=\bigcup_{n=1}^\infty K_n(f,\beta)$.  For
$n\in\N\cup\{\infty\}$, define
$G_n(f,\beta)=\Gal(K_n(f,\beta)/K(\beta))$.  In most of the paper, we
will write $G_n(\beta)$ and $K_n(\beta)$, suppressing the dependence
on $f$ if there is no ambiguity.

The group $G_\infty(\beta)$ embeds into $\Aut(T_\infty)$, the automorphism 
group of an infinite $d$-ary rooted tree $T_\infty$. Recently there has been much interest in the problem 
of determining when the index $[\Aut(T_\infty):G_\infty(\beta)]$ is finite. See Section \ref{Background} for 
background on this problem and previous results. The group $G_\infty(\beta)$ is the image of an 
arboreal Galois representation, so this finite index problem is a natural analog in arithmetic dynamics of 
the finite index problem for the $\ell$-adic Galois representations associated to elliptic curves, 
resolved by Serre's celebrated Open Image Theorem~\cite{Serre}.

We resolve the finite index problem for cubic polynomials when $K$ is a function field 
of transcendence degree one over an algebraic extension of $\Q$. For such function fields, we  
prove an explicit list of necessary and sufficient conditions for finite index. 
When $K$ is a number field, our proof is conditional on both the $abc$ conjecture and Vojta's 
conjecture for blowups of $\P^1 \times\P^1$. 

To explain our main theorem, we recall some standard terminology from arithmetic dynamics. 
Let $f\in K[x]$. For $n\geq 1$, let $f^n$ denote $f$ composed with itself $n$ times. 
Let $f^0=x$, the compositional identity.  
For $\alpha\in\P^1(\Kbar)$, the \emph{forward orbit} of $\alpha$ under $f$ 
is the set $\mathcal{O}_f(\alpha)=\{f^n(\alpha):n\geq 1\}$. We say $\alpha$ is \emph{periodic} 
for $f$ if $f^n(\alpha)=\alpha$ for some $n\geq 1$ and \emph{preperiodic} for $f$ if 
$f^n(\alpha)=f^m(\alpha)$ for some $n>m\geq 1$. 
The point $\beta\in \P^1(\Kbar)$ is \emph{postcritical} for $f$ if there 
is some critical point (ramification point) $\gamma$ of $f$ such that $\beta\in\mathcal{O}_f(\gamma)$. 
We say $f$ is \emph{postcritically finite} or \emph{PCF} if only finitely many 
points in $\P^1(\Kbar)$ are postcritical for $f$. 
For $\beta\in \Kbar$, the pair $(f,\beta)$ is \emph{stable} if $f^n(x)-\beta$ is irreducible over $K(\beta)$ 
for all $n\geq 1$ and \emph{eventually stable} if the number of irreducible factors of 
$f^n(x)-\beta$ over $K(\beta)$ is bounded independently of $n$ as $n\to\infty$ 
(stability and eventual stability can 
also be defined for rational functions as in~\cite{RafeAlonEventualStability}).
If $K$ is a function field with field of constants $k$, 
then $f$ is \emph{isotrivial} if there exists a degree $1$ polynomial $\sigma\in \Kbar(x)$ such 
that $\sigma\circ f\circ \sigma^{-1}\in k[x]$.

Our main theorem is as follows.

\begin{thm}\label{thm: main}
  Let $K$ be a number field or a function field of transcendence
  degree one over an algebraic extension of $\Q$.  Let $f \in K[x]$ with
  $\deg f=3$ and let $\beta\in K$. If $K$ is a number field, assume
  the $abc$ conjecture for $K$ and Vojta's conjecture for blowups of
   $\P^1 \times\P^1$. If $K$ is a function
  field, assume that $f$ is not isotrivial.
  
  The following are equivalent:
  \begin{enumerate}
  \item The pair $(f,\beta)$ is eventually stable, 
  $\beta$ is not postcritical for $f$,
   $f$ is not PCF, 
   $f$ has distinct finite critical points $\gamma_1, \gamma_2$, and 
   $f^n(\gamma_1)\neq f^n(\gamma_2)$ for all $n\geq 1$.
   \item The group $G_\infty(\beta)$ has finite index in $\Aut(T_\infty)$.
  \end{enumerate}
\end{thm}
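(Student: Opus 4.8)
The proof breaks into the two implications, which need quite different tools. For \emph{necessity}, $(2)\Rightarrow(1)$, I would show that each of the five conditions is forced by finite index. If $(f,\beta)$ is not eventually stable, then the number of irreducible factors of $f^n(x)-\beta$ over $K(\beta)$ — which is nondecreasing in $n$ and equals the number of $G_n(\beta)$-orbits on the $n$th level of $T_\infty$ — tends to infinity, while $\Aut(T_n)$ is transitive on that level, so $[\Aut(T_n):G_n(\beta)]\to\infty$. If $f$ is PCF, then the values $f^n(\gamma_i)$, hence the discriminants $\operatorname{disc}(f^n(x)-\beta)$, are supported on a fixed finite set of primes, so $K_\infty(\beta)/K(\beta)$ is ramified over only finitely many primes; but every finite-index closed subgroup of $\Aut(T_\infty)$ surjects, up to bounded loss, onto $C_2^n$ for all $n$, forcing infinitely many quadratic subextensions ramified at distinct primes — a contradiction. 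If $\beta$ is postcritical, or $f$ has a single (double) finite critical point (equivalently $f$ is conjugate to $x^3+c$), or $f^n(\gamma_1)=f^n(\gamma_2)$ for some $n$, then the preimage tree of $\beta$ carries extra structure preserved by $G_\infty(\beta)$ — a marked ramified branch, a cyclic Kummer constraint along every fibre, or a correlation identifying two subtrees below a common vertex — so that $G_\infty(\beta)$ lies in a proper closed subgroup of $\Aut(T_\infty)$ of infinite index, each verified by a direct wreath-product computation. (In the function field case, non-isotriviality is exactly what keeps the tower from collapsing to a constant field extension, so that these obstructions genuinely bite.)

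For \emph{sufficiency}, $(1)\Rightarrow(2)$, which is the main work, I would first reduce finite index to an arithmetic statement. Using that $\Aut(T_n)$ for the ternary tree is the $n$-fold iterated wreath product of $S_3$, and that eventual stability makes $G_n(\beta)$ transitive on each level for $n$ large, the obstruction to $G_n(\beta)\to G_{n-1}(\beta)$ being surjective with full kernel becomes abelian — governed by the sign characters on the fibres, hence by $\operatorname{disc}(f^n(x)-\beta)$. A chain-rule computation shows that, modulo a bounded set of primes,
\[
\operatorname{disc}\bigl(f^n(x)-\beta\bigr)\ \equiv\ \operatorname{disc}\bigl(f^{n-1}(x)-\beta\bigr)\cdot\bigl(f^n(\gamma_1)-\beta\bigr)\bigl(f^n(\gamma_2)-\beta\bigr)\pmod{(K^\ast)^2},
\]
so, writing $d_n=(f^n(\gamma_1)-\beta)(f^n(\gamma_2)-\beta)$, it suffices for finite index that $(f,\beta)$ be eventually stable and that, for all large $n$, $d_n$ have a \emph{primitive prime divisor}: a prime $\fp$ of $K$ with $\ord_\fp(d_n)$ odd but $\ord_\fp(d_m)$ even for every $m<n$. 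The hypotheses in $(1)$ make $d_n$ a well-defined nonzero element of $K$ (distinct finite critical points, $f^n(\gamma_1)\neq f^n(\gamma_2)$), keep the correction terms bounded ($\beta$ not postcritical), and ensure $h(d_n)\asymp 3^n$ and $h(d_1\cdots d_{n-1})\asymp 3^n$, since $f$ not PCF forces at least one critical orbit to be infinite.

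It then remains to produce these primitive prime divisors for all large $n$, which is the crux. The failure scenario — every odd-order prime of $d_n$ already occurs to odd order in some earlier $d_m$ — forces $f^n(\gamma_1)-\beta$ and $f^n(\gamma_2)-\beta$ to share a controlling part of their factorizations, both with each other and with the earlier terms of the two critical orbits. I would rule this out using the $abc$ conjecture for $K$, to bound $\rad(d_n)\gg h(d_n)^{1-\varepsilon}\asymp 3^{n(1-\varepsilon)}$ from below, together with Vojta's conjecture for a suitable blowup of $\P^1\times\P^1$: the points $\bigl(f^n(\gamma_1),f^n(\gamma_2)\bigr)$ trace an orbit in $\P^1\times\P^1$, a common prime divisor of $f^n(\gamma_1)-\beta$ and $f^n(\gamma_2)-\beta$ records proximity of this orbit to the exceptional divisor over $(\beta,\beta)$ (one also blows up at $(\infty,\infty)$), and Vojta's inequality bounds the total such proximity by $o(h)$. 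Balancing these estimates against the geometric growth of the heights of the earlier terms then isolates a prime dividing $d_n$ to odd order that cannot have occurred below level $n$. In the function field case the same scheme runs unconditionally, with the Mason--Stothers theorem replacing $abc$ and the function field form of Vojta's inequality in force, non-isotriviality guaranteeing the heights are genuinely unbounded. The principal obstacle is exactly this last step — engineering the blowup and its boundary divisor so that Vojta's conjecture applies and the resulting estimate is sharp enough to defeat the ``all old primes'' possibility — and it is the presence of \emph{two} critical points for a cubic that pushes the problem out of the one-dimensional $abc$ regime (which suffices for unicritical quadratics) into genuinely two-dimensional Diophantine geometry.
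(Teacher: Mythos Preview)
Your necessity argument is in the right spirit and tracks the paper's Proposition~\ref{obstructions} closely. The problem is in the sufficiency direction: your reduction to a primitive-prime-divisor criterion for the discriminant $d_n=(f^n(\gamma_1)-\beta)(f^n(\gamma_2)-\beta)$ does not work in degree $3$. Your claim that ``the obstruction to $G_n(\beta)\to G_{n-1}(\beta)$ being surjective with full kernel becomes abelian --- governed by the sign characters'' is the degree-$2$ mechanism (Stoll's lemma) and is false for cubics. Concretely, set $m=3^{n-1}$ and take
\[
H=\{(g_1,\dots,g_m)\in S_3^{\,m}:\operatorname{sgn}(g_1)=\cdots=\operatorname{sgn}(g_m)\}.
\]
This $H$ is invariant under any permutation of the factors (hence under the $G_{n-1}(\beta)$-action), projects onto each $S_3$, has index $2^{m-1}$ in $S_3^{\,m}$, and since $m$ is odd the \emph{product} sign map $H\to C_2$ is surjective. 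So even with transitivity and irreducibility of each $f(x)-\alpha$ over $K_{n-1}(\beta)$, a primitive odd-order prime in $d_n$ cannot rule out $\Gal(K_n(\beta)/K_{n-1}(\beta))=H$, and the index would still blow up. In short, one sign character controls the quadratic kernel $C_2^{\,2^{n-1}}$, but it does not control $S_3^{\,3^{n-1}}$.

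The paper circumvents this by proving something strictly finer than a discriminant statement: for each $\alpha$ at level $n-1$ it produces a prime $\fp$ satisfying ``Condition~R'' --- $v_\fp(f^n(a)-\alpha)=1$, $v_\fp(f^m(a)-\alpha)=0$ for $m<n$, and $v_\fp(f^m(-a)-\alpha)=0$ for all $m\le n$ --- so that $\fp$ is unramified in $K_{n-1}(\beta)$ but ramifies with index $2$ in exactly one $S_3$ fibre (Proposition~\ref{main Galois}). The inertia groups at these primes, moved around by the transitive $G_{n-1}(\beta)$-action, then generate the full $S_3^{\,3^{n-1}}$ (Proposition~\ref{final Galois}). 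Manufacturing such primes requires separately bounding (i) primes dividing some earlier $f^m(a)-\alpha$, via the Roth--$abc$ estimate and a telescoping height bound, and (ii) primes dividing \emph{any} $f^m(-a)-\alpha'$, which is exactly the dynamical-$\gcd$ problem. Over number fields this uses Huang's Vojta-conditional bound together with Xie's dynamical Mordell--Lang for $\A^2$, Medvedev--Scanlon, and Ritt to dispose of non-generic orbits; over function fields the paper does \emph{not} run a Vojta/Mason--Stothers argument but instead appeals to the dynamical Andr\'e--Oort theorem of Ghioca--Ye and Favre--Gauthier with Call--Silverman specialization, obtaining a finiteness statement (Proposition~\ref{FunctionFinite}) rather than a height inequality. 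Your outline would need to be rebuilt around this per-fibre ramification control; the single-discriminant shortcut is not available once $d\ge 3$.
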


It is fairly straightforward to show that the conditions in Theorem~\ref{thm: main} are necessary. 
The proof of their necessity (Proposition~\ref{obstructions}) 
is unconditional for both number fields and function fields,  
and much of the argument does not 
depend on the degree of $f$ or whether $f$ is a polynomial. 
Thus we view the failure of each condition as a natural 
obstruction to a finite index result. The majority of our paper is dedicated 
to the much harder problem of showing that, in the case of cubic polynomials, these conditions 
are also sufficient -- that is, that these are the only obstructions
to finite index. It may seem that the eventual stability condition is very strong, 
but eventual stability is known for many explicit families of $f$ and $\beta$, and is conjectured to 
hold whenever $f$ is not isotrivial and $\beta$ is not periodic for $f$ (see~\cite{RafeAlonEventualStability}).

A function field $K$ of transcendence degree 1 over a field $k$ here
means, as usual, a finitely generated field extension of $k$ of
transcendence degree 1 such that $k$ is algebraically closed in $K$.
For our theorem, we may as well assume that $K$ is a function field of
transcendence degree 1 over $\Qbar$ since $\Gal(K_n(\beta) \cdot
\Qbar /K(\beta) \cdot \Qbar)$ is always a subgroup of
$\Gal(K_n(\beta)/K(\beta))$.  

We also note that in the function field case, in our proof of Theorem
\ref{thm: main}, it does not seem to be necessary that $K$ have
transcendence degree 1 over an algebraic extension of $\Q$.  The only place where that
assumption is used is in Proposition \ref{FunctionFinite} and that
result extends to function fields of transcendence dimension 1 over
any field of characteristic 0 (see Remark \ref{constant-field}).
However, since no proof of Proposition \ref{FunctionFinite} in this
more general setting exists in the literature, we state our result
only for function fields whose field of constants is an algebraic
extension of $\bQ$.

The main idea of our proof is to produce primes with specified ramification behavior 
in the tower of extensions $K_n(\beta)$ and translate this into information about the Galois groups $G_n(\beta)$. 
The basic strategy can be seen as an extension of our previous work on a 
Zsigmondy principle for ramification~\cite{BridyTucker} (see also
\cite{BIJJ, GNT, GNT2}), which 
uses the Call-Silverman canonical height for dynamical systems~\cite{CallSilverman} and 
the ``Roth-$abc$'' estimate as in~\cite{GranvilleSquarefrees} to show 
that a new prime ramifies in each $K_n(\beta)$ for large $n$. To extend our argument, 
we need stronger restrictions on ramification 
derived from sharper height estimates. In the number field case, 
these estimates come from work of Huang \cite{Keping} on the ``dynamical gcd" problem conditional on 
Vojta's conjecture, Xie's \cite{XieDMLA2} proof of 
the dynamical Mordell-Lang conjecture for $\A^2(\Qbar)$, the Medvedev-Scanlon \cite{MedvedevScanlon}
classification of varieties invariant under split polynomial mappings, and 
old work of Ritt \cite{Ritt1} on polynomials that commute under composition. 
In the function field case, we derive the required estimates from 
a dynamical Andr\'e-Oort theorem of Ghioca-Ye~\cite{GY} and
Favre-Gauthier \cite{Gauthier}, together with the 
Call-Silverman machinery of canonical height and specialization.

We also use our approach to solve two other finite index problems that do not seem to have 
been considered before. In Section~\ref{The stunted tree}, we replace the infinite $d$-ary tree $T_\infty$ 
with the actual tree of preimages of $\beta$ under $f$, which we call 
the ``stunted tree" $T^s_\infty(\beta)$. The idea is 
that in order to identify $T_\infty$ with a tree of preimages of $f$, 
we need to count solutions to $f^n(x)=\beta$ with 
multiplicity, but if $\beta$ is periodic or postcritical for $f$, 
this destroys any hope of a finite index result in $\Aut(T_\infty)$ 
for a somewhat coarse reason. We show that passing to the stunted tree 
(which is a quotient tree of $T_\infty$) recovers 
a finite index result in many cases. With a modified notion of eventual stability, 
we prove Theorem~\ref{finite index in stunted tree}, 
which gives a list of necessary and sufficient conditions for 
finite index in $\Aut(T^s_\infty(\beta))$ (conditional on $abc$ and
the Vojta conjecture in the case of number fields) 
that holds for all cubic polynomials except one exceptional family: polynomials 
that commute with a nontrivial M\"{o}bius transformation, which are well known 
to be problematic in this area. Note 
that this exceptional family does not need to be excluded in Theorem~\ref{thm: main}. In Section~\ref{The multitree}, 
we simultaneously consider multiple preimage trees and prove 
Theorem~\ref{multitree theorem}, a similar result in this context.

In the next section of the paper, we provide some background and
definitions.  Following that, in Section \ref{red ob}, we prove one
direction of Theorem \ref{thm: main} and perform some reductions.
Next, we discuss some of the issues around the notion of eventual
stability in Section \ref{stable}.  Then, in Section \ref{heights}, we
summarize the height inequalities that form the backbone of the main
arguments used in this paper; the inequalities here are similar those
used in \cite{BridyTucker, GNT, GNT2}.  Section \ref{Galois section}
contains some arguments from Galois theory, the most important of
which (Proposition \ref{final Galois}) shows that the disjointness of
certain field extensions can be derived from appropriate ramification
data.  In the next two sections, Sections \ref{number-case} and
\ref{function-case}, we prove the ``dynamical gcd'' results that allow
us to deal with the fact that our polynomial has two distinct critical
points; these results are what allow us to go beyond the previously
studied case of unicritical polynomials.  Section \ref{main proof}
contains the proof of Theorem \ref{thm: main}; we first prove the
existence of primes with certain ramification properties, in
Proposition \ref{final prop}, and then apply Proposition \ref{final
  Galois}.  As described above, Sections \ref{The stunted tree} and
\ref{The multitree} contain modifications of the tree of inverse
images which allow for more general finite index results. Finally, in
Section \ref{iso case}, we treat the case of isotrivial polynomials
over function fields.

Many of the techniques used in this paper should generalize to other
situations.  For example, it should certainly be possible to prove a
more general finite index theorem for all non-PCF cubic polynomials,
assuming eventual stability over function fields and assuming eventual
stability, Vojta's conjecture for blow-ups of $\bP^1 \times \bP^1$ and
the $abc$-conjecture for number fields (here, by ``finite index'', we
mean finite index in a natural group that may be smaller than
$\Aut(T_\infty)$).   Higher degree polynomials present additional
complications, since our arguments here use the fact that any
transitive subgroup of $S_3$ containing a transposition must be all of
$S_3$.  On the the other hand, it should be possible to use the
techniques here to treat iterated Galois groups of polynomials of
prime degree, since any subgroup of $S_p$ that contains a transposition and
$p$-cycle must be all of $S_p$.  Some new results along the lines of
\cite{Gauthier, GY} would be necessary to make the proof unconditional
(assuming eventual stability) over function fields.  Over number
fields, the required dynamical gcd results are still implied by Vojta's
conjecture for blow-ups of $\bP^1 \times \bP^1$, by work of \cite{Keping}.  Recently, Juul,
Krieger, Looper, Manes, Thompson, and Walton independently discovered
a similar proof of the number field case of Theorem \ref{thm: main}; again,
assuming $abc$ and Vojta's conjecture.  Their argument also uses
Huang's \cite{Keping} result on Vojta's conjecture to control
dynamical gcds along with methods similar to those of
\cite{BridyTucker, GNT} to obtain primitive prime divisors in the
orbits of one of the critical points.  They are working on extensions
to some higher degree cases.

Finally, note the similarity of Theorem~\ref{thm: main} to Conjecture 3.11 
in~\cite{RafeArborealSurvey} for degree $2$ rational functions. The main 
difference is that a degree $2$ rational function $f$ may commute with an 
nontrivial M\"obius transformation $\sigma$ 
such that $\sigma(\beta)=\beta$ without forcing $\beta$ to be postcritical for $f$ 
or $(f,\beta)$ to not be eventually stable. See~\cite{RafeArborealSurvey} for details.

\noindent {\bf Acknowledgements.}  We would like to thank Thomas
Gauthier, Dragos Ghioca, Keping Huang, Rafe Jones, Nicole Looper, and
Khoa Nguyen for many helpful conversations.  

\section{Background}\label{Background}

We give a brief introduction to arboreal Galois theory and 
describe some of the previous results in the area. For a comprehensive survey, see~\cite{RafeArborealSurvey}. 
Since we will be making repeated use of wreath products, we clarify our notation: if 
$G,H$ are groups and $H$ acts on a set $X$, then 
\[
G\wr_X H = G^{X} \rtimes H,
\] 
where $G^{X}$ is the direct product of $|X|$ copies of $G$ 
and $H$ acts on $G^{X}$ by permuting coordinates.

Let $d\geq 2$ be a fixed integer. Let $T_n$ denote the complete $d$-ary rooted tree of level $n$, 
where each non-leaf vertex has precisely $d$ child vertices. Let $T_\infty$ denote the complete infinite 
$d$-ary rooted tree, which is the direct limit of all $T_n$ as $n\to\infty$. 

\begin{figure}[ht]
\includegraphics[scale=.75]{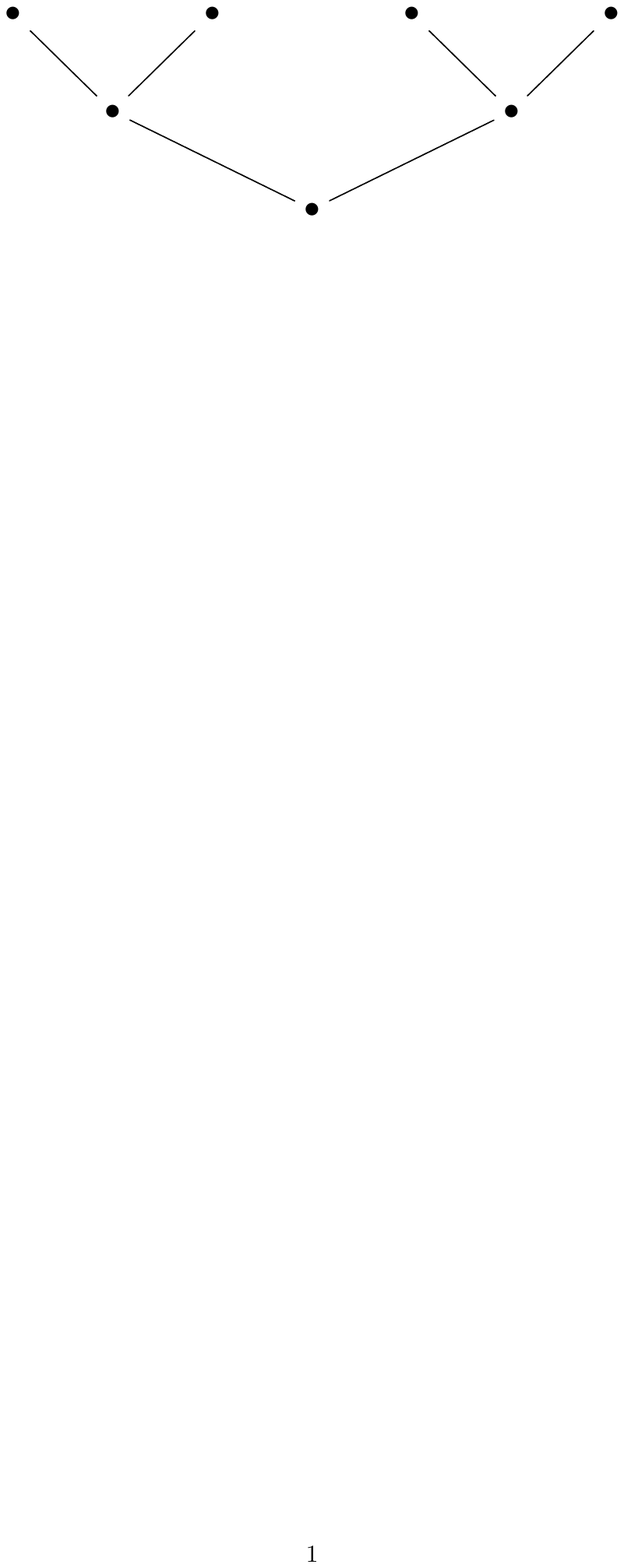}
\caption{$T_2$ for $d=2$}
\label{Binary T_2}
\end{figure}

Suppose $f\in K(x)$ with $\deg f=d$ and $\beta\in \P^1(\Kbar)$.
The morphism $f:\P^1(\Kbar)\to\P^1(\Kbar)$ is 
$d$-to-$1$ away from its critical points, so the set $f^{-n}(\beta)$ generically consists of $d^n$ points. Let
\[
T_n(\beta) = \bigsqcup_{i=0}^n f^{-i}(\beta).
\]
For a generic choice of $\beta$, one sees that $T_n(\beta)$ and $T_n$
are isomorphic.  The root is $\beta$, the vertices at the $i$th level
are labeled by the elements of $f^{-i}(\beta)$, and the edges connect
vertices labeled by $z$ and $f(z)$ for all $z\in f^{-i}(\beta)$ with
$1\leq i\leq n$.  However, if $\gamma\in f^{-m}(\beta)$ is a critical
point of $f$, then $f(\gamma)$ has fewer than $d$ preimages and
$|f^{-n}(\beta)|<d^n$ for $n\geq m$. In this case, we can repair the
construction as follows: we take $f^{-n}(\beta)$ to be the multiset of
solutions to $f^n(x)=\beta$, each counted with multiplicity.  Then
$|f^{-n}(\beta)|=d^n$ for all $n$, and once again we have
$T_n(\beta)\cong T_n$; see Figure~\ref{T_2 examples} for examples.
Note that it may be possible that some $z\in\P^1(\Kbar)$ appears in
$f^{-n}(\beta)$ for two different values of $n$, which happens when
$\beta$ is periodic. This potential repetition is why $T_n(\beta)$ is defined as a disjoint union, 
as taking a non-disjoint union would make $T_n(\beta)$ and $T_n$ 
non-isomorphic as trees in the case of periodic $\beta$. 
In Section \ref{The stunted tree} we define and study a ``stunted tree" 
that does not keep track of repeated points in the tree due to periodic or 
postcritical $\beta$, with somewhat different results. Let
\[
T_\infty(\beta) =\bigsqcup_{i=0}^\infty f^{-i}(\beta)
\]
be the infinite $d$-ary rooted tree formed by taking the direct limit 
of all the $T_n(\beta)$. With our convention on counting with multiplicity, $T_\infty(\beta)$ and $T_\infty$ 
are isomorphic as rooted trees for every $\beta\in \P^1(\Kbar)$.

\begin{figure}[ht]
\centering
\subfloat{\includegraphics[scale=.65]{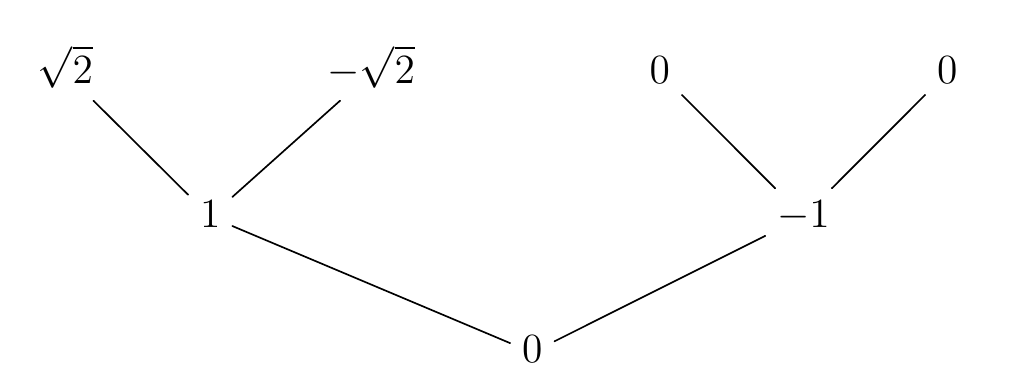}}
\subfloat{\includegraphics[scale=.65]{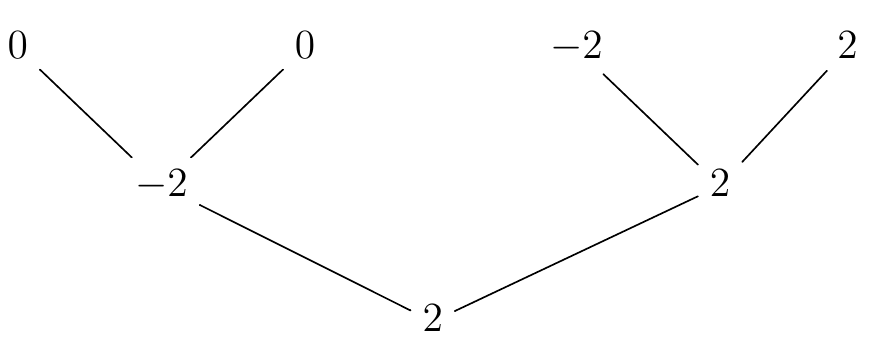}}
\caption{$T_2(0)$ for $x^2-1$ and $T_2(2)$ for $x^2-2$}
\label{T_2 examples}
\end{figure}

As in the introduction, for $n\in\mathbb{N}\cup\{\infty\}$, set $K_n(\beta)=K(f^{-n}(\beta))$ and 
$G_n(\beta)=\Gal(K_n(\beta)/K(\beta))$. 
The fields $K_n(\beta)$ are obviously Galois extensions of $K(\beta)$.  
 Note that $G_\infty(\beta)$ is isomorphic to the inverse 
limit of the groups $G_n(\beta)$ with the natural projection maps $G_{n+1}(\beta)\to G_n(\beta)$. 

The starting point of arboreal Galois theory is the observation that, 
for all $n\geq 1$, the group $G_n(\beta)$
acts faithfully by automorphisms on $T_n(\beta)$.  Edges in
$T_n(\beta)$ are determined by the action of $f$ on
$T_n(\beta)\setminus \beta$, and the action of $G_n(\beta)$ on
$T_n(\beta)$ commutes with the action of $f$ because $f\in K(x)$. Thus
there is an injection
\[G_n(\beta)\hookrightarrow \Aut(T_n(\beta))\cong\Aut(T_n),\]
where $\Aut(T_n)$ is the automorphism group of $T_n$ as a rooted tree. It is also clear that  
$\Gal(K_n(\beta)/K_{n-1}(\beta))$ injects into $\Aut(T_n/T_{n-1})$, the subgroup of $\Aut(T_n)$ that fixes 
the $(n-1)$st level of the tree.

Certainly, $\Aut(T_1)\cong S_d$. It is easy to show inductively that
\[
\Aut(T_n) \cong S_d\wr_{T_{n-1}\setminus T_{n-2}} \Aut(T_{n-1}),
\]
from which follows the order formula 
\[
|\Aut(T_n)| = (d!)^{d^{n-1}}|\Aut(T_{n-1})|= (d!)^{d^{n-1}+d^{n-2}+\dots + 1}=(d!)^{\frac{d^n-1}{d-1}}.
\]
Also observe that $\Aut(T_n/T_{n-1})\cong (S_d)^{|T_{n-1}\setminus T_{n-2}|}$, so 
\[
|\Aut(T_n/T_{n-1})|=(d!)^{d^{n-1}}.
\]
The group $\Aut(T_n)$ is often described as an ``iterated wreath product''
\[
\Aut(T_n)\cong S_d\wr S_d\wr S_d\wr\dots\wr S_d,
\]
though this is somewhat misleading, as every instance of the symbol $\wr$ in the above line 
implicitly refers to a group action on a distinct set.

Taking inverse limits, there is an injection $G_\infty(\beta)\hookrightarrow\Aut(T_\infty)$. Equivalently, 
we have the \emph{arboreal Galois representation} 
\[
\rho_{f,\beta}:\Gal(\Kbar/K)\to \Aut(T_\infty),
\]
where $\rho_{f,\beta}$ sends an element of $\Gal(\Kbar/K)$ to the induced automorphism of $T_\infty$. Then $G_\infty(\beta)$ is the image of $\rho_{f,\beta}$ inside $\Aut(T_\infty)$. 
The main problem in the field is to determine the ``size" of $G_\infty(\beta)$, for 
example as in the following motivating question.
\begin{question}\label{main question}
Let $K$, $f$, and $\beta$ be as above.
\begin{enumerate}
\item When is $[\Aut(T_\infty):G_\infty(\beta)]=1$?
\item When is $[\Aut(T_\infty):G_\infty(\beta)]<\infty$?
\end{enumerate}
\end{question}

The first part of Question~\ref{main question} was originally studied by 
Odoni~\cite{OdoniIterates, OdoniWreathProducts}, who showed 
that $[\Aut(T_\infty):G_\infty(\beta)]=1$ (i.e. $\rho_{f,\beta}$ is surjective) 
when $f$ is a polynomial with generic (transcendental) coefficients. 
More recently, Juul showed that $G_\infty(\beta)=\Aut(T_\infty)$ for generic rational functions~\cite{Juul}. 
Despite the work of Odoni and Juul, it is hard to show that $G_\infty(\beta)=\Aut(T_\infty)$ 
for any particular choice of $f$ and $\beta$. Still unresolved is Odoni's conjecture~\cite[Conjecture 7.5]{OdoniIterates} 
that for every $d\geq 2$, 
there exists a degree $d$ polynomial with $G_\infty(\beta)=\Aut(T_\infty)$ 
(though recent work of Looper~\cite{Looper} gives such a polynomial in every prime degree). 

The second part of Question~\ref{main question} is perhaps more natural, because the 
answer is invariant under replacing $K$ by a finite extension (see Proposition~\ref{reductions}). 
The arboreal finite index problem 
is a natural analog in arithmetic dynamics of the finite index problem for the 
$\ell$-adic Galois representations associated to elliptic curves, solved by 
Serre's Open Image Theorem~\cite{Serre}. Following Serre's theorem,
some obstructions to 
finite index can be thought of as dynamical analogs of complex multiplication.

We comment briefly on the general strategy for showing that $G_\infty(\beta)$ has finite or infinite 
index in $\Aut(T_\infty)$. Consider the natural projection map $\phi_n:\Aut(T_\infty)\to T_n$. 
The restriction of $\phi_n$ to $G_\infty(\beta)$ maps onto $G_n(\beta)$. By 
elementary group theory, $[\Aut(T_\infty):G_\infty(\beta)]\geq [\Aut(T_n):G_n(\beta)]$. 
So if $[\Aut(T_n):G_n(\beta)]\to\infty$ as $n\to\infty$, or equivalently if 
\[
[\Aut(T_n/T_{n-1}): \Gal(K_n(\beta)/K_{n-1}(\beta))]>1
\]
for infinitely many $n$, 
then $[\Aut(T_\infty):G_\infty(\beta)]=\infty$. 
On the other hand, the profinite structure of $G_\infty(\beta)$ implies that 
distinct cosets of $G_\infty(\beta)$ in $T_\infty$ 
must map under $\phi_n$ to distinct cosets of $G_n(\beta)$ in $T_n$ for some $n$. 
So if $[\Aut(T_n):G_n(\beta)]$ remains bounded as $n\to\infty$, or equivalently if 
\[
\Gal(K_n(\beta)/K_{n-1}(\beta))=\Aut(T_n/T_{n-1})\cong (S_d)^{d^{n-1}}
\]
for all large $n$, then $[\Aut(T_\infty):G_\infty(\beta)]<\infty$.

For more on arboreal Galois representations and related topics, see for  example~\cite{Cremona,Stoll,BostonJonesArboreal,BostonJonesImage,JonesComp,JonesDensity,
JonesFixedPointFree,JonesManes,
Hindes1,Hindes2,HamblenJonesMadhu,Krieger,IngramUniformization,
IngramSilverman,BIJJ,PinkFinitenessLiftability,
PinkQuadraticInfiniteOrbits,PinkQuadratic}. Most of the previous work is for polynomials 
and rational functions of degree 2. The following are typical theorems.
\begin{thm}\cite{JonesDensity}
Let $K=\Q$. Let $f\in \Z[x]$ be monic with $\deg f=2$. Suppose that $f$ is not PCF, $(f,0)$ 
is stable, and $0$ is strictly preperiodic for $f$. Then $[\Aut(T_\infty):G_\infty(0)]<\infty$.
\end{thm}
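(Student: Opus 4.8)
The plan is to follow the strategy that the present paper generalizes: reduce the finite‑index statement to the appearance of a suitably ramified new prime at every level of the tower $K_n(\beta)$, and then produce such primes in the quadratic case. Since $[\Aut(T_\infty):G_\infty(\beta)]$ is unchanged by replacing $K$ by a finite extension and by conjugating $f$ by a degree‑one polynomial (Proposition~\ref{reductions} and the remarks preceding it), I would first normalize to $f(x)=x^2+c$ with $c\in\Q$, with unique finite critical point $0$ and root $\beta\in\Q$ (minor powers of $2$ in denominators are harmless, since only odd primes will be used). By the general strategy recalled in the introduction it suffices to find $N$ such that $\Gal(K_n(\beta)/K_{n-1}(\beta))=\Aut(T_n/T_{n-1})\cong(\Z/2)^{2^{n-1}}$ for all $n\ge N$, since this forces $[\Aut(T_\infty):G_\infty(\beta)]$ to divide the finite number $[\Aut(T_N):G_N(\beta)]$. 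As $f(y)=\alpha$ is solved by $y=\pm\sqrt{\alpha-c}$, the field $K_n(\beta)$ is obtained from $K_{n-1}(\beta)$ by adjoining $\sqrt{\alpha-c}$ for $\alpha$ running over the $2^{n-1}$ points of $f^{-(n-1)}(\beta)$, and $\prod_\alpha(\alpha-c)=\pm\bigl(f^{n-1}(c)-\beta\bigr)=\pm\bigl(f^n(0)-\beta\bigr)$ is, up to sign, the shifted critical‑orbit value; write $a_n:=f^n(0)-\beta$. Because $\beta$ is preperiodic and $f$ is not PCF, $\beta$ is not postcritical, so $a_n\neq 0$ for all $n$, and non‑PCF‑ness forces $\log|a_n|$ to grow like $2^n$ (by Northcott and the recursion $f^{n+1}(0)=(f^n(0))^2+c$).

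Second, the Galois‑theoretic step. Here I would use the discriminant criterion of Stoll and Jones. A direct computation (the critical points of $f^n$ are exactly the preimages of $0$ under $f$ up to level $n-1$, all simple) gives $\operatorname{disc}\bigl(f^n(x)-\beta\bigr)=\pm 2^{n2^n}\prod_{k=1}^{n} a_k^{2^{n-k}}$, which is $\pm a_n$ modulo squares. Thus it suffices to show: for all large $n$ the quantity $a_n$ has an odd prime divisor $p$ of odd multiplicity with $p\nmid 2\,a_1\cdots a_{n-1}$; indeed one can arrange exact multiplicity one. Such a $p$ is unramified in $K_{n-1}(\beta)/\Q$ — that field ramifies only at primes dividing $2\prod_{k\le n-1}a_k$ — so for a prime $\mathfrak P$ above $p$ the tame inertia in $K_n(\beta)/K_{n-1}(\beta)$ is the vector $\bigl(\overline{v_{\mathfrak P}(\alpha-c)}\bigr)_\alpha\in(\Z/2)^{2^{n-1}}$, whose coordinates sum to $v_p(a_n)=1$, hence a single standard basis vector. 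Since $(f,\beta)$ is stable, $G_{n-1}(\beta)$ acts transitively on $f^{-(n-1)}(\beta)$, so the $G_{n-1}(\beta)$‑conjugates of this inertia element — themselves inertia elements of $\Gal(K_n(\beta)/K_{n-1}(\beta))$ — run over all standard basis vectors and generate $(\Z/2)^{2^{n-1}}$, giving the desired equality. (This is essentially Proposition~\ref{final Galois} applied at level $n$.)

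Third, the production of primes. Because $\beta$ is \emph{strictly} preperiodic, $f^j(\beta)-\beta$ is a nonzero element of $\Q$ for every $j\ge 1$ and takes only finitely many values as $j$ varies; reducing the orbit modulo $a_m$ shows $\gcd(a_n,a_m)\mid f^{\,n-m}(\beta)-\beta$, so the primes occurring in both $a_n$ and some earlier $a_m$ lie in a fixed finite set, and a $p$‑adic‑dynamics argument (for each prime $p$ the sequence $v_p(a_n)$ is eventually periodic in $n$, hence bounded) shows their total contribution to $a_n$ is bounded independently of $n$. Since $|a_n|\to\infty$ doubly‑exponentially, $a_n$ therefore acquires, for all large $n$, prime factors not dividing $a_1\cdots a_{n-1}$; this is the Zsigmondy / primitive‑prime‑divisor statement for quadratic orbits over $\Q$, which is unconditional (no $abc$ is needed). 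It remains to arrange one such primitive prime to occur to multiplicity exactly one, equivalently to show that $a_n$ is not ``essentially squareful'' — a bounded factor times a squareful integer — for large $n$.

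The main obstacle is precisely this last point: a bare size comparison yields a primitive prime divisor but gives no control of its multiplicity. Ruling out that $a_n=(f^{n-1}(0))^2+(c-\beta)$ is, for infinitely many $n$, a bounded factor times a perfect square (more generally, bounding the squareful part of $a_n$) amounts to a finiteness statement for a family of Pell‑type / affine curves, and is proved via genuine Diophantine input — Siegel‑type finiteness together with the double‑exponential growth of the orbit — rather than elementary estimates; crucially this input is unconditional over $\Q$, which is why the theorem requires no conjectural hypotheses. A secondary technical point is handling the Galois step without assuming that $G_{n-1}(\beta)$ is already all of $\Aut(T_{n-1})$: the inertia argument above needs only transitivity of $G_{n-1}(\beta)$, which stability supplies at every level, so one does not in fact have to propagate maximality up the tree. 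Once these pieces are in place, the three steps combine to give $\Gal(K_n(\beta)/K_{n-1}(\beta))=\Aut(T_n/T_{n-1})$ for all large $n$, hence $[\Aut(T_\infty):G_\infty(\beta)]<\infty$.
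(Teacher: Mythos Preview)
The paper does not prove this theorem at all: it is quoted from \cite{JonesDensity} as motivating background, with no argument given here. So there is no ``paper's own proof'' to compare against.

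That said, your sketch has a genuine gap at exactly the point you flag as the main obstacle. You correctly reduce everything to producing, for all large $n$, a primitive prime divisor of $a_n=f^n(0)-\beta$ of odd (you ask for exact) multiplicity, and you correctly use the strict preperiodicity of $\beta$ to confine non-primitive primes to a finite set $S$ via $\gcd(a_n,a_m)\mid f^{\,n-m}(\beta)-\beta$. But both of your proposed ways to control multiplicity are problematic. First, the claim that ``$v_p(a_n)$ is eventually periodic in $n$, hence bounded'' is circular: eventual periodicity of an integer sequence is equivalent to boundedness here, so you are assuming what you want. There is no general reason the orbit of $0$ cannot approach $\beta$ $p$-adically along a sparse subsequence. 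Second, your ``Siegel-type finiteness'' for the squareful part does not apply: writing $a_n=D y^2$ with bounded $D$ gives $x^2-Dy^2=\beta-c$ with $x=f^{n-1}(0)$, a Pell-type conic of genus $0$, where Siegel's theorem says nothing and integral points are typically infinite.

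Jones's actual argument (in the cited paper) does not go through exact multiplicity one or any Diophantine finiteness. He works directly with the sequence $f^n(0)$ at the unnormalized basepoint $\beta=0$, where the congruence $a_n\equiv a_{n-m}\pmod{a_m}$ makes $(a_n)$ a rigid divisibility sequence, and combines this with Stoll's maximality criterion (that $\Gal(K_n/K_{n-1})$ is full iff a specific product of the $a_i$ is a non-square in $\Q$) rather than the ramification-theoretic Condition~R machinery you borrow from this paper. The strict preperiodicity hypothesis is precisely what makes this elementary square-class bookkeeping go through unconditionally; no Siegel, no $abc$.
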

\begin{thm}\cite{GNT}\label{from GNT}
Let $K=\Q$. Let $f\in \Z[x]$ be monic with $\deg f=2$. Suppose that $f$ is not PCF and that 
$(f,0)$ is stable. Assume the $abc$ conjecture for $\Q$. Then $[\Aut(T_\infty):G_\infty(0)]<\infty$.
\end{thm}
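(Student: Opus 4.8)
The plan is to reduce the statement to a question about the relative Galois groups $\Gal(K_n(0)/K_{n-1}(0))$ and then force those groups to be maximal by exhibiting a prime with controlled ramification. By the discussion preceding Question~\ref{main question}, it suffices to prove that $\Gal(K_n(0)/K_{n-1}(0))=\Aut(T_n/T_{n-1})\cong(\Z/2\Z)^{2^{n-1}}$ for all sufficiently large $n$. Write $\gamma$ for the (unique) critical point of the monic quadratic $f$, so $f(x)=(x-\gamma)^2+f(\gamma)$ with $\gamma\in\Q$; then for $\delta\in f^{-(n-1)}(0)$ the two $f$-preimages of $\delta$ are $\gamma\pm\sqrt{\delta-f(\gamma)}$, so
\[
K_n(0)=K_{n-1}(0)\bigl(\,\sqrt{\delta-f(\gamma)}\ :\ \delta\in f^{-(n-1)}(0)\,\bigr).
\]
Since $(f,0)$ is stable, $f^{n-1}\in\Z[x]$ is irreducible over $\Q$, hence separable, so $f^{-(n-1)}(0)$ consists of $2^{n-1}$ distinct algebraic integers on which $G_{n-1}(0)$ acts transitively. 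By Kummer theory, $\Gal(K_n(0)/K_{n-1}(0))$ equals the full group $(\Z/2\Z)^{f^{-(n-1)}(0)}$ precisely when the elements $\delta-f(\gamma)$ are $\F_2$-linearly independent in $K_{n-1}(0)^\times/\bigl(K_{n-1}(0)^\times\bigr)^2$.

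To force this independence I would use a \emph{primitive prime divisor} of the critical-orbit term $f^n(\gamma)$, meaning a rational prime $p$ with $p\neq 2$, $p\nmid\operatorname{disc}(f)$, $p\nmid f^k(\gamma)$ for $1\le k<n$, and $v_p\bigl(f^n(\gamma)\bigr)$ odd. By standard facts about ramification in towers of iterated preimage fields, the rational primes ramifying in $K_{n-1}(0)$ all divide $2\operatorname{disc}(f)\prod_{k=1}^{n-1}f^k(\gamma)$; hence a primitive prime $p$ is unramified in $K_{n-1}(0)$ and $f^{n-1}$ is separable modulo $p$, so the points $\delta\in f^{-(n-1)}(0)$ are pairwise distinct modulo every prime $\fp$ of $K_{n-1}(0)$ above $p$. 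It follows that at most one $\delta$ is congruent to $f(\gamma)$ modulo a given such $\fp$, and since the $\delta$ are algebraic integers, at most one of the $\delta-f(\gamma)$ has positive $\fp$-valuation. Now $\prod_\delta(\delta-f(\gamma))=\pm f^n(\gamma)$ has odd $p$-adic valuation while $p$ is unramified, so there is a prime $\fp\mid p$ and a single $\delta_0$ with $v_\fp(\delta_0-f(\gamma))$ odd and $v_\fp(\delta-f(\gamma))=0$ for every other $\delta$. Translating $\fp$ by elements of $G_{n-1}(0)$, which acts transitively on $f^{-(n-1)}(0)$ and fixes $f(\gamma)\in\Q$, then produces, for \emph{each} $\delta\in f^{-(n-1)}(0)$, a prime above $p$ at which only $\delta-f(\gamma)$ has positive (hence odd) valuation; evaluating at these primes kills every nontrivial $\F_2$-relation among the $\delta-f(\gamma)$, so $\Gal(K_n(0)/K_{n-1}(0))$ is the full group.

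It remains to show that $f^n(\gamma)$ has a primitive prime divisor for all large $n$, which is the heart of the argument and the place where the $abc$ conjecture enters. Since $f$ is not PCF the critical point $\gamma$ is not preperiodic, so by Call--Silverman~\cite{CallSilverman} its canonical height $\widehat h(\gamma)$ is positive and $h\bigl(f^n(\gamma)\bigr)=2^n\widehat h(\gamma)+O(1)$. The $abc$ conjecture for $\Q$, in the ``Roth-$abc$'' form of Granville~\cite{GranvilleSquarefrees}, bounds the non-squarefree part of $f^n(\gamma)$ by $o(2^n)$, so the product of the primes dividing $f^n(\gamma)$ to odd order has logarithm at least $(1-\epsilon)2^n\widehat h(\gamma)$ for $n\gg 0$; one then has to bound the combined contribution of the ``old'' primes---those dividing $2\operatorname{disc}(f)$ or a previous orbit term $f^k(\gamma)$---and show it has strictly smaller order than $2^n\widehat h(\gamma)$, leaving a primitive prime. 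I expect this last estimate to be the real obstacle: the naive bound $\sum_{k<n}h\bigl(f^k(\gamma)\bigr)\approx 2^n\widehat h(\gamma)$ is of the same order as the main term, so one cannot merely discard the earlier orbit terms; instead one must exploit the dynamics---in particular the fact that a prime divides both $f^k(\gamma)$ and $f^n(\gamma)$ only through a divisibility among the orbit terms of $0$---together with the $abc$-estimate, to control the overlap. By contrast, the reduction in the first paragraph and the Kummer- and ramification-theoretic bookkeeping in the second are comparatively routine.
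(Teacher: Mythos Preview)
This theorem is quoted from \cite{GNT} and is not proved in the present paper, so there is no in-paper proof to compare against. That said, your outline is precisely the strategy of \cite{GNT}, and it is the quadratic analogue of what this paper carries out for cubics in Sections~\ref{heights}, \ref{Galois section}, and~\ref{main proof}. The Kummer and ramification bookkeeping in your first two paragraphs is correct and amounts to the $d=2$ version of Proposition~\ref{main Galois}; the discriminant identity $\operatorname{disc}(f^{n-1})=\pm 2^{(n-1)2^{n-1}}\prod_{j=1}^{n-1}f^j(\gamma)^{2^{n-1-j}}$ confirms your claim about which rational primes can ramify in $K_{n-1}(0)$.

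The gap you flag in your third paragraph is real but resolves cleanly, by exactly the mechanism of Lemmas~\ref{B-delta} and~\ref{from-5.1} (specialised to $d=2$, $\gamma$ the critical point, $\beta_1=\beta_2=0$). Split the old primes according to how recently they first appear. For $m\le n-B_\delta$ one bounds crudely:
\[
\sum_{m\le n-B_\delta}\ \sum_{p\mid f^m(\gamma)}\log p\ \le\ \sum_{m\le n-B_\delta}h\bigl(f^m(\gamma)\bigr),
\]
and the right side is at most $\delta\cdot 2^n h_f(\gamma)$ once $B_\delta$ is chosen with $2^{-B_\delta}<\delta/4$ and $n$ is large (this is Lemma~\ref{B-delta}). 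For $n-B_\delta<m<n$, your own observation that $p\mid\gcd\bigl(f^m(\gamma),f^n(\gamma)\bigr)$ forces $p\mid f^{n-m}(0)$ at primes of good reduction confines such $p$ to the finite set of divisors of $f^j(0)$ for $1\le j<B_\delta$; here one needs $f^j(0)\neq 0$, i.e.\ that $0$ is not periodic, which follows from stability via Proposition~\ref{eventual stability facts}. Thus the total old-prime contribution is at most $\delta\cdot 2^n h_f(\gamma)+O_\delta(1)$, strictly smaller than the main term.

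One further point worth recording: Lemma~\ref{from-Roth} as stated in this paper requires $d\ge 3$, and indeed the Belyi--Granville squarefree estimate applied directly to the degree-$2$ polynomial $f(x)$ is too weak by itself. The remedy (as in \cite{GNT}) is to apply the estimate instead to a fixed higher iterate $f^k(x)$, which has $2^k$ distinct roots because stability forces $0$ to be non-postcritical; writing $f^n(\gamma)=f^k\bigl(f^{n-k}(\gamma)\bigr)$ and taking $k$ large then yields $\rad\bigl(f^n(\gamma)\bigr)\ge |f^n(\gamma)|^{1-\epsilon}$, hence $\sum_{v_p(f^n(\gamma))=1}\log p\ge (1-\epsilon)2^n h_f(\gamma)+O_\epsilon(1)$. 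Combined with the previous paragraph, this produces your primitive prime divisor for all sufficiently large $n$ and completes the argument.
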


Our current paper deals only with the case of cubic polynomials, but
we will treat the case of quadratic polynomials in future work.
It is not difficult to prove analogs of the main theorems of this
paper for quadratics using our techniques here.  With a bit more work,
and an extension of \cite{UnicriticalAndreOort}, we obtain,
in \cite{BGHT}, a stronger variant of Theorem \ref{multitree theorem} that
applies to families of inverses of images of points under several
different quadratic polynomials.  These results are unconditional for
quadratic polynomials over function fields, and require $abc$ and
Vojta's conjecture for number fields.  We note that in \cite{DLT} it
is shown that that if $f$ is a nonisotrivial quadratic polynomial over
a function field and $\beta$ is not periodic, then $(f,\beta)$ is
eventually stable.

\begin{rem}
It should be noted that index is not the only measure of the size of a subgroup relative to $\Aut(T_\infty)$. 
Hausdorff dimension is a refined measurement of the size of an infinite index subgroup  
linked to interesting dynamical and arithmetic properties. 
See~\cite{BostonJonesArboreal,BostonJonesImage,BFHJY}.
\end{rem}

\section{Reductions and Obstructions}\label{red ob} 

In this section we prove some reductions that will be used
in the proof of Theorem~\ref{thm: main}. For many of our arguments, it
will be convenient to make a change of variables so that our cubic
polynomial $f$ is monic and the quadratic term vanishes. This may
require taking a finite extension of the ground field $K$.  In
Proposition~\ref{reductions}, we show that neither of these
modifications affects the answer to the finite index question.

We begin with the following standard lemma from Galois theory, which
we state without proof. 

\begin{lem}\label{degrees}
Let $M_1 \subset M_2$ be a field extension of finite degree and let $M_1
\subseteq M_3$ be any field extension.  Then we have
\begin{equation}\label{any}
[M_2 \cdot M_3: M_3] \leq [M_2 : M_1].
\end{equation}
If $[M_3: M_1]$ is finite, we also have
\begin{equation} \label{finite}
[M_2 \cdot M_3: M_3] \geq \frac{[M_2:M_1]}{[M_3:M_1]}.
\end{equation}
\end{lem}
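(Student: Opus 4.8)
\textbf{Proof proposal for Lemma \ref{degrees}.} The plan is to reduce both inequalities to the multiplicativity of degrees in a tower, after first establishing \eqref{any} by an explicit spanning argument inside a common overfield containing $M_2$ and $M_3$.

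First I would prove \eqref{any}. Fix an $M_1$-basis $e_1,\dots,e_m$ of $M_2$, where $m=[M_2:M_1]$, and consider the $M_3$-subspace $V=\sum_{i=1}^m M_3\, e_i$ of the ambient field. Since each product $e_ie_j$ lies in $M_2$, it is an $M_1$-linear — hence $M_3$-linear — combination of $e_1,\dots,e_m$, so $V$ is closed under multiplication, i.e. $V$ is a ring. A finite-dimensional domain over the field $M_3$ is itself a field, so $V$ is a field containing both $M_2$ and $M_3$; by minimality of the compositum, $V=M_2\cdot M_3$. Therefore $[M_2\cdot M_3:M_3]=\dim_{M_3}V\le m=[M_2:M_1]$, which is \eqref{any}.

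Next, for \eqref{finite}, observe that \eqref{any} already gives
\[
[M_2\cdot M_3:M_1]=[M_2\cdot M_3:M_3]\,[M_3:M_1]\le [M_2:M_1]\,[M_3:M_1]<\infty
\]
once $[M_3:M_1]$ is finite, so every degree appearing below is finite and the tower law applies without caveat. Applying it along the two chains $M_1\subseteq M_3\subseteq M_2\cdot M_3$ and $M_1\subseteq M_2\subseteq M_2\cdot M_3$ gives
\[
[M_2\cdot M_3:M_3]\,[M_3:M_1]=[M_2\cdot M_3:M_1]=[M_2\cdot M_3:M_2]\,[M_2:M_1]\ge [M_2:M_1],
\]
and dividing by the positive integer $[M_3:M_1]$ yields \eqref{finite}.

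There is essentially no hard step here; the only points requiring any care are the verification in the proof of \eqref{any} that a set spanning $M_2$ over $M_1$ still spans $M_2\cdot M_3$ over $M_3$ — equivalently that the space $V$ is closed under multiplication and inversion — and the remark that \eqref{any} must be invoked to guarantee finiteness of $[M_2\cdot M_3:M_1]$ before one applies the tower law in the proof of \eqref{finite}. Since the statement is entirely standard, one could alternatively present this as a brief remark or simply cite a reference.
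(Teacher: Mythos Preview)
Your proof is correct. The paper actually states this lemma without proof, calling it a ``standard lemma from Galois theory,'' so there is no argument in the paper to compare against; your proposal supplies exactly the kind of routine verification one would expect, and your closing remark that one could simply cite a reference matches the paper's own stance.
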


\begin{prop}\label{reductions}
  Let $f\in K(x)$ with $\deg f\geq 2$ and let $\beta\in \P^1(\Kbar)$. Let
  $K'$ be any extension of $K$.  For $n\in\mathbb{N}\cup\{\infty\}$,
  let $K'_n(f,\beta)=K'(f^{-n}(\beta))$ and
  $G'_n(f,\beta)=\Gal(K'_n(f,\beta)/K'(\beta))$.  Let $\sigma\in K(x)$
  have degree one, and let $g=\sigma\circ f\circ\sigma^{-1}$.  Then
  $[\Aut(T_\infty):G_\infty(f,\beta)]$ must be finite whenever
  $[\Aut(T_\infty):G'_\infty(f,\beta)]$ is finite.  Furthermore, if
  $[K':K]$ is finite, then the following are equivalent:
\begin{enumerate}
\item $[\Aut(T_\infty):G_\infty(f,\beta)]<\infty$
\item $[\Aut(T_\infty):G'_\infty(f,\beta))]<\infty$
\item $[\Aut(T_\infty):G_\infty(g,\sigma(\beta))]<\infty$
\end{enumerate}
\end{prop}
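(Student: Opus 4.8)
The plan is to prove the statement in two moves: first handle the change of variables $\sigma$, then handle finite base extensions $K'$, combining them via Lemma~\ref{degrees}. The conjugation part is essentially formal. If $g = \sigma\circ f\circ\sigma^{-1}$, then $\sigma$ induces a bijection $\P^1(\Kbar)\to\P^1(\Kbar)$ carrying $f^{-n}(\beta)$ onto $g^{-n}(\sigma(\beta))$, since $y\in f^{-n}(\beta)$ iff $\sigma(y)\in g^{-n}(\sigma(\beta))$. Because $\sigma\in K(x)$ has its coefficients in $K$, this bijection is Galois-equivariant, so it induces an isomorphism $K_n(f,\beta) = K_n(g,\sigma(\beta))$ of extensions of $K(\beta)=K(\sigma(\beta))$ (the last equality because $\sigma$ is defined over $K$ and degree one, so $K(\beta)=K(\sigma(\beta))$), compatible with the tree structure. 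Hence $G_\infty(f,\beta)\cong G_\infty(g,\sigma(\beta))$ as subgroups of $\Aut(T_\infty)$, giving the equivalence (1)$\iff$(3) outright, with no finiteness hypothesis needed. So the content is really about the relationship between $G_\infty(f,\beta)$ and $G'_\infty(f,\beta)$.

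For the base-change part, I would set $L = K(\beta)$ and $L' = K'(\beta)$, and compare the towers $L_n := K_n(f,\beta)$ and $L'_n := K'_n(f,\beta) = L'\cdot L_n$. The key observation is that $L'_n$ is the compositum $L' \cdot L_n$ inside a common algebraic closure, so $\Gal(L'_n/L')$ is naturally identified with $\Gal(L_n/ L_n\cap L')$, a subgroup of $\Gal(L_n/L) = G_n(f,\beta)$. Passing to the inverse limit, $G'_\infty(f,\beta)$ embeds as a subgroup of $G_\infty(f,\beta)$ inside $\Aut(T_\infty)$; this immediately gives that finiteness of $[\Aut(T_\infty):G'_\infty(f,\beta)]$ forces finiteness of $[\Aut(T_\infty):G_\infty(f,\beta)]$, which is the first assertion. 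For the reverse direction under $[K':K]<\infty$: since $[K':K]$ is finite, $[L':L] \le [K':K]$ is finite, hence $[L_n : L_n\cap L'] \ge [L_n:L]/[L':L]$ by the finite form of Lemma~\ref{degrees} (taking $M_1 = L$, $M_2 = L_n$, $M_3 = L'$). Thus $[G_n(f,\beta) : G'_n(f,\beta)] \le [L':L] \le [K':K]$ for every $n$. This uniform bound is the crucial point: it says $G'_\infty(f,\beta)$ has index at most $[K':K]$ in $G_\infty(f,\beta)$ at every finite level, and by the profinite argument sketched in the Background section (distinct cosets separate at some finite level) this passes to the limit, giving $[G_\infty(f,\beta):G'_\infty(f,\beta)] \le [K':K] < \infty$. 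Combining, $[\Aut(T_\infty):G'_\infty(f,\beta)] = [\Aut(T_\infty):G_\infty(f,\beta)]\cdot[G_\infty(f,\beta):G'_\infty(f,\beta)]$, so one index is finite iff the other is, proving (1)$\iff$(2).

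The main obstacle — though a mild one — is the passage to the inverse limit: knowing $[G_n : G'_n]\le [K':K]$ for all $n$ does not formally imply $[G_\infty : G'_\infty]<\infty$ without an argument, since a priori an infinite-index closed subgroup of a profinite group could have all finite-level projections of bounded index only if the bounds were not uniform. The uniform bound is what saves us: a standard compactness/profinite argument shows that if $H \le G$ are closed subgroups of a profinite group $G = \varprojlim G_n$ and $[\,\mathrm{im}(G\to G_n) : \mathrm{im}(H\to G_n)\,]\le C$ for all $n$, then $[G:H]\le C$. I would either cite this or include the one-line proof: pick coset representatives at a level $n$ large enough that they remain distinct (possible by the profinite structure since $G/H$ is finite or, if not yet known finite, one derives a contradiction with the bound $C$); more carefully, $G/H$ as a profinite space is the inverse limit of the finite sets $\mathrm{im}(G\to G_n)/\mathrm{im}(H\to G_n)$, each of size $\le C$, so $|G/H|\le C$. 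Everything else is bookkeeping with Lemma~\ref{degrees} and the compatibility of the tree identifications with field composita, which is routine since all the maps in sight are defined over $K(\beta)$.
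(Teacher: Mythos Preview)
Your proof is correct and follows essentially the same approach as the paper: both use Lemma~\ref{degrees} to bound $|G'_n(f,\beta)|$ above and below by $|G_n(f,\beta)|$ and $|G_n(f,\beta)|/[K':K]$ respectively, and both use that $\sigma$ carries $f^{-n}(\beta)$ bijectively to $g^{-n}(\sigma(\beta))$. Your treatment of the conjugation is slightly more direct---you observe that $K_n(f,\beta)=K_n(g,\sigma(\beta))$ outright since $\sigma\in K(x)$, whereas the paper routes through the compositum with $K'$ to prove (2)$\iff$(3)---and you are more explicit about the profinite passage, but these are cosmetic differences.
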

\begin{proof}
  We have $|G'_n(f,\beta)| \leq |G_n(f,\beta)|$ by \eqref{any}, so,
  for any extension $K'$ of $K$, we see that
  %$[\Aut(T_\infty):G'_\infty(f,\beta))]<\infty$ must imply
  %$[\Aut(T_\infty):G_\infty(f,\beta)]<\infty$.  
  (2) implies (1). When $[K':K]$ is finite, we have
\[ |G'_n(f,\beta)| \geq \frac{|G_n(f, \beta)|}{[K':K]} \] by \eqref{finite}, 
and so (1) implies (2) as well.  Now, $[K':K(\beta)]$ is finite so by the same
reasoning, we see that
$[\Aut(T_\infty) : \Gal(K_\infty(g, \sigma(\beta)) \cdot K' / K'(\beta))] <
\infty$ if and only if
$[\Aut(T_\infty):G_\infty(g,\sigma(\beta))]<\infty$.  For each $n$,
we have $K_n(g,\sigma(\beta)) \cdot K' = K_n(f,\beta) \cdot K'$, since
$g^{-n}(\sigma(\beta)) = \sigma(f^{-n}(\beta))$ and $\sigma$ is
defined over $K'$.  Thus, (2) and (3) are equivalent.
\end{proof}

Applying Proposition~\ref{reductions} and changing coordinates, we may
assume in the proof of Theorem~\ref{thm: main} that our polynomial $f$
is of the form
\[
f(x)=x^3-3a^2x + b
\]
where $a,b\in K$, so that $\pm a$ are the critical points of $f$.
Furthermore, as noted before, we may assume in the function field case
that $K$ is a function field of transcendence degree one over
$\Qbar$.  

\begin{prop}\label{obstructions}
Let $f\in K(x)$ with $d=\deg f\geq 2$ and let $\beta\in K$. Suppose
that any of the following holds:
\begin{enumerate}
\item the map $f:\P^1(\Kbar)\to \P^1(\Kbar)$ is PCF;
\item the pair $(f,\beta)$ is not eventually stable;
\item $\beta$ is postcritical for $f$;
\item $f\in K[x]$, $d\geq 3$ and $f$ has precisely one finite critical
  point; or
\item $f\in K[x]$, $d=3$ and $f$ has distinct finite critical points $\gamma_1\neq\gamma_2$ 
such that there is some $n\geq 1$ with $f^n(\gamma_1)= f^n(\gamma_2)$.
\end{enumerate}
Then  $[\Aut(T_\infty):G_\infty(\beta)]=\infty$.
\end{prop}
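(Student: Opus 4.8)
The plan is to treat each of the five conditions separately, in each case exhibiting an obstruction to $G_\infty(\beta)$ having finite index in $\Aut(T_\infty)$. The uniform strategy is to find some vertex $v$ at level $m$ of the tree, say labeled by a point $\alpha\in f^{-m}(\beta)$, such that the subtree $T_\infty^{(v)}$ of descendants of $v$ is constrained in a way that forces the ``relative'' Galois groups $\Gal(K_n(\beta)/K_{n-1}(\beta))$ to be proper subgroups of $\Aut(T_n/T_{n-1})$ for infinitely many $n$; by the general strategy recalled in Section~\ref{Background} this gives $[\Aut(T_\infty):G_\infty(\beta)]=\infty$.

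First I would dispose of (2): if $(f,\beta)$ is not eventually stable, then $f^n(x)-\beta$ has an unbounded number of irreducible factors over $K(\beta)$ as $n\to\infty$, so $G_n(\beta)$ is not transitive on level $n$ of the tree for infinitely many $n$, and since $\Aut(T_\infty)$ is transitive on every level, $[\Aut(T_n):G_n(\beta)]\to\infty$. Condition (3) is the key dynamical obstruction: if $\beta\in\mathcal{O}_f(\gamma)$ for a critical point $\gamma$, then some $f^j(\gamma)=\beta$, so $\gamma$ is a point at level $j$ of the tree (counted with multiplicity $\geq 2$, since $\gamma$ is ramified), and the multiset $f^{-1}(\gamma)$ has strictly fewer than $d$ distinct points. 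Hence the action on the children of any conjugate of $\gamma$ cannot realize a full transposition-generating symmetric group — more precisely, the ramification of $f$ at $\gamma$ forces a repeated label, so $\Gal(K_{j+1}(\beta)/K_j(\beta))$ embeds in a proper subgroup of $\Aut(T_{j+1}/T_j)$, and this defect propagates: at each subsequent level, the subtree below the conjugates of $\gamma$ inherits the same kind of collapse, so the relative groups stay proper for all $n>j$. (Alternatively, and perhaps more cleanly, one invokes the discriminant/ramification criterion: a prime ramifying in $K_n(\beta)/K(\beta)$ with the right parity controls whether $G_n\cong\Aut(T_n)$, and being postcritical kills the supply of such primes.) Conditions (4) and (5) are variants of the same phenomenon pushed one step further: in (4), with $d\geq 3$ and a single finite critical point $\gamma$, every point of $f^{-1}(\gamma)$ that is itself in the tree has a deficient fiber, and one shows the stabilizer of that branch forces a proper subgroup infinitely often; in (5), $f^n(\gamma_1)=f^n(\gamma_2)$ with $\gamma_1\neq\gamma_2$ for a cubic means the two critical orbits collide, so the subtree below $f^n(\gamma_1)$ receives contributions forcing simultaneous constraints on the branches below the two critical points — again yielding $\Gal(K_m(\beta)/K_{m-1}(\beta))\subsetneq\Aut(T_m/T_{m-1})$ cofinally.

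Condition (1), that $f$ is PCF, I expect to be the main obstacle, and I would handle it by a counting argument: the Galois group $G_n(\beta)$ is contained in the subgroup of $\Aut(T_n)$ consisting of automorphisms ``compatible with'' the finitely many postcritical points, and since $f$ is PCF there are only finitely many ramified points in the whole tree, so the number of primes that can ramify in $K_\infty(\beta)/K(\beta)$ is constrained — concretely, the critical values of $f^n$ all lie in the (finite) postcritical set, so the branch locus of $f^n$ stabilizes. One then shows that $[\Aut(T_n):G_n(\beta)]$ is bounded below by something growing like the discrepancy between $(d!)^{(d^n-1)/(d-1)}$ and the order of the largest subgroup respecting a fixed finite collection of ``critical constraints'' propagated down the tree; this grows without bound. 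The cleanest route is probably to note that PCF-ness implies $K_\infty(\beta)$ is ramified over $K(\beta)$ at only finitely many primes, whereas (using the order formula $|\Aut(T_n)|=(d!)^{(d^n-1)/(d-1)}$ and a Minkowski-type or discriminant bound) a finite-index subgroup would require the discriminants of $K_n(\beta)$ to involve an unbounded number of primes. I would assemble these five arguments and remark, as the paper does, that none of them uses that $f$ is a cubic polynomial in an essential way except (4) and (5), and that all are unconditional.
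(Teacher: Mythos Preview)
Your treatment of (2) and the first half of your treatment of (3) are essentially correct and match the paper's approach (for (3) the paper makes your ``defect propagates'' precise by a clean count: if $f^j(\gamma)=\beta$ with $\gamma$ critical, then $f^{j+k}(x)-\beta$ has at most $d^{j+k}-d^k$ roots, so $[\Aut(T_{j+k}/T_{j+k-1}):\Gal(K_{j+k}/K_{j+k-1})]\geq (d!)^{d^{k-1}}$). For (1) the paper simply cites Jones' survey; your discriminant/ramification sketch is in the right spirit but would need a concrete invariant (e.g.\ the level-$n$ sign character and the finiteness of $S$-units modulo squares) to actually conclude.

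However, your arguments for (4) and (5) have genuine gaps. For (4), the claim that ``every point of $f^{-1}(\gamma)$ that is itself in the tree has a deficient fiber'' is simply false: $\gamma$ is a critical \emph{point}, not a critical \emph{value}, so $f^{-1}(\gamma)$ generically has $d$ distinct elements, and none of this says anything unless $\gamma$ happens to appear in the tree---which is condition (3), not (4). The paper's argument is entirely different and algebraic: a polynomial with a unique finite critical point is conjugate to $x^d+c$, so each step $K_n/K_{n-1}$ is obtained by adjoining $d$-th roots, hence (after adjoining $\mu_d$) $\Gal(K_n/K_{n-1})\subseteq (C_d)^{d^{n-1}}\subsetneq (S_d)^{d^{n-1}}$ for every $n$. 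For (5), ``the two critical orbits collide, so \dots\ simultaneous constraints'' is not an argument; one needs to exhibit an explicit proper overgroup containing $G_\infty(\beta)$. The paper does not do this by elementary means either: it invokes Grell's explicit description (following Pink) of the profinite iterated monodromy group of such $f$, which is shown to have infinite index in $\Aut(T_\infty)$. This is a substantial input, not something that falls out of the branch-collision picture you sketch.
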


\begin{proof}
For (1), see Theorem 3.1 in~\cite{RafeArborealSurvey}. For (2), see Proposition 2.2 and Proposition 3.3 in~\cite{RafeAlonEventualStability}.

For (3), if $f^n(\gamma)=\beta$ for some $n$ and critical point $\gamma$, then $T_\infty(\beta)$ has repeated vertices 
at the $n$th level and above. That is, $f^n(x)-\beta$ has at most $d^n-1$ roots in $\overline{K}$, whence $f^{n+1}(x)-\beta$ has at most $d^{n+1}-d$ roots, and in general, $f^{n+k}(x)-\beta$ has at most $d^{n+k}-d^k$ roots. So 
\[
|\Gal(K_{n+k}(\beta)/K_{n+k-1}(\beta))|\leq|S_d|^{d^{n+k-1}-d^{k-1}}.
\]
As $|\Aut(T_{n+k}/T_{n+k-1})|=(d!)^{d^{n+k-1}}$, it follows that 
\[
[\Aut(T_{n+k}/T_{n+k-1}):\Gal(K_{n+k}(\beta)/K_{n+k-1}(\beta))]\geq (d!)^{d^{k-1}}
\]
for $k\geq 0$, and so $[\Aut(T_\infty):G_\infty(\beta)]=\infty$.

To prove (4), observe that if $f$ has only one finite critical point
$\gamma$, then up to a change of variables (and replacing $K$ by a finite extension), 
$f(x)=x^d + c$ for some $c\in K$.  So $K_n(\beta)$ is an extension of 
$K_{n-1}(\beta)$ attained by adjoining an appropriate number of $d$th roots. 
Adjoining a $d$th root of unity to $K$ 
if necessary, it is easy to see that $\Gal(K_n(\beta)/K_{n-1}(\beta))$ is contained in the direct product of
$d^{n-1}$ copies of $C_d$, the cyclic group of order $d$, whereas $\Aut(T_n/T_{n-1})$ is the direct product of 
$d^{n-1}$ copies of $S_d$. As $d\geq 3$, $C_d$ is a proper subgroup of $S_d$. 
So $\Gal(K_n(\beta)/K_{n-1}(\beta))<\Aut(T_n/T_{n-1})$ for all $n$, and 
$G_\infty(\beta)$ cannot have finite index in $\Aut(T_\infty)$. 
See \cite{HamblenJonesMadhu} for more discussion
of iterated Galois groups of polynomials with one finite critical point.

In the situation where there is an $n$ such that $f^n(\gamma_1) = f^n(\gamma_2)$, 
where $\gamma_1$ and $\gamma_2$ are not preperiodic, Grell \cite{GG} has given an explicit description
of the profinite iterated monodromy group of $f$, following work of
Pink in the quadratic case \cite{PinkQuadraticInfiniteOrbits}. (If $\gamma_1,\gamma_2$ are preperiodic, then $f$ is PCF.) 
In particular, these groups have infinite index in $\Aut(T_\infty)$, and thus the iterated Galois
groups of such $f$ must as well.  This proves that
$[\Aut(T_\infty):G_\infty(\beta)]=\infty$ when (5) holds.  
\end{proof}

\section{Eventual Stability} \label{stable}

Recall the definitions of stability and eventual stability from the introduction. 
We establish some facts about eventually stable rational functions that will be used later. First 
we recall an basic result of algebra known as Capelli's Lemma. For a proof, see~\cite[p. 490]{Capelli}.

\begin{lem}[Capelli] 
Let $K$ be a field, $f(x)$, $g(x) \in K[x]$, and let $\alpha\in \overline{K}$ be any root of $g(x)$. Then $g(f(x))$ is irreducible over $K$ if and only if both $g$ is irreducible over $K$ and $f(x) - \alpha$ is irreducible over $K(\alpha)$.
\end{lem}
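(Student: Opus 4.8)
The plan is to prove Capelli's Lemma by a tower-of-degrees argument. Write $m=\deg g$ and $d=\deg f$, so $\deg(g\circ f)=md$. First I would record the trivial direction of the ``$g$ irreducible'' condition: any nontrivial factorization $g=g_1g_2$ over $K$ pulls back to a nontrivial factorization $g(f(x))=g_1(f(x))g_2(f(x))$ (the degrees add correctly since $\deg f\geq 1$), so irreducibility of $g\circ f$ already forces irreducibility of $g$. Thus in the forward direction I may assume $g$ is irreducible, and in the reverse direction it is a hypothesis.

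For the forward direction, suppose $g(f(x))$ is irreducible over $K$ and pick a root $\theta\in\overline{K}$, so $[K(\theta):K]=md$. Since $g(f(\theta))=0$, the element $f(\theta)\in K(\theta)$ is a root of the irreducible polynomial $g$, hence $[K(f(\theta)):K]=m$ and so $[K(\theta):K(f(\theta))]=d$. But $\theta$ is a root of $f(x)-f(\theta)$, a degree-$d$ polynomial over $K(f(\theta))$, so this polynomial must be a scalar multiple of the minimal polynomial of $\theta$ over $K(f(\theta))$ and in particular is irreducible there. Finally, since $\alpha$ and $f(\theta)$ are both roots of the irreducible $g$, there is a $K$-isomorphism $K(\alpha)\xrightarrow{\sim}K(f(\theta))$ with $\alpha\mapsto f(\theta)$; applying it coefficientwise carries $f(x)-\alpha$ to $f(x)-f(\theta)$, so $f(x)-\alpha$ is irreducible over $K(\alpha)$. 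For the reverse direction, assume $g$ is irreducible over $K$ and $f(x)-\alpha$ is irreducible over $K(\alpha)$; let $\theta$ be a root of $f(x)-\alpha$, so $f(\theta)=\alpha$ and $g(f(\theta))=0$, i.e.\ $\theta$ is a root of $g(f(x))$. Then $[K(\alpha):K]=m$, and since $\alpha=f(\theta)\in K(\theta)$ we have $K(\theta)=K(\alpha)(\theta)$ with $[K(\alpha)(\theta):K(\alpha)]=d$; multiplying gives $[K(\theta):K]=md=\deg(g\circ f)$, which forces the minimal polynomial of $\theta$ over $K$ to be a scalar multiple of $g(f(x))$, so $g\circ f$ is irreducible.

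There is no real obstacle here: the whole argument rests on multiplicativity of field degrees and the standard fact that conjugate elements over $K$ generate $K$-isomorphic extensions. The only points demanding a little care are that the statement allows $\alpha$ to be \emph{any} root of $g$, which is exactly what the conjugacy isomorphism handles in the forward direction, and the identity $K(\theta)=K(\alpha)(\theta)$ in the reverse direction, which is what makes the degrees genuinely multiply; both are routine.
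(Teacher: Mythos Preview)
Your argument is correct and is the standard proof of Capelli's Lemma. The paper itself does not supply a proof: it simply cites a reference (``For a proof, see~[p.~490]''), so there is nothing to compare against beyond noting that your tower-of-degrees approach is exactly the classical one.
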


\begin{prop}\label{eventual stability facts}
Let $f\in K[x]$ and $\beta\in \Kbar$. Suppose that the pair $(f,\beta)$ is eventually stable. Then the following hold.
\begin{enumerate}
\item For all sufficiently large $n$ and any $\alpha\in f^{-n}(\beta)$, $f(x)-\alpha$ is irreducible over $K(\alpha)$. 
\item There exists $n$ such that for any $\alpha\in f^{-n}(\beta)$, the pair $(f,\alpha)$ is stable.
\item The point $\beta$ is not periodic for $f$.
\end{enumerate}
\end{prop}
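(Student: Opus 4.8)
The plan is to track how the factorization of $f^n(x)-\beta$ over $K(\beta)$ refines as $n$ grows, with Capelli's Lemma as the engine. First I would let $r_n$ denote the number of irreducible factors of $f^n(x)-\beta$ over $K(\beta)$, counted with multiplicity, and show that $(r_n)$ is nondecreasing. Using the identity $f^{n+1}(x)-\beta=g_n(f(x))$ with $g_n(y)=f^n(y)-\beta$, and writing $g_n=\prod_i q_i^{e_i}$ with the $q_i$ distinct irreducibles over $K(\beta)$, we get $f^{n+1}(x)-\beta=\prod_i q_i(f(x))^{e_i}$. The factors $q_i(f(x))$ are pairwise coprime --- a common root would be carried by $f$ to a common root of two distinct $q_i$ --- so $r_{n+1}=\sum_i e_i c_i$, where $c_i\geq 1$ is the number of irreducible factors of $q_i(f(x))$ over $K(\beta)$, counted with multiplicity. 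Hence $r_{n+1}\geq r_n$; since $(f,\beta)$ is eventually stable, $(r_n)$ is also bounded, so it is eventually constant, say $r_n=r_N$ for all $n\geq N$.

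For part (1), I would observe that for $n\geq N$ the equality $\sum_i e_i c_i=r_{n+1}=r_n=\sum_i e_i$ together with $c_i\geq 1$ forces $c_i=1$ for every $i$, i.e.\ each $q_i(f(x))$ is irreducible over $K(\beta)$. Capelli's Lemma (applied with its ``$g$'' equal to $q_i$) then gives that $f(x)-\alpha$ is irreducible over $K(\alpha)$ for any root $\alpha$ of $q_i$. Since these roots exhaust $f^{-n}(\beta)$, this proves part (1), and in fact for every $n\geq N$.

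For part (2), the idea is to iterate Capelli's Lemma: from $f^{m+1}(x)-\alpha=h_m(f(x))$ with $h_m(y)=f^m(y)-\alpha$, induction on $m$ shows that $(f,\alpha)$ is stable if and only if $f(x)-\delta$ is irreducible over $K(\delta)$ for every $\delta\in\bigcup_{m\geq 0}f^{-m}(\alpha)$. Taking $n=N$ and any $\alpha\in f^{-N}(\beta)$, every such $\delta$ lies in $f^{-(N+m)}(\beta)$ for some $m\geq 0$, so part (1) applies and $(f,\alpha)$ is stable; the same argument works at any level $n\geq N$. Part (3) then follows by contradiction: if $\beta$ were periodic of period $p$, choose $k$ with $n:=kp\geq N$, so that $\beta\in f^{-n}(\beta)$ and part (2) forces $(f,\beta)$ to be stable; but $f^p(x)-\beta$ has the root $\beta\in K(\beta)$ and degree $d^p>1$, so it is reducible, a contradiction. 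I do not expect any step here to be a genuine obstacle; the only care required is in the refinement bookkeeping of the first paragraph --- in particular fixing the normalization that irreducible factors are counted with multiplicity, which matches the convention that $T_n(\beta)$ records roots with multiplicity and is needed for the statement as given, as shown by $f=x^3$, $\beta=0$ (here $0$ is periodic, yet $f^n(x)=x^{3^n}$ has a single irreducible factor once multiplicity is ignored) --- and in the inductive application of Capelli's Lemma in part (2).
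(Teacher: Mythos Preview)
Your proof is correct and follows essentially the same route as the paper: both take the level where the factor count stabilizes, use the identity $f^{n+1}(x)-\beta=\prod_i g_i(f(x))$ to see that each $g_i\circ f$ stays irreducible, and then invoke Capelli's Lemma. Your treatment of (2) via the iterated Capelli characterization is a rephrasing of the paper's degree computation $[K(z):K(\alpha)]=(\deg f)^m$, and your argument for (3) (showing $\beta$ itself lies in $f^{-n}(\beta)$ and hence $(f,\beta)$ would be stable) is a minor variant of the paper's choice $\alpha=f(\beta)$. Your explicit bookkeeping of multiplicities, illustrated by $f=x^3$, $\beta=0$, is a genuine clarification that the paper glosses over.
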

\begin{proof}
Let $m$ be the maximum number of irreducible factors over $K(\beta)$ of $f^n(x)-\beta$ for any $n\geq 1$, so that
$$f^k(x)-\beta=g_1(x)\cdots g_m(x)$$
for some $k$ and some irreducible polynomials $g_1,\dots,g_m\in K(\beta)[x]$. Then for any $n \geq k$, we have
$$f^n(x)-\beta=g_1(f^{n-k}(x))\cdots g_m(f^{n-k}(x)),$$
and each $g_i(f^{n-k}(x))$ is irreducible over $K(\beta)$, as otherwise $m$ would 
not be the maximum number of irreducible factors of $f^n(x)-\beta$. 

Let $\theta_i=g_i\circ f^{n-k}$. For any $\alpha\in f^{-n}(\beta)$, we have $f^n(\alpha)=\beta$ 
and so $\theta_i(\alpha)=0$ for some $i$. Writing
$$f^{n+1}(x)-\beta = \theta_1(f(x))\cdots \theta_m(f(x))$$
shows that each $\theta_i(f(x))$ is irreducible over $K$. By Capelli's Lemma, $f(x)-\alpha$ is irreducible over $K(\alpha)$, proving (1).

Now let $n$ be large and let $\alpha\in f^{-n}(\beta)$. By (1), for any $m\geq 1$ and any $z\in f^{-m}(\alpha)$, we have $[K(z):K(f(z))]=\deg f$. Thus $[K(z):K(\alpha)]=(\deg f)^m$. Therefore $f^m(x)-\alpha$ is irreducible over $K(\alpha)$. So the pair $(f,\alpha)$ is stable, proving (2).

Finally, if $\beta$ is periodic for $f$, then there are infinitely many $n$ such that $f^n(\beta)=\beta$. If we set $\alpha=f(\beta)$, then $\alpha\in f^{-(n-1)}(\beta)$ for each of these $n$. But then $f(x)-\alpha$ is certainly not irreducible over $K(\alpha)$, because $\beta$ is a root, contradicting (1). This proves (3).
\end{proof}
\begin{rem}
In~\cite{RafeAlonEventualStability}, Jones and Levy conjecture that the converse of part (3) of 
Proposition~\ref{eventual stability facts} holds for all non-isotrivial $f$. 
If this conjecture holds, it would somewhat clarify the list of finite index 
obstructions in Theorem~\ref{thm: main}, as the eventual stability condition could be replaced by a 
simpler non-periodicity condition.
\end{rem}

\section{Height Estimates}\label{heights}
In this section we prove a variety of height estimates that will 
be used in the proof of Theorem~\ref{thm: main}. For background on heights, see 
~\cite{HindrySilverman,GNT,BridyTucker}. We set some notation below. 

If $K$ is a number field, let $\fo_K$ be its ring of 
integers. If $K$ is a function field, choose a prime $\fq$ of $K$ and set 
\[
\fo_K=\{z\in K:v_\fp(z)\geq 0\text{ for all }\fp\neq\fq\}.
\]
Let $\fp$ be a non-archimedean prime of $K$, and let $k_\fp$ be the residue field $\fo_K/\fp$. 
If $K$ is a number field, define
\[
N_\fp = \frac{1}{[K:\Q]}\log\#k_\fp,
\]
and if $K$ is a function field with field of constants $k$, instead define
\[
N_\fp =[k_\fp:k].
\]
For $z\in K$, let $h(z)$ denote the logarithmic height of $z$. For $f\in K[x]$ with $\deg f=d\geq 2$, 
let $h_f(z)$ denote the Call-Silverman canonical height of $z$ relative to $f$~\cite{CallSilverman}, defined by
\[
h_f(z) = \lim_{n\to\infty}\frac{h(f^n(z))}{d^n}.
\]
We will often write sums indexed by primes that 
satisfy some condition. These are taken to be primes of $\fo_K$. 
As an example of our indexing convention, observe that
\[
\sum_{v_\fp(z)>0} v_\fp(z)N_\fp\leq h(z)
\]
by the product formula for $K$.

We make use of the 
notion of \emph{good reduction} of a map $f\in K(x)$ at a prime $\fp$, which for a polynomial 
\[
f(x) = a_dx^d + a_{d-1}x^{d-1}+\dots + a_1x + a_0
\]
means that $v_\fp(a_d)=0$ and $v_\fp(a_i)\geq 0$ for $0\leq i\leq d-1$. 
See \cite{MortonSilverman} or \cite[Theorem 2.15]{SilvermanADS} for a 
more careful definition that also applies to rational functions. 
There are only finitely many primes $\fp$ such 
that $f$ has bad reduction (that is, does not have good reduction) at $\fp$. 
The main consequence of good reduction we will use is that if $f$ has good reduction at $\fp$, 
then $f$ commutes with the reduction mod $\fp$ map $\P^1(\Kbar)\to \P^1(\bar{k}_\fp)$. 
This is clear for polynomials; see \cite[Theorem 2.18]{SilvermanADS} for a proof in general. 
We further say that $f$ has \emph{good separable reduction} at $\fp$ if the reduced map  
$\bar{f}:\P^1(\bar{k}_\fp)\to\P^1(\bar{k}_\fp)$ is separable. 

\begin{lem}\label{B-delta}
Let $f\in K[x]$ with $d=\deg(f)\geq 2$. Let $\gamma, \beta \in K$ such that $h_f(\gamma)>0$ and 
$\beta\notin\mathcal{O}_f(\gamma)$.  
For any $\delta > 0$, there exists a constant $B_\delta$ with 
%\frac{\delta}{4d}\leq 
$\frac{1}{d^{B_\delta}}<\frac{\delta}{4}$
such that for any $n>B_\delta$, we have
\[ 
\sum_{m=1}^{n - B_\delta} \sum_{v_\fp(f^m(\gamma) - \beta) > 0} N_\fp \leq
\delta d^n h_f(\gamma).
\]
Moreover, for all sufficiently small $\delta$, we have $B_\delta\leq -\frac{\log\delta}{\log d}$.
\end{lem}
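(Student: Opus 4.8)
The plan is to bound the sum $\sum_{m=1}^{n-B_\delta} \sum_{v_\fp(f^m(\gamma)-\beta)>0} N_\fp$ term by term using the product formula, and then use the growth of the canonical height along the orbit of $\gamma$ to absorb the geometric-type series into $\delta d^n h_f(\gamma)$. First I would note that for each fixed $m$, the product formula for $K$ (as recorded in the indexing-convention display in Section \ref{heights}) gives
\[
\sum_{v_\fp(f^m(\gamma)-\beta)>0} N_\fp \;\le\; h\bigl(f^m(\gamma)-\beta\bigr) \;\le\; h\bigl(f^m(\gamma)\bigr) + h(\beta) + O(1),
\]
where the $O(1)$ (in fact $\log 2$ in the appropriate normalization) comes from the standard height-of-a-difference estimate. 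The hypothesis $\beta\notin\mathcal{O}_f(\gamma)$ guarantees $f^m(\gamma)\ne\beta$, so each term in the inner sum is a genuine nonzero contribution, but this is only needed to make the valuations well-defined; the real content is the size bound.

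Next I would compare $h(f^m(\gamma))$ with the canonical height. By the defining property of the Call--Silverman canonical height there is a constant $C$ (depending on $f$ and $K$ only) with $|h(f^m(\gamma)) - d^m h_f(\gamma)| \le C$ for all $m\ge 0$; combined with the previous step this yields
\[
\sum_{v_\fp(f^m(\gamma)-\beta)>0} N_\fp \;\le\; d^m h_f(\gamma) + C'
\]
for a constant $C'$ absorbing $C$, $h(\beta)$, and the $O(1)$. Summing over $m$ from $1$ to $n-B_\delta$ gives
\[
\sum_{m=1}^{n-B_\delta}\sum_{v_\fp(f^m(\gamma)-\beta)>0} N_\fp
\;\le\; h_f(\gamma)\sum_{m=1}^{n-B_\delta} d^m + C'(n-B_\delta)
\;\le\; \frac{d^{n-B_\delta+1}}{d-1}\, h_f(\gamma) + C'(n-B_\delta).
\]
Since $h_f(\gamma)>0$, the linear term $C'(n-B_\delta)$ is $o(d^n)$, so for all $n$ large (depending only on $\delta$, $f$, $\gamma$, $\beta$) it is at most, say, $\tfrac{\delta}{4}d^n h_f(\gamma)$. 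The main term is $\tfrac{d}{d-1}d^{-B_\delta}\cdot d^n h_f(\gamma) \le 2 d^{-B_\delta} d^n h_f(\gamma)$, so choosing $B_\delta$ to be the least integer with $2d^{-B_\delta}\le \tfrac{3\delta}{4}$ — equivalently $d^{-B_\delta}<\tfrac{\delta}{4}$ suffices for a cruder bound, and one checks this forces $B_\delta \le -\log\delta/\log d$ once $\delta$ is small enough that the rounding and the constant $2$ are swallowed — makes the total at most $\delta d^n h_f(\gamma)$. I would also fold the requirement $d^{-B_\delta}<\delta/4$ directly into the choice of $B_\delta$, which is consistent with the asymptotic bound $B_\delta\le -\log\delta/\log d$ claimed in the statement, since $-\log\delta/\log d$ grows like $\log(1/\delta)$ while the threshold only needs $d^{-B_\delta}$ below a constant multiple of $\delta$.

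The only mildly delicate point — the ``main obstacle,'' though it is not severe — is bookkeeping the two competing error sources: the additive constant $C'$ accumulates linearly in $n$ and must be shown negligible against $d^n$, while the geometric main term must be shown to be governed purely by $d^{-B_\delta}$ with an explicit (small) implied constant so that the final bound $B_\delta\le -\log\delta/\log d$ holds for all sufficiently small $\delta$. Both are handled by taking $n$ large relative to $\delta$ and choosing $B_\delta = \lceil -\log(\delta/4)/\log d\rceil$ or similar; one then verifies $B_\delta\le -\log\delta/\log d$ for $\delta$ below an absolute threshold. No deep input is needed beyond the product formula and the basic functoriality of the canonical height, both already available in Section \ref{heights}.
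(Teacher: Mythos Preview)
Your proof is correct and follows essentially the same route as the paper's: bound each inner sum by $h(f^m(\gamma)-\beta)$ via the product formula, replace $h$ by $h_f$ up to a bounded error, sum the resulting geometric series, and absorb the linearly growing additive error into $\tfrac{\delta}{2}d^n h_f(\gamma)$ for $n>B_\delta$. The paper's bookkeeping differs only cosmetically (it names the constants $C_\beta$ and $C_f$ and pins down $B_\delta$ by the two-sided condition $\tfrac{\delta}{4d}\le d^{-B_\delta}<\tfrac{\delta}{4}$), and both arguments handle the final bound $B_\delta\le -\log\delta/\log d$ with the same ``for $\delta$ small enough'' hand-wave.
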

\begin{proof}

This is a variant of the inequality that underlies Proposition 5.1 of \cite{GNT}. By the product formula,
\begin{align*}
\sum_{m=1}^{n - B_\delta} \sum_{v_\fp(f^m(\gamma )-\beta) > 0} N_\fp\leq 
\sum_{m=1}^{n - B_\delta} h(f^m(\gamma)-\beta).
\end{align*}
Recall that the canonical height $h_f$ satisfies $h_f(f(z))=dh_f(z)$ and that
$\left|h(z)-h_f(z)\right |$ is uniformly bounded by a constant $C_f$ for all $z\in K$~\cite{CallSilverman}. 
Also note that $|h(z-\beta)-h(z)|$  is uniformly bounded by a constant 
$C_\beta$ depending only on $\beta$ for all $z\in K$. Choose $B_\delta$ such that 
$\frac{\delta}{4d}\leq\frac{1}{d^{B_\delta}}<\frac{\delta}{4}$ and 
$\frac{\delta}{2}d^nh_f(\gamma)> n(C_\beta+C_f)$ for all $n>B_\delta$. 
Note that it is always possible to choose such a $B_\delta$, as the conclusion of the Lemma 
holds upon replacing $\delta$ with any positive $\delta'\leq \delta$, and 
$\frac{n(C_\beta+C_f)}{d^nh_f(\gamma)}\to 0$ as $n\to\infty$.

If $n>B_\delta$, then
\begin{align*}
\sum_{m=1}^{n - B_\delta} h(f^m(\gamma)-\beta) & 
\leq \sum_{m=1}^{n - B_\delta} (h(f^m(\gamma))+C_\beta) \\
& \leq \sum_{m=1}^{n - B_\delta} (h_f(f^m(\gamma))+C_\beta+C_f) \\
& = n(C_\beta+C_f)+ \sum_{m=1}^{n - B_\delta} d^m h_f(\gamma) \\
& \leq n(C_\beta+C_f)+\frac{d^nh_f(\gamma)}{d^{B_\delta}}\sum_{r=0}^\infty \frac{1}{d^r}\\
& \leq n(C_\beta+C_f)+\frac{\delta}{2}d^n h_f(\gamma)\\
& \leq \delta d^nh_f(\gamma).
\end{align*}
We have $\frac{\delta}{4d}\leq\frac{1}{d^{B_\delta}}$, so $\log 4d -\log \delta\geq B_\delta\log d$, and 
\[
 B_\delta\leq -\frac{\log\delta}{\log d}.
\]
\end{proof}

Note that in the statements of Lemmas~\ref{from-5.1},~\ref{one-orbit}, and~\ref{odd}, the chosen 
$\beta_1$ and $\beta_2$ in $K$ need not be distinct.

\begin{lem}\label{from-5.1}
Let $f\in K[x]$ with $d=\deg(f)\geq 2$. Let $\gamma\in K$ with $h_f(\gamma)>0$.  Let
  $\beta_1, \beta_2 \in K$ such that $\beta_2\notin\O_f(\beta_1)$. 
  For $n>0$, let $\cX(n)$ denote the
  set of primes $\p$ of $\fo_K$ such that
\[ 
\min(v_\p(f^m(\gamma)-\beta_1), v_\fp(f^n(\gamma) - \beta_2)) > 0
\]
for some $0 < m < n$. Then for any $\delta >0$, we have
\begin{equation*}
\sum_{\p\in\cX(n)} N_\p\leq \delta d^n h_f(\gamma)+ O_\delta(1).  
\end{equation*}
for all $n$.
\end{lem}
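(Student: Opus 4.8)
The plan is to bound the primes in $\cX(n)$ by relating membership in $\cX(n)$ to a divisibility condition governed by the canonical height, in the spirit of Proposition 5.1 of \cite{GNT}. First I would fix $\delta>0$ and apply Lemma~\ref{B-delta} with the constant $B_\delta$ it provides. The key observation is that if $\p\in\cX(n)$, then there is some $m$ with $0<m<n$ such that $v_\p(f^m(\gamma)-\beta_1)>0$ and $v_\p(f^n(\gamma)-\beta_2)>0$. I would split into two cases according to the size of $m$: either $m\le n-B_\delta$, or $n-B_\delta<m<n$.

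In the first case, $m\le n-B_\delta$, the prime $\p$ already satisfies $v_\p(f^m(\gamma)-\beta_1)>0$ for some $m$ in the range $1\le m\le n-B_\delta$, so Lemma~\ref{B-delta} (applied with $\gamma$, $\beta=\beta_1$, noting $\beta_1\notin\O_f(\gamma)$ since $h_f(\gamma)>0$ forces $\gamma$ to have infinite orbit, and if $\beta_1$ were in the orbit we could absorb finitely many exceptions into the $O_\delta(1)$ term --- one must be slightly careful here and perhaps shrink the orbit condition or note $\beta_1$ can be handled separately) bounds the total contribution $\sum N_\p$ over such primes by $\delta d^n h_f(\gamma)$. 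In the second case, $n-B_\delta<m<n$, so there are only $B_\delta$ choices of $m$; for each such fixed $m$, every prime $\p$ under consideration divides $f^n(\gamma)-\beta_2$, so $\sum_\p N_\p\le h(f^n(\gamma)-\beta_2)\le d^n h_f(\gamma)+O(1)$ by the product formula and the comparison between $h$ and $h_f$. Hence this case contributes at most $B_\delta\bigl(d^n h_f(\gamma)+O(1)\bigr)$, which is a bounded multiple of $d^n h_f(\gamma)$ plus $O_\delta(1)$ --- but this is the wrong order of magnitude, so the naive split fails and I need a better argument in the second case.

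The fix is to not use the trivial bound on $v_\p(f^n(\gamma)-\beta_2)$ but rather to reorganize: in the range $n-B_\delta<m<n$, instead bound the contribution using $v_\p(f^m(\gamma)-\beta_1)$ with $m$ close to $n$. Since $f^m(\gamma)-\beta_1$ has height $\asymp d^m h_f(\gamma)\ge d^{n-B_\delta}h_f(\gamma)=d^n h_f(\gamma)/d^{B_\delta}$, and since $1/d^{B_\delta}<\delta/4$ by the choice of $B_\delta$ in Lemma~\ref{B-delta}, summing $\sum_\p v_\p(f^m(\gamma)-\beta_1)N_\p\le h(f^m(\gamma)-\beta_1)\le d^m h_f(\gamma)+O(1)$ over the at most $B_\delta$ values of $m$ in this range gives a bound of the form $B_\delta\cdot(d^{n}h_f(\gamma)/d^{B_\delta}\cdot(\text{geometric factor}))+O_\delta(1)$. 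A cleaner route, which I expect to be the actual argument: sum over \emph{all} $m$ with $0<m<n$ of $\min(v_\p(f^m(\gamma)-\beta_1),v_\p(f^n(\gamma)-\beta_2))$, and observe $\sum_{\p\in\cX(n)}N_\p\le \sum_{m=1}^{n-1}\sum_{v_\p(f^m(\gamma)-\beta_1)>0, v_\p(f^n(\gamma)-\beta_2)>0}N_\p$; then for $m\le n-B_\delta$ use Lemma~\ref{B-delta} directly (dropping the $\beta_2$ condition), and for the finitely many $m>n-B_\delta$ use the $\beta_1$ condition with the height of $f^m(\gamma)-\beta_1$ being small relative to $d^n h_f(\gamma)$ because $m<n$ means $d^m\le d^{n-1}$ --- no, that only saves a factor $d$.

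\textbf{Main obstacle.} The genuine difficulty, and the heart of the proof, is controlling the ``diagonal'' contribution from $m$ close to $n$: a prime dividing both $f^m(\gamma)-\beta_1$ and $f^n(\gamma)-\beta_2$ with $m\approx n$ cannot be handled by the crude product-formula bound on either factor alone, since each individually has height comparable to $d^n h_f(\gamma)$. This is exactly the ``dynamical gcd'' phenomenon: one needs that $f^m(\gamma)-\beta_1$ and $f^n(\gamma)-\beta_2$ share only a small common factor. I expect the resolution is that if $v_\p(f^m(\gamma)-\beta_1)>0$, then (at primes of good separable reduction) $\bar\gamma$ maps to $\bar\beta_1$ after $m$ steps, hence $f^{n}(\bar\gamma)=f^{n-m}(\bar\beta_1)$, so $v_\p(f^n(\gamma)-\beta_2)>0$ forces $v_\p(f^{n-m}(\beta_1)-\beta_2)>0$ as well; since $\beta_2\notin\O_f(\beta_1)$, the quantities $f^{j}(\beta_1)-\beta_2$ for $j\ge 1$ are nonzero elements of $K$ with heights growing like $d^j h_f(\beta_1)$ (or bounded, if $h_f(\beta_1)=0$), and summing $\sum_{j}\sum_{v_\p(f^j(\beta_1)-\beta_2)>0}N_\p$ over $j$ up to some bound absorbs this into $O_\delta(1)$ after excluding the finitely many bad primes and a bounded initial segment. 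Assembling these pieces --- the good-reduction bookkeeping, the split at $B_\delta$, and the reduction of the diagonal term to the orbit $\O_f(\beta_1)$ not containing $\beta_2$ --- yields the stated inequality $\sum_{\p\in\cX(n)}N_\p\le \delta d^n h_f(\gamma)+O_\delta(1)$.
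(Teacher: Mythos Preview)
Your proposal, after the exploratory detours, lands on exactly the argument in the paper: split at $B_\delta$, use Lemma~\ref{B-delta} for $m\le n-B_\delta$, and for $n-B_\delta<m<n$ use good reduction to deduce $v_\fp(f^{n-m}(\beta_1)-\beta_2)>0$, so that the remaining contribution is bounded by $\sum_{i=1}^{B_\delta-1}\sum_{v_\fp(f^i(\beta_1)-\beta_2)>0}N_\fp$, a finite sum of $N_\fp$ over a fixed (depending on $\delta$) set of primes since each $f^i(\beta_1)-\beta_2$ is a nonzero element of $K$. Your earlier attempts (bounding via $h(f^n(\gamma)-\beta_2)$ or $h(f^m(\gamma)-\beta_1)$ directly) were rightly abandoned for the reasons you identified; the good-reduction transfer to the orbit of $\beta_1$ is indeed the whole point, and once you have it the proof is immediate.
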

\begin{proof}
For the finitely many primes $\fp$ such that $f$ has bad reduction at $\fp$, we can 
absorb any contribution to the sum of $N_\fp$ into $O_\delta(1)$, so we can 
assume that the primes in $\cX(n)$ are primes of good 
reduction for $f$. Then if
$$\min(v_\p(f^m(\gamma)-\beta_1), v_\fp(f^n(\gamma) - \beta_2)) > 0,$$
we have $f^m(\gamma)\equiv\beta_1\pmod{\fp}$ and $f^n(\gamma)\equiv\beta_2\pmod{\fp}$, so 
good reduction implies 
$f^{n-m}(\beta_1)\equiv\beta_2\pmod{\fp}$, equivalently $v_\fp(f^{n-m}(\beta_1)-\beta_2)>0$. Applying Lemma \ref{B-delta}, we have 
\begin{align*}
\sum_{\p\in\cX(n)} N_\p 
&\leq \sum_{m=1}^{n - B_\delta} \sum_{v_\fp(f^m(\gamma) - \beta_1) > 0} N_\fp + 
\sum_{m=n-B_\delta+1}^{n-1} \sum_{v_\fp(f^{n-m}(\beta_1)-\beta_2)>0} N_\fp + O_\delta(1) \\
& \leq \delta d^nh_f(\gamma) + 
\sum_{i=1}^{B_\delta-1}\sum_{v_\fp(f^i(\beta_1)-\beta_2)>0}N_\fp + O_\delta(1).
\end{align*}
Because $f^i(\beta_1)-\beta_2\neq 0$ for all $i$, the remaining sum of $N_\fp$ comprises a finite set of primes $\fp$ depending on $B_\delta$ (and thus on $\delta$), so it can be absorbed into the $O_\delta(1)$ term. 
\end{proof}

\begin{lem}\label{one-orbit}
Let $f\in K[x]$ with $d=\deg(f)\geq 2$. 
  Let $\gamma_1, \gamma_2 \in K$ with $h_f(\gamma_2)>0$, and suppose
  there are integers $\ell_1 > \ell_2$ such that
  $f^{\ell_1}(\gamma_1) = f^{\ell_2}(\gamma_2)$.  Let
  $\beta_1, \beta_2 \in K$ with $\beta_2\notin\O_f(\beta_1)$.  For $n>0$, let $\cY(n)$ denote
  the set of primes $\p$ such that
\[ 
\min(v_\p(f^m(\gamma_1)-\beta_1), v_\fp(f^n(\gamma_2) - \beta_2)) > 0
\]
for some $0 < m \leq n$. Then for any $\delta >0$, we have
\begin{equation*}
\sum_{\p\in\cY(n)} N_\p\leq \delta d^n h_f(\gamma_2)+ O_\delta(1).  
\end{equation*}
for all $n$.
\end{lem}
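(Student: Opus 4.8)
The plan is to reduce Lemma~\ref{one-orbit} to Lemma~\ref{from-5.1} by exploiting the coincidence in the orbits $f^{\ell_1}(\gamma_1) = f^{\ell_2}(\gamma_2)$. First I would discard the finitely many primes of bad reduction for $f$, absorbing their contribution into $O_\delta(1)$; this lets me work with primes of good reduction, where $f$ commutes with reduction mod $\fp$. Next, fix a prime $\fp \in \cY(n)$, so that for some $0 < m \leq n$ we have $f^m(\gamma_1) \equiv \beta_1 \pmod{\fp}$ and $f^n(\gamma_2) \equiv \beta_2 \pmod{\fp}$. The idea is to push $\gamma_1$ forward far enough that its orbit merges with that of $\gamma_2$: applying good reduction, $f^{\max(m,\ell_1)}(\gamma_1) \equiv f^{\max(m,\ell_1) - m + \ell_1 \cdots}$ — more precisely, since $f^{\ell_1}(\gamma_1) = f^{\ell_2}(\gamma_2)$ exactly (not just mod $\fp$), for any $j \geq \ell_1$ we have $f^j(\gamma_1) = f^{j - \ell_1 + \ell_2}(\gamma_2)$.

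The main case to handle is $m \geq \ell_1$: then $\beta_1 \equiv f^m(\gamma_1) = f^{m - \ell_1 + \ell_2}(\gamma_2) \pmod{\fp}$, and setting $m' = m - \ell_1 + \ell_2$ we have $0 < m' < n$ (here $m' \geq 1$ needs $m \geq \ell_1 > \ell_2 \geq 0$ so $m' = m - (\ell_1 - \ell_2) \geq \ell_2 \geq 0$, and actually $m' \geq 1$ since... one must check the boundary, but at worst shift by using $m' \geq \ell_2$ and handle $\ell_2 = 0$ separately, or note $m \geq \ell_1 \geq 1$ forces $m' \geq \ell_2$; if $m' = 0$ the congruence $f^0(\gamma_2) = \gamma_2 \equiv \beta_1$ puts $\fp$ in a fixed finite set). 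And $m' < n$ unless $m = n$ and $\ell_1 = \ell_2$, impossible since $\ell_1 > \ell_2$; if $m' = n - 1 < n$ we are fine, and if $m = n$ then $m' = n - (\ell_1 - \ell_2) < n$. So $\fp$ satisfies $\min(v_\fp(f^{m'}(\gamma_2) - \beta_1), v_\fp(f^n(\gamma_2) - \beta_2)) > 0$ for some $0 < m' < n$, placing $\fp$ in the set $\cX(n)$ of Lemma~\ref{from-5.1} applied with $\gamma = \gamma_2$ and the same $\beta_1, \beta_2$ (whose hypotheses $h_f(\gamma_2) > 0$ and $\beta_2 \notin \O_f(\beta_1)$ we already have). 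That lemma gives $\sum N_\fp \leq \delta d^n h_f(\gamma_2) + O_\delta(1)$.

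The remaining case is $1 \leq m < \ell_1$, which only involves finitely many values of $m$ (namely $m \in \{1, \dots, \ell_1 - 1\}$, a bounded range independent of $n$). For each such fixed $m$, the condition $v_\fp(f^m(\gamma_1) - \beta_1) > 0$ restricts $\fp$ to the finite set of primes dividing the nonzero integer $f^m(\gamma_1) - \beta_1$ — nonzero because $\beta_2 \notin \O_f(\beta_1)$ does not directly give this, so instead I note that if $f^m(\gamma_1) = \beta_1$ for some such $m$ this is a single fixed equation and contributes nothing asymptotically; otherwise $f^m(\gamma_1) - \beta_1 \neq 0$ and $\sum_{v_\fp(f^m(\gamma_1) - \beta_1) > 0} N_\fp \leq h(f^m(\gamma_1) - \beta_1) = O(1)$, a constant depending only on $m$ and hence, over the bounded range of $m$, absorbable into $O_\delta(1)$ (in fact into $O(1)$). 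Combining the two cases yields the stated bound.

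The main obstacle is bookkeeping around the boundary values of the shifted index $m'$ — ensuring $0 < m' < n$ so that Lemma~\ref{from-5.1} genuinely applies, and correctly sweeping the degenerate sub-cases ($m' = 0$, or $f^{m}(\gamma_1) = \beta_1$ for small $m$) into the error term. None of these is deep, but they must be stated carefully since $\cX(n)$ in Lemma~\ref{from-5.1} requires the strict inequality $0 < m < n$ whereas $\cY(n)$ here allows $m = n$; the orbit-merging shift is precisely what converts the $m = n$ case into an $m' < n$ case, and this is the one place where the hypothesis $\ell_1 > \ell_2$ (strict) is essential.
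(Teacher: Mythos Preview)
Your proposal is correct and follows essentially the same approach as the paper: absorb the finitely many primes of bad reduction and those arising from small $m$ (i.e.\ $m<\ell_1$) into the $O_\delta(1)$ term, then for $m\geq\ell_1$ use the exact identity $f^m(\gamma_1)=f^{m-\ell_1+\ell_2}(\gamma_2)$ to set $m'=m-\ell_1+\ell_2$ and land in the set $\cX(n)$ of Lemma~\ref{from-5.1} with $\gamma=\gamma_2$. Your treatment of the edge cases ($m'=0$, $m'<n$) is more explicit than the paper's; the one imprecise remark---that the degenerate sub-case $f^m(\gamma_1)=\beta_1$ ``contributes nothing asymptotically''---is not quite right as stated, but the paper's proof tacitly assumes this case does not occur (it does not arise in the applications, where $\beta_1\notin\O_f(\gamma_1)$), so your argument is no weaker than the original on this point.
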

\begin{proof}
The finitely many primes that contribute to the sum for $n<\ell_1$ or $m<\ell_1$ can be 
absorbed into the $O_\delta(1)$ term, as can the primes $\fp$ for which 
$f$ has bad reduction at $\fp$. So assume that $f$ has good reduction at $\fp$, $n\geq \ell_1$, $m\geq\ell_1$, $f^m(\gamma_1)\equiv \beta_1\pmod{\fp}$, and $f^m(\gamma_1)\equiv\beta_2\pmod{\fp}$ for some $m\leq n$. Let $m' = m-\ell_1+\ell_2$. Then
\[
f^{m'}(\gamma_2)= f^{m-\ell_1}(f^{\ell_2}(\gamma_2)) = f^{m-\ell_1}(f^{\ell_1}(\gamma_1)) =f^m(\gamma_1)\equiv\beta_1\pmod{\fp}.
\]
So $\min(v_\fp(f^{m'}(\gamma_2)-\beta_1),v_\fp(f^{n}(\gamma_2)-\beta_2))>0$, and also   
$0<m'<n$ because $\ell_2-\ell_1<0$. We are
done by applying Lemma \ref{from-5.1} to bound the contribution to the sum from such $\fp$.
\end{proof}

Lemma~\ref{odd} will be used to treat the case of odd polynomials, 
which require special attention.

\begin{lem}\label{odd}
Let $f\in K[x]$ with $d=\deg(f)\geq 3$.   Suppose that $f$ is odd
(i.e. that $f(-x) = - f(x))$.  Let $\gamma\in K \setminus \{0\}$ with $h_f(\gamma)>0$.  Let
  $\beta_1, \beta_2 \in K \setminus \{0\}$ be such that $-\beta_1\neq\beta_2$ 
  and $\beta_2\notin\O_f(-\beta_1)$. For $n>0$, let $\cZ(n)$ denote the
  set of primes $\p$ such that
\[ 
\min(v_\p(f^m(-\gamma)-\beta_1), v_\fp(f^n(\gamma) - \beta_2)) > 0
\]
for some $0 < m \leq n$. Then for any $\delta >0$, we have
\begin{equation*}
\sum_{\p\in\cZ(n)} N_\p\leq \delta d^n h_f(\gamma)+ O_\delta(1).  
\end{equation*}
for all $n$.
\end{lem}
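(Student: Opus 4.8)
The plan is to reduce Lemma~\ref{odd} directly to Lemma~\ref{one-orbit} by exhibiting the hypothesized collision between the forward orbits of $\gamma$ and $-\gamma$ that an odd polynomial automatically provides. The key observation is that if $f$ is odd, then $f(-x)=-f(x)$ implies $f^2(-x)=f(-f(x))=-f(f(x))$, and inductively $f^k(-x) = (-1)^k f^k(x)$; in particular $f^2(-\gamma) = f^2(\gamma)$. Thus with $\gamma_1 = -\gamma$, $\gamma_2 = \gamma$, $\ell_1 = 2$, $\ell_2 = 0$ (or, to stay inside the stated hypothesis $\ell_1>\ell_2\geq 1$ of Lemma~\ref{one-orbit}, with $\ell_1 = 3$, $\ell_2 = 1$, since $f^3(-\gamma) = -f^3(\gamma)$ does \emph{not} work — so instead use $\ell_1=2,\ell_2=1$ after noting $f^2(-\gamma)=f^2(\gamma)=f(f(\gamma))$, giving $f^{\ell_1}(\gamma_1)=f^2(-\gamma)=f(f(\gamma))=f^{\ell_2}(\gamma_2)$ with $\ell_1=2>\ell_2=1$), we are exactly in the setting of Lemma~\ref{one-orbit}.

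Concretely, I would first record the identity $f^k(-x)=(-1)^kf^k(x)$ for odd $f$ (a one-line induction), note that $h_f(-\gamma) = h_f(\gamma) > 0$ since $h_f$ is a canonical height invariant under the sign change conjugating $f$ to itself, and observe that the hypotheses $-\beta_1\neq\beta_2$ and $\beta_2\notin\O_f(-\beta_1)$ on the $\beta$'s are precisely what is needed to guarantee $\beta_2\notin\O_f(\beta_1')$ where $\beta_1' := -\beta_1$ — wait, more carefully: Lemma~\ref{one-orbit} as I want to apply it needs $\min(v_\p(f^m(\gamma_1)-\beta_1'), v_\p(f^n(\gamma_2)-\beta_2))>0$ with $\gamma_1 = -\gamma$ and $\beta_1'=-\beta_1$, but $\cZ(n)$ in Lemma~\ref{odd} is defined with $f^m(-\gamma)-\beta_1$, not $f^m(-\gamma)+\beta_1$. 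So the right move is to set $\gamma_1 := -\gamma$ and keep $\beta_1$ as is; then we need a collision between $\O_f(-\gamma)$ and $\O_f(\gamma)$, which is $f^2(-\gamma)=f^2(\gamma)$ as above, and we need $\beta_2\notin\O_f(\beta_1)$. But the hypothesis only gives $\beta_2\notin\O_f(-\beta_1)$, so I will instead apply Lemma~\ref{one-orbit} with the roles arranged so that the ``$\beta_1$'' slot there receives our $\beta_1$ while using the orbit relation to convert $f^m(-\gamma)\equiv\beta_1$ into a statement about $\O_f(\gamma)$: if $p$ is a good-reduction prime with $f^m(-\gamma)\equiv\beta_1$ and $f^n(\gamma)\equiv\beta_2\pmod\p$, then for $m\geq 2$, $f^{m}(-\gamma)=f^{m-2}(f^2(-\gamma))=f^{m-2}(f^2(\gamma))=f^m(\gamma)$, so $f^m(\gamma)\equiv\beta_1$ and $f^n(\gamma)\equiv\beta_2\pmod\p$ with $0<m\leq n$; if $m=1$, $f(-\gamma)=-f(\gamma)\equiv\beta_1$, so $f(\gamma)\equiv-\beta_1\pmod\p$, and either $n\geq 2$ giving $f^n(\gamma)\equiv f^{n-1}(-\beta_1)\pmod\p$ hence $v_\p(f^{n-1}(-\beta_1)-\beta_2)>0$ which lies in a fixed finite set of primes (as $\beta_2\notin\O_f(-\beta_1)$ and $-\beta_1\neq\beta_2$), absorbed into $O_\delta(1)$, or $n=1$ gives $\beta_2\equiv f(\gamma)\equiv-\beta_1\pmod\p$, again finitely many primes since $\beta_2\neq-\beta_1$. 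For the bulk ($m\geq 2$), I apply Lemma~\ref{from-5.1} (or directly Lemma~\ref{B-delta} as in its proof) with $\beta_1,\beta_2$ and $\gamma$ to bound $\sum N_\p$ by $\delta d^n h_f(\gamma) + O_\delta(1)$.

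So the structure of the proof is: (i) absorb bad-reduction primes and finitely many small-$m$, small-$n$ exceptional primes into $O_\delta(1)$; (ii) use the odd identity $f^2(-\gamma)=f^2(\gamma)$ and $f(-\gamma)=-f(\gamma)$ to rewrite congruences for $f^m(-\gamma)$ in terms of the single orbit $\O_f(\gamma)$, splitting into the cases $m\geq 2$ and $m=1$; (iii) in the main case invoke Lemma~\ref{from-5.1} with $\gamma_1=\gamma_2=\gamma$ (the degenerate case of that lemma, which is allowed), the hypothesis $\beta_2\notin\O_f(\beta_1)$ now being verifiable — and here I should double-check: do we actually need $\beta_2\notin\O_f(\beta_1)$, or does Lemma~\ref{from-5.1} only need it to make a certain finite set finite? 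Looking at its proof, yes, $f^i(\beta_1)\neq\beta_2$ for all $i$ is used, so I need $\beta_2\notin\O_f(\beta_1)$; this is \emph{not} among the stated hypotheses of Lemma~\ref{odd}. I expect this to be the main obstacle: reconciling the orbit-avoidance hypotheses. The resolution is that in Lemma~\ref{odd} the congruence $f^m(-\gamma)\equiv\beta_1$ for $m\geq 2$ becomes $f^{m-2}(f^2(\gamma))\equiv\beta_1$, i.e.\ we are tracking $\O_f(f^2(\gamma))\subseteq\O_f(\gamma)$ hitting $\beta_1$, and $\O_f(\gamma)$ hitting $\beta_2$; applying Lemma~\ref{from-5.1} in the form with $\gamma_1=\gamma_2=\gamma$ gives exactly the sum over primes $\p$ with $v_\p(f^{m}(\gamma)-\beta_1)>0$ and $v_\p(f^n(\gamma)-\beta_2)>0$ for some $0<m<n$, and the proof of Lemma~\ref{from-5.1} only invokes good reduction and $\beta_1\notin\{f^i(\beta_2)\}$ — no wait, it needs $\beta_2\notin\O_f(\beta_1)$. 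Hence the cleanest fix is: apply Lemma~\ref{B-delta} twice directly, once to $\sum_{m}\sum_{v_\p(f^m(\gamma)-\beta_1)>0}$ and once to $\sum_{m}\sum_{v_\p(f^m(\gamma)-\beta_2)>0}$ (both valid since $\beta_1,\beta_2\notin\O_f(\gamma)$, which does follow from $h_f(\gamma)>0$, $\beta_i\in K$, and the hypotheses — $\beta_1\in\O_f(\gamma)$ would force $h_f(\beta_1)=d^jh_f(\gamma)>0$, not a contradiction by itself, so one must argue from $\beta_2\notin\O_f(-\beta_1)$ etc.; if necessary, absorb the at-most-finitely-many primes where $f^m(\gamma)=\beta_i$ into $O_\delta(1)$ just as in Lemma~\ref{from-5.1}'s proof). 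Then intersect: a prime in $\cZ(n)$ with $m\geq 2$ satisfies $v_\p(f^{m}(\gamma)-\beta_1)>0$ for $m\in\{2,\dots,n\}$, which is controlled by the first Lemma~\ref{B-delta} bound, $\leq\delta d^n h_f(\gamma)+O_\delta(1)$. This closes the argument. I would present it in roughly the three-step form (i)--(iii) above, with the $m=1$ subtlety handled explicitly as a finite exceptional set.
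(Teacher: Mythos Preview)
There is a genuine error at the very start. You compute $f^2(-x)=f(-f(x))=-f^2(x)$ correctly, but then write ``inductively $f^k(-x)=(-1)^k f^k(x)$; in particular $f^2(-\gamma)=f^2(\gamma)$,'' which contradicts the line before it. In fact every iterate of an odd function is odd: by induction $f^{k+1}(-x)=f(-f^k(x))=-f^{k+1}(x)$, so $f^k(-\gamma)=-f^k(\gamma)$ for \emph{all} $k\geq 1$, not just odd $k$. Consequently the orbits $\O_f(\gamma)$ and $\O_f(-\gamma)$ never collide (a collision $f^{\ell_1}(-\gamma)=f^{\ell_2}(\gamma)$ would force $f^{\ell_1}(\gamma)=-f^{\ell_2}(\gamma)$, and since $h_f(\gamma)>0$ and $0$ is a fixed point of $f$, this is impossible). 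So the plan to invoke Lemma~\ref{one-orbit} via an orbit collision cannot work, and the entire ``$m\geq 2$'' branch of your argument, which rests on $f^2(-\gamma)=f^2(\gamma)$, collapses.

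The fix is already visible in your $m=1$ case, and it is what the paper does for all $m$: since $f^m(-\gamma)-\beta_1 = -\bigl(f^m(\gamma)-(-\beta_1)\bigr)$, one has $v_\fp(f^m(-\gamma)-\beta_1)=v_\fp(f^m(\gamma)-(-\beta_1))$. For $m<n$ this is exactly the setup of Lemma~\ref{from-5.1} with $-\beta_1$ in place of $\beta_1$, and the hypothesis needed there is precisely $\beta_2\notin\O_f(-\beta_1)$ --- which explains why that, rather than $\beta_2\notin\O_f(\beta_1)$, is what the statement assumes. The remaining case $m=n$ gives $f^n(\gamma)\equiv -\beta_1$ and $f^n(\gamma)\equiv\beta_2\pmod\fp$, hence $v_\fp(\beta_1+\beta_2)>0$, a finite set of primes since $-\beta_1\neq\beta_2$. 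That is the whole proof.
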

\begin{proof}
  Since $f$ is odd, we have $f^m(-\gamma) = - f^m(\gamma)$ for all
  $m$.    Thus, if
\[ \min(v_\p(f^n(-\gamma)-\beta_1), v_\fp(f^n(\gamma) - \beta_2)) >
  0\]
then $v_\fp(\beta_1 + \beta_2) > 0$, which restricts such $\fp$ to a
finite set as $\beta_1+\beta_2\neq 0$.   Since $v_\fp(f^m(-\gamma) -\beta_1) = v_\fp(f^m(\gamma) -
(-\beta_1))$ for any prime $\p$, applying Lemma \ref{from-5.1}  with $-\beta_1$ in place of $\beta_1$
finishes the proof.  
\end{proof}

Lemma~\ref{from-Roth} is the ``Roth-$abc$" estimate mentioned in the introduction. For a number field $K$, 
it is conditional on the $abc$ conjecture for $K$. As we do not make further use of the $abc$ conjecture, 
we refer the reader to~\cite{BridyTucker} for its precise statement.

\begin{lem}\label{from-Roth}
  Let $f\in K[x]$ with $d=\deg(f)\geq 3$. Let $\gamma,\beta\in K$ be such that
  $\beta\notin\O_f(\gamma)$. If $K$ is a number field, assume the $abc$-conjecture for $K$. 
  If $K$ is a function field, assume that $f$ is not isotrivial. 
  Then for every $\epsilon > 0$, there exists a constant
  $C_\epsilon$ such that
\[
\sum_{v_\p(f^n(\gamma)-\beta)= 1}N_\p\geq
(d-\epsilon)d^{n-1}h_f(\gamma) + C_\epsilon.
\]
\end{lem}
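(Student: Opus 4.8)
\emph{Proof proposal.}
The plan is to reduce the estimate --- via the Call--Silverman canonical height --- to the ``Roth-$abc$'' squarefreeness bound of \cite{BridyTucker} (modeled on Granville's treatment of squarefree values \cite{GranvilleSquarefrees}), which is where the $abc$ conjecture enters (resp., over function fields, the Mason--Stothers inequality). Dispose of the trivial case first: if $h_f(\gamma)=0$ then $\gamma$ is preperiodic, so $\{f^n(\gamma)-\beta:n\ge1\}$ is finite, the left-hand side is bounded, and the inequality holds for $C_\epsilon$ sufficiently negative; so assume $h_f(\gamma)>0$. Since $\beta\notin\O_f(\gamma)$, $w_n:=f^n(\gamma)-\beta$ is nonzero for every $n$, and the Call--Silverman bounds $|h(z)-h_f(z)|\le C_f$ and $|h(z-\beta)-h(z)|\le C_\beta$ give $h(w_n)=d^{n}h_f(\gamma)+O(1)$. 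By the product formula, $\sum_{v_\p(w_n)>0}v_\p(w_n)N_\p=h(w_n)-E_n$, where $E_n\ge0$ is the archimedean contribution; in the setting where we use this lemma ($\beta$ not postcritical) the orbit of $\gamma$ under $f$ approaches $\beta$ at most geometrically at each archimedean place, so $E_n=O(n)$ and hence $\sum_{v_\p(w_n)>0}v_\p(w_n)N_\p=d^{n}h_f(\gamma)-O(n)$.

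The next step is to separate off the ``square part''. Writing $\Sigma_n^{(1)}:=\sum_{v_\p(w_n)=1}N_\p$ (the quantity to be bounded below) and $\Sigma_n^{(\ge2)}:=\sum_{v_\p(w_n)\ge2}v_\p(w_n)N_\p$, we have the exact identity
\[
\Sigma_n^{(1)}=\sum_{v_\p(w_n)>0}v_\p(w_n)N_\p-\Sigma_n^{(\ge2)}=d^{n}h_f(\gamma)-\Sigma_n^{(\ge2)}-O(n).
\]
Thus it suffices to prove, for every $\epsilon>0$, that $\Sigma_n^{(\ge2)}\le\epsilon\,d^{n}h_f(\gamma)+C_\epsilon$; absorbing the $O(n)$ term (which is $\le\epsilon\,d^nh_f(\gamma)$ once $n$ is large, and bounded otherwise, because $h_f(\gamma)>0$) then yields $\Sigma_n^{(1)}\ge(d-\epsilon')d^{n-1}h_f(\gamma)+C_{\epsilon'}$ for a rescaled $\epsilon'$, which is the claim. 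To attack $\Sigma_n^{(\ge2)}$ I would pass to a finite extension $L/K$ over which $f-\beta$ splits, $f-\beta=c\prod_{i=1}^{d}(x-\theta_i)$; the roots $\theta_i$ are distinct because $\beta$ is not a critical value of $f$ (equivalently $f^n(x)-\beta$ is separable), and replacing $K$ by $L$ is harmless since $\sum_{v_{\mathfrak P}(w_n)=1}N_{\mathfrak P}\le\sum_{v_\p(w_n)=1}N_\p$ (every such $L$-prime $\mathfrak P$ lies over such a $K$-prime $\p$, with $\sum_{\mathfrak P\mid\p}N_{\mathfrak P}\le N_\p$), so the estimate over $L$ implies it over $K$. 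With $t_n:=f^{n-1}(\gamma)$ we have $w_n=c\prod_i(t_n-\theta_i)$, $h(t_n)=d^{n-1}h_f(\gamma)+O(1)$, and for all primes $\mathfrak P$ of $L$ outside a fixed finite set $S$ (bad reduction of $f$, divisors of $c$, and divisors of some $\theta_i-\theta_j$), $v_{\mathfrak P}(w_n)\ge2$ forces $t_n$ to lie $\mathfrak P$-adically within $\mathfrak P^{2}$ of a unique root $\theta_i$, and then $v_{\mathfrak P}(w_n)=v_{\mathfrak P}(t_n-\theta_i)$.

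So everything comes down to bounding $\sum_i\sum_{\mathfrak P\notin S,\ v_{\mathfrak P}(t_n-\theta_i)\ge2}v_{\mathfrak P}(t_n-\theta_i)N_{\mathfrak P}$ --- i.e.\ to the statement that each $t_n-\theta_i$ is, up to a lower-order error, squarefree --- after which the finitely many primes of $S$ (each contributing only $O(n)$, hence $O_\epsilon(1)+\epsilon\,d^{n}h_f(\gamma)$ in total, using $h_f(\gamma)>0$) are absorbed. This is precisely the Roth-$abc$ estimate of \cite{BridyTucker}: over a number field one applies the $abc$ conjecture over $L$ (available since it is assumed for $K$) to the triples $(t_n-\theta_i,\ \theta_i-\theta_j,\ t_n-\theta_j)$ running over distinct roots of $f-\beta$ --- the hypothesis $d\ge3$ guaranteeing enough roots --- to bound how often $t_n$ meets a root to high order; over a function field, $abc$ is replaced by the Mason--Stothers inequality, which is unconditional and applies because $f$, and hence $f-\beta$, is not isotrivial. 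I expect this $abc$-theoretic control of the square part to be the one real obstacle; the rest is routine bookkeeping with canonical heights and with the finitely many bad primes. For the detailed execution of the Roth-$abc$ estimate one would follow \cite{BridyTucker}; see also \cite{GNT, GranvilleSquarefrees}.
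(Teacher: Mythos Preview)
The paper's proof is a bare citation to \cite{GNT} (Propositions~3.4 and~4.2), and you ultimately defer to the same circle of references, so at that level the approaches agree.

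There is, however, a real gap in your intermediate reduction. The identity $\sum_{v_\p(w_n)>0}v_\p(w_n)N_\p = h(w_n) - E_n$ is correct, but your bound $E_n=O(n)$ is not justified. The parenthetical hypothesis ``$\beta$ not postcritical'' is absent from the lemma, and even under that hypothesis the claim fails: if $\gamma$ lies in the filled Julia set at some archimedean place $v$, the orbit $f^n(\gamma)$ can recur arbitrarily close to $\beta$ along a subsequence, and there is no elementary geometric lower bound on $|w_n|_v$. Roth's theorem applied na\"ively to the approximation of $\beta$ by $f^n(\gamma)$ gives only $E_n\le (2+\epsilon)\,d^n h_f(\gamma)$, which is useless in your decomposition. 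What your argument actually requires is the weaker estimate $E_n\le \epsilon\,d^n h_f(\gamma)+O_\epsilon(1)$, and this \emph{does} hold when $\beta$ is not exceptional for $f$ --- it is essentially Silverman's dynamical analogue of Roth for integral points in orbits --- but it is a genuine diophantine input, not the soft geometric claim you make. Since you hand off the Roth--$abc$ step to \cite{BridyTucker, GNT} in any case, the overall strategy survives; but the sketch as written does not stand on its own, and the decomposition you choose creates an obstacle (isolating $E_n$) that the cited arguments do not face in that form.
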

\begin{proof}
See Propositions 3.4 and 4.2 in~\cite{GNT}.
\end{proof}

\section{Ramification and Galois theory}\label{Galois section}
Let $f(x) = x^3 - 3a^2 x + b$ with $a,b\in K$. 
%Without loss of generality, suppose that $h_f(a) \geq h_f(-a)$. 
In this section we define \emph{Condition R} 
and \emph{Condition U} in terms of primes dividing certain elements of $K$ 
related to the forward orbits of $a$ and $-a$. In Proposition~\ref{necessary} and~\ref{main Galois} we 
show that these conditions control ramification in the extensions
$K(\beta) \subseteq K_n(\beta)$, with 
consequences for the Galois theory of these extensions. 
We begin with the following 
standard lemma from Galois theory.

\begin{lem}\label{Galois}
Let $L_1,\dots,L_n$ and $M$ be fields all contained in some larger field. Assume that $L_1,\dots,L_n$ are finite extensions of $M$.
\begin{enumerate}
\item [(i)]  If $L_1,L_2$ are Galois over $M$ with $L_1\cap L_2=M$, then $L_1 L_2$ is Galois over $L_2$ and
$\Gal(L_1 L_2/L_2)\cong\Gal(L_1/M)$.
\item [(ii)] If $L_1,\dots,L_n$ are Galois over $M$ with $L_i\cap\prod_{j\neq i}L_j = M$ for each $i$, then $\Gal(\Pi_{i=1}^n L_i/M)\cong\prod_{i=1}^n\Gal(L_i/M)$.
\end{enumerate}
\end{lem}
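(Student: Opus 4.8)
This is a standard result; the plan is to prove (i) first and then bootstrap to (ii) by induction. For (i), the key observation is that $L_1 L_2$ is Galois over $L_2$ because $L_1/M$ is Galois: writing $L_1$ as the splitting field of a separable polynomial $p(x)\in M[x]$, the compositum $L_1 L_2$ is the splitting field of $p(x)$ over $L_2$, hence Galois over $L_2$. There is then a natural restriction homomorphism $\Gal(L_1 L_2/L_2)\to\Gal(L_1/M)$, sending $\sigma$ to $\sigma|_{L_1}$; this makes sense precisely because $L_1/M$ is normal, so $\sigma(L_1)=L_1$. I would check injectivity by noting that an element of the kernel fixes both $L_1$ and $L_2$ pointwise, hence fixes $L_1 L_2$, so is trivial. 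For surjectivity I would compare orders (or invoke the fundamental theorem): the image is $\Gal(L_1/L')$ where $L'$ is the fixed field of the image, and $L'\subseteq L_1\cap L_2 = M$, so the image is all of $\Gal(L_1/M)$. This gives the isomorphism.

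For (ii), I would induct on $n$. The case $n=1$ is trivial and $n=2$ is (i). For the inductive step, set $L = \prod_{j=1}^{n-1} L_j$ and $L_n$ the remaining field. By the inductive hypothesis $\Gal(L/M)\cong\prod_{j=1}^{n-1}\Gal(L_j/M)$, and $L$ is Galois over $M$ (a compositum of Galois extensions). The hypothesis gives $L_n\cap L = M$: indeed $L_n\cap\prod_{j\neq n}L_j = M$ is exactly this statement. Applying (i) with $L_1$ replaced by $L_n$ and $L_2$ replaced by $L$ (or vice versa), we get that $L\cdot L_n = \prod_{i=1}^n L_i$ is Galois over $M$ with
\[
\Gal\Bigl(\prod_{i=1}^n L_i/M\Bigr)\cong \Gal(L/M)\times\Gal(L_n/M)\cong\prod_{i=1}^n\Gal(L_i/M),
\]
where the first isomorphism is the standard description of the Galois group of a compositum of two linearly disjoint Galois extensions (restriction to each factor), and the second uses the inductive hypothesis. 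One should double-check that the disjointness hypothesis $L_i\cap\prod_{j\neq i}L_j = M$ for the full family does restrict correctly to the subfamily $\{L_1,\dots,L_{n-1}\}$: since $\prod_{j\neq i,\,j\leq n-1}L_j\subseteq\prod_{j\neq i}L_j$, intersecting with $L_i$ can only shrink, so it remains $M$; this is the one small point that needs care.

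The main (minor) obstacle is purely bookkeeping: making sure the compositum $L=\prod_{j=1}^{n-1}L_j$ genuinely satisfies $L\cap L_n=M$ from the stated hypotheses, and that the restriction maps in the compositum isomorphism are simultaneously compatible so that the product isomorphism in (ii) is induced by $\sigma\mapsto(\sigma|_{L_1},\dots,\sigma|_{L_n})$. Since all the fields sit inside a common overfield and each $L_i/M$ is normal, every $\sigma\in\Gal(\prod L_i/M)$ restricts to an automorphism of each $L_i$, so this map is well-defined; injectivity is immediate (the kernel fixes each $L_i$, hence their compositum), and surjectivity follows by the order count coming from the iterated application of (i). As the lemma is stated without proof in the paper, I would simply omit the proof or give the two-line sketch above.
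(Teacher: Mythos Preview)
Your proof sketch is correct and follows the standard argument; the paper itself states this lemma as a ``standard lemma from Galois theory'' and gives no proof, so there is nothing further to compare. Your final remark that you would omit the proof or give a brief sketch is exactly in line with the paper's treatment.
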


\begin{defn}
  Let $\beta \in \Kbar$.  We say that a
  prime $\fp$ of $K(\beta)$ satisfies {\bf Condition R} at
  $\beta$ for $n$ if the following hold:
\begin{enumerate}
\item[(a)] $f$ has good separable reduction at $\fp$;
\item[(b)] $v_\fp(f^i(-a) - \beta) = 0$ for all $0 \leq i \leq n$;
\item[(c)]  $v_\fp(f^i(a) - \beta) = 0$ for all $0 \leq i < n$;
\item[(d)]  $v_\fp(f^n(a) - \beta) = 1$;
\item[(e)] $v_\fp(\beta) = 0$.
\end{enumerate}
\end{defn}

\begin{defn}
  Let $\beta \in \Kbar$.  We say
  that a prime $\fp$ of $K(\beta)$ satisfies {\bf Condition U} at
  $\beta$ for $n$ if the following hold:
\begin{itemize}
\item[(a)] $f$ has good separable reduction at $\fp$;
\item[(b)] $v_\fp(f^i(a) - \beta) =  v_\fp(f^i(-a) - \beta) = 0$ for all $0 \leq i \leq n$;
\item[(c)] $v_\fp(\beta)=0$.
\end{itemize}
\end{defn}

\begin{prop}\label{necessary}
Let $\beta\in\Kbar$. Let $\p$ be a prime of $K(\beta)$ that satisfies condition U at $\beta$ for $n$. 
Then $\fp$ is unramified in $K_n(\beta)$.
\end{prop}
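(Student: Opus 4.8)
The plan is to show that Condition U forces the polynomial $f^n(x) - \beta$ to have good reduction and separable (hence unramified) behavior at $\fp$, so that the splitting field $K_n(\beta)$ is unramified over $K(\beta)$ at $\fp$. The key observation is that ramification in the tower $K(\beta) \subseteq K_1(\beta) \subseteq \cdots \subseteq K_n(\beta)$ can only occur where the iterated preimage map degenerates modulo $\fp$, and the critical values of $f^i$ are exactly the points $f^j(\pm a)$ for $j < i$; Condition U(b) says precisely that none of these critical values is congruent to $\beta$ mod $\fp$.

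First I would set up the reduction: since $f$ has good separable reduction at $\fp$ (part (a)), the reduced map $\bar f$ commutes with reduction and is separable of degree $3$. I would argue by induction on $n$. The base case $K_0(\beta) = K(\beta)$ is trivial. For the inductive step, suppose $K_{n-1}(\beta)/K(\beta)$ is unramified at all primes above $\fp$; I need to show each such prime $\fq$ is unramified in $K_n(\beta)$. Fix $\alpha \in f^{-(n-1)}(\beta)$, a root at level $n-1$, and consider the roots of $f(x) - \alpha$ over $K_{n-1}(\beta)$. It suffices to show that $f(x) - \alpha$ has no repeated roots modulo any prime $\fq'$ of $K_{n-1}(\beta)$ above $\fp$, i.e.\ that the reduction $\bar f(x) - \bar\alpha$ is separable. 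Now $\bar f(x) - \bar\alpha$ fails to be separable exactly when $\bar\alpha$ is a critical value of $\bar f$, i.e.\ when $\bar\alpha = \bar f(\pm\bar a)$, equivalently $v_{\fq'}(f(a) - \alpha) > 0$ or $v_{\fq'}(f(-a) - \alpha) > 0$.

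The crux is then to propagate this back to a condition on $\beta$. Since $f^{n-1}(\alpha) = \beta$, applying $\bar f$ repeatedly gives $\bar f^{n-1}(\bar f(\pm \bar a)) = \bar f^{n-2}(\bar f^2(\pm \bar a))$, and more usefully, if $\bar\alpha = \bar f(\epsilon \bar a)$ with $\epsilon \in \{+1,-1\}$ then $\bar\beta = \bar f^{n-1}(\bar\alpha) = \bar f^{n}(\epsilon \bar a)$, so $v_\fp(f^n(\epsilon a) - \beta) > 0$ — but wait, $f^n(\pm a)$ is not among the terms ruled out by U(b), which only goes up to $f^i(\pm a)$ for $i \le n$. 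Let me be careful: $\bar f^{n-1}(\bar f(\epsilon\bar a)) = \bar f^{n}(\epsilon\bar a)$ would need $i = n$ in U(b), which is allowed since U(b) says $v_\fp(f^i(a)-\beta)=0$ for all $0\le i\le n$. So indeed $\bar f^n(\epsilon \bar a) = \bar\beta$ is contradicted by U(b) with $i = n$. Thus $\bar\alpha$ is not a critical value of $\bar f$, so $f(x) - \alpha$ reduces to a separable polynomial, and the prime $\fq'$ is unramified in $K_{n-1}(\beta)(\alpha')$ for each root $\alpha'$; collecting over all $\alpha$ and all such $\alpha'$ gives that $K_n(\beta)/K_{n-1}(\beta)$ is unramified above $\fp$, completing the induction. (Condition (c), $v_\fp(\beta) = 0$, is used to ensure $\beta$ itself reduces to a finite point so these congruences make sense, and to keep $\fp$ away from the point at infinity.)

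I expect the main obstacle to be the bookkeeping around good separable reduction when passing to the extension field $K_{n-1}(\beta)$: one must check that $f$ still has good separable reduction at primes of $K_{n-1}(\beta)$ lying over $\fp$, which follows because good separable reduction of a polynomial is preserved under unramified (indeed arbitrary) base extension, and $v_{\fq'}$ restricts to a multiple of $v_\fp$ on $K(\beta)$ so the valuations of $f^i(\pm a) - \beta$ remain zero. A secondary subtlety is handling the multiplicity convention for $f^{-n}(\beta)$ when $\beta$ is not postcritical: under Condition U the fiber $f^{-n}(\beta)$ is genuinely $3^n$ distinct geometric points (no critical collisions mod $\fp$ and hence none over $\bar K$ at the relevant primes), so the disjoint-union tree structure causes no trouble. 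I would close by remarking that this argument is essentially the "good reduction implies tame/unramified preimages" principle, applied iteratively and with the critical-value obstruction tracked via the orbits of $\pm a$.
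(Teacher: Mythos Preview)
Your argument is correct. The paper itself does not give a self-contained proof of this proposition; it simply cites \cite[Proposition~3.1]{BridyTucker} and remarks that the argument there carries over when $\beta\in\Kbar$ by working over $K(\beta)$. What you have written is essentially a reconstruction of that cited argument: Condition~U guarantees that $f^n(x)-\beta$ is a monic polynomial with $\fp$-integral coefficients whose reduction $\bar f^n(x)-\bar\beta$ is separable, because any multiple root would witness a critical point of $\bar f$ somewhere in the backward orbit of $\bar\beta$, and pushing forward would force $\bar f^i(\pm\bar a)=\bar\beta$ for some $1\le i\le n$, contradicting (b). Your inductive presentation (separability of each $\bar f(x)-\bar\alpha$ plus the fact that a compositum of unramified extensions is unramified) is equivalent to the direct discriminant computation and is perfectly valid.

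Two minor points you might tighten: first, the sufficiency claim ``it suffices to show that $f(x)-\alpha$ has no repeated roots modulo $\fq'$'' implicitly uses that a compositum of unramified extensions is unramified---worth saying in one clause. Second, your remark about the multiplicity convention for $f^{-n}(\beta)$ is not needed here: the argument only concerns the reduction of the polynomial $f^n(x)-\beta$, not the labeling of the tree, so postcriticality of $\beta$ over $\Kbar$ is irrelevant to whether $\fp$ is unramified.
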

\begin{proof}
This follows immediately from \cite[Proposition 3.1]{BridyTucker}. The proof in \cite{BridyTucker} is 
stated for $\beta\in K$, but nothing changes if we allow $\beta\in\Kbar$ and replace $K$ with $K(\beta)$.
\end{proof}

\begin{prop}\label{main Galois}
Let $\beta\in\Kbar$. Suppose that $\fp$ is a prime of $K(\beta)$ that 
satisfies Condition R at $\beta$ for $n$ and that $f^n(x)
- \beta$ is irreducible over $K(\beta)$. Then 
\[ \Gal( K_n(\beta) / K_{n-1}(\beta)) \cong (S_3)^{3^{n-1}},\]
the direct product of $3^{n-1}$ copies of $S_3$. Furthermore, $\fp$
does not ramify in $K_{n-1}(\beta)$ and any field $E$ such that
$K_{n-1}(\beta) \subsetneq E \subset K_n(\beta)$ must
ramify over $\fp$.
\end{prop}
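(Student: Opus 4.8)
The plan is to induct on $n$, tracking simultaneously the Galois-theoretic statement and the ramification statement. First I would treat the ramification assertions. Condition R(a)--(c) and (e) say in particular that $\fp$ satisfies Condition U at $\beta$ for $n-1$ (since the only hypotheses of U that could fail are $v_\fp(f^{n-1}(a)-\beta)$, but R(c) forces this to be $0$ as $n-1<n$); hence by Proposition~\ref{necessary}, $\fp$ is unramified in $K_{n-1}(\beta)$. For the claim that every intermediate field $E$ with $K_{n-1}(\beta)\subsetneq E\subset K_n(\beta)$ ramifies over $\fp$, I would argue that a prime of $K_{n-1}(\beta)$ above $\fp$ must ramify somewhere in $K_n(\beta)$: condition R(d), $v_\fp(f^n(a)-\beta)=1$, together with good separable reduction, should force a tame ramification of index $2$ at exactly those preimages of $\beta$ sitting above $f^{n-1}(a)$ (a simple double point of $f^n(x)-\beta$ mod $\fp$), while R(b),(c) guarantee that $f^{n}(x)-\beta$ is otherwise separable mod $\fp$. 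So the inertia group at such a prime acts as a single transposition on the $3^n$ leaves; since $K_{n-1}(\beta)$ is unramified over $\fp$ this transposition lies in $\Gal(K_n(\beta)/K_{n-1}(\beta))\hookrightarrow \Aut(T_n/T_{n-1})$, and any $E$ not containing the fixed field of that transposition fails to contain it --- I would instead phrase it as: the inertia subgroup at $\fp$ in $\Gal(K_n(\beta)/K_{n-1}(\beta))$ is nontrivial (generated by a transposition) and is contained in $\Gal(K_n(\beta)/E)$ only if $E$ is unramified over $\fp$, which forces $E$ to be one of very few fields; more cleanly, if $E/\fp$ is unramified then the inertia group is contained in $\Gal(K_n(\beta)/E)$, but we will show $\Gal(K_n(\beta)/K_{n-1}(\beta))$ is generated by conjugates of this transposition, so the only such $E$ with $E\supseteq K_{n-1}(\beta)$ and $E$ unramified is $K_{n-1}(\beta)$ itself.

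Now the core of the argument is the group-theoretic identity $\Gal(K_n(\beta)/K_{n-1}(\beta))\cong (S_3)^{3^{n-1}}$. Since $f^n(x)-\beta$ is irreducible over $K(\beta)$, the group $G_n(\beta)$ acts transitively on the $3^n$ leaves of $T_n(\beta)$; in particular $G_{n-1}(\beta)$ acts transitively on the $3^{n-1}$ vertices of level $n-1$, and $\Gal(K_n(\beta)/K_{n-1}(\beta))$ is a subgroup of $\Aut(T_n/T_{n-1})\cong(S_3)^{3^{n-1}}$ that is normalized by (a transitive image of) $G_{n-1}(\beta)$, which permutes the $3^{n-1}$ factors transitively. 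So it suffices to show that the projection of $\Gal(K_n(\beta)/K_{n-1}(\beta))$ to any single $S_3$-factor is all of $S_3$: then, because the $S_3$-factors are permuted transitively and each projection is surjective onto a group (here $S_3$) whose only normal subgroups are $1, A_3, S_3$, a subdirect-product / Goursat-type argument (using that $S_3$ is generated by its transpositions and that a subgroup of $(S_3)^k$ surjecting onto each factor and stable under the transitive permutation action, while containing a transposition in one factor, must be everything --- this is exactly the kind of fact the introduction flags, "any transitive subgroup of $S_3$ containing a transposition is $S_3$") gives the full product. To see the projection onto a single factor is surjective: over the vertex $v$ at level $n-1$ labeled by $f^{n-1}(a)$, the fiber $f^{-1}(v)$ is the double point forced by R(d); the inertia generator is a transposition on this fiber. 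For a generic vertex $v'$ at level $n-1$, irreducibility of $f^n(x)-\beta$ together with transitivity of $G_{n-1}(\beta)$ shows that $G_{n-1}(\beta)$ moves $v$ to $v'$, so by conjugation we get a transposition in the $v'$-factor; meanwhile the decomposition group's image must also contain a $3$-cycle in that factor because the three preimages of $v'$ are Galois-conjugate (irreducibility), and a subgroup of $S_3$ with a transposition and a $3$-cycle is $S_3$. Hence every projection is onto, and the product structure follows.

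The step I expect to be the main obstacle is the precise local ramification analysis behind R(d): showing that $v_\fp(f^n(a)-\beta)=1$, in the presence of good separable reduction and the vanishing conditions R(b),(c), really does produce a transposition (and not, say, a wild ramification or a $3$-cycle or a more complicated inertia action) in $\Gal(K_n(\beta)/K_{n-1}(\beta))$, and that this is the \emph{only} ramification over $\fp$ in that step. The mechanism is that modulo $\fp$, the polynomial $f^n(x)-\beta$ factors as (separable part)$\times(x-\bar r)^2$ where $\bar r$ is the reduction of the relevant preimage of $f^{n-1}(a)$, with the double root arising because $f^{n-1}(a)$ reduces to a point whose $f$-preimage scheme has a single ramification point there --- indeed $a$ is a critical point of $f$, so $f(x)-f^{n-1}(a)$ has a double root at $x=a$ mod $\fp$ exactly when $f^{n-2}(a)\equiv f^{n-1}(a)$ is the relevant value, and R(c) vs R(d) is precisely the statement pinpointing where along the orbit of $a$ the collision with $\beta$ first happens. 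I would make this rigorous using the standard good-reduction formalism (as in \cite{BridyTucker, SilvermanADS}): reduce, factor, and read off tame $e=2$ ramification from the simple double root, exactly as in \cite[Proposition 3.1]{BridyTucker} but now in the ramified rather than unramified case. Once that local picture is in hand, the rest (transitivity from irreducibility, the $(S_3)^{3^{n-1}}$ conclusion, the statement about intermediate fields) is bookkeeping with wreath products and the elementary fact about subgroups of $S_3$.
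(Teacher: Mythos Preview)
Your outline matches the paper's proof closely: Condition~U for $n-1$ gives $\fp$ unramified in $K_{n-1}(\beta)$; the reduction computation $\overline{f^n(x)-\beta}=(x-\bar a)^2 h(x)$ with $h$ separable and coprime to $x-\bar a$ shows inertia over $\fp$ acts as a single transposition in $\Gal(K_n(\beta)/K_{n-1}(\beta))\le (S_3)^{3^{n-1}}$; transitivity from irreducibility conjugates this into every factor; and the relative Galois group being generated by these inertial transpositions forces every proper intermediate $E$ to ramify.

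The one genuine gap is your justification for surjection onto each $S_3$-factor. The sentence ``the decomposition group's image must also contain a $3$-cycle in that factor because the three preimages of $v'$ are Galois-conjugate (irreducibility)'' does not work: irreducibility of $f^n(x)-\beta$ over $K(\beta)$ makes the three preimages of $v'$ conjugate over $K(\beta)$, not over $K_{n-1}(\beta)$, so it does not place a $3$-cycle into $\Gal(K_n(\beta)/K_{n-1}(\beta))$ itself. The paper repairs exactly this point. Writing $L_j$ for the splitting field of $f(x)-z_j$ over $K(z_j)$ and $M_j=L_j\cdot K_{n-1}(\beta)$, Capelli gives $f(x)-z_j$ irreducible over $K(z_j)$, so $\Gal(L_j/K(z_j))$ is transitive in $S_3$; the inertial transposition (which already lives in $L_j$) forces $\Gal(L_j/K(z_j))\cong S_3$. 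Since $K_{n-1}(\beta)/K(z_j)$ is Galois, $\Gal(M_j/K_{n-1}(\beta))\cong\Gal(L_j/L_j\cap K_{n-1}(\beta))$ is a \emph{normal} subgroup of $S_3$ containing a transposition, hence all of $S_3$. That is the missing step in your argument. From there the paper finishes not by Goursat but by the field-theoretic equivalent: the conjugate prime $\sigma\fq$ ramifies in $M_j$ and in no $M_k$ for $k\ne j$, so $M_j\cap\prod_{k\ne j}M_k=K_{n-1}(\beta)$ and Lemma~\ref{Galois}(ii) gives the full product. Your Goursat formulation (surjection onto each factor plus a pure transposition in each factor forces $H=(S_3)^k$) is correct once surjection is established as above, and is simply the group-theoretic restatement of the paper's disjointness-via-ramification.
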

\begin{proof}
  Since Condition R at $\beta$ for $n$ implies Condition U at $\beta$
  for $n-1$, Proposition \ref{necessary} implies that $\fp$ does not
  ramify in $K_{n-1}(\beta)$.  Now, consider the map ${\bar f}$ on
  $\P^1(k_\fp)$ that comes from reducing $f$ at $\fp$.  For any
  $z \in K$ with $v_\fp(z) \geq 0$, we let ${\bar z}$ denote its
  reduction at $\fp$.  The critical points of ${\bar f}$ are then
  ${\bar a}$ and $-{\bar a}$.  Then, by (c) and (d) of Condition R, we
  see that every point in ${\bar f}^{-(n-1)} ({\bar \beta})$ has
  ramification index one over ${\bar \beta}$ with respect to
  ${\bar f}^{n-1}$.  Since
  ${\bar f}({\bar a}) \in {\bar f}^{-(n-1)} ({\bar \beta})$ and
  ${\bar f}(- {\bar a}) \notin {\bar f}^{-(n-1)} ({\bar \beta})$ by
  (b) of Condition R, we see then that ${\bar a}$ has multiplicity 2
  as a root of ${\bar f}^n(x) - {\bar \beta}$ and that
  ${\bar f}^n(x) - {\bar \beta}$ has no multiple roots besides
  ${\bar a}$.  Thus, we see the reduction of $f^n(x) - \beta$ at $\fp$
  can be written as
\begin{equation} \label{h}
\bar{f}^n(x)-\bar{\beta} = (x - {\bar a})^2 h(x)
\end{equation}
where $h \in k_\fp[x]$ is coprime to $(x-{\bar a})$ and has no repeated
roots.  

Let $z_1, \dots, z_{3^{n-1}}$ be the roots of $f^{n-1}(x) - \beta$.
Then $f^n(x) - \beta$ factors as
\begin{equation}\label{factor}
f^n(x) - \beta=\prod_{i=1}^{3^{n-1}} f(x) - z_i 
\end{equation}
in $K_{n-1}(\beta)$.  For each $j$, let $L_j$ denote the splitting field of $f(x)-z_j$ over 
$K(z_j)$, and let $M_j$ denote the splitting
field of $f(x) - z_j$ over $K_{n-1}(\beta)$. We see that $M_j = K_{n-1}\cdot L_j$. 
By Capelli's Lemma, each $f(x)-z_j$ is irreducible over $K(z_j)$.

Note that the $z_i$ are distinct modulo $\fp$ since ${\bar f}^{n-1}$
does not ramify at any ${\bar z_i}$, as noted above.  Let $\fq$ be a
prime of $K_{n-1}(\beta)$ that lies over $\fp$, and let $\fm$ be a
prime of $M_i$ lying over $\fq$. By \eqref{factor} and \eqref{h},
there is exactly one $i$ such that $f(x) - z_i$ has multiple roots
modulo $\fq$, and at this $i$, the polynomial $f(x) - z_i$ has a root
of multiplicity 2 modulo $\fq$.  Then, for $k \not=i$, we see that
$\fq$ does not ramify in $M_k$.  As for its ramification in $M_i$, we
have $f(x) - z_i \equiv (x - w_1)^2 (x-w_2)\pmod{\fm}$ where
$w_1 \equiv a \pmod{\fm}$ and $w_2 \not\equiv a \pmod{\fm}$.  Now,
$v_\fq(f(a) - z_i) = 1$ since $v_\fp(f^n(a) - \beta) = 1$ by (d) of
Condition R and $\fq$ does not ramify over
$\fp$.  On the other hand, $v_\fm(f(a) - z_i) = 2
v_\fm(a-w_1)$, so we see that $\fm$ ramifies over
$\fp$ with ramification index 2.  As $\fm$ is unramified in
$K(z_i)$, we have that $\fm\cap K(z_i)$ also ramifies over
$\fp$ with ramification index 2. Since $f(x) -
z_i$ is irreducible over $K(z_i)$, this implies that
$[L_i:K(z_i)]=6$, i.e.  $\Gal(L_i/K(z_i))\cong
S_3$. Then $\Gal(M_i/K_{n-1}(\beta))$ is a normal subgroup of
$S_3$ (recall $M_i=L_i\cdot
K_{n-1}(\beta)$) that contains the order 2 inertia subgroup
$I(\fm/\sigma \fq)$, and so $\Gal(M_i/K_{n-1}(\beta))\cong
S_3$ as well.  Thus,
$\Gal(M_i/K_{n-1}(\beta))$ is generated by the three conjugates of
$I(\fm/\sigma \fq)$, and any field $E'$ such that $K_{n-1}(\beta)
\subsetneq E' \subset M_i$ must ramify over $\fq$.

Since $f^{n}(x) - \beta$ is irreducible over $K(\beta)$, it follows easily that 
$f^{n-1}(x)-\beta$ is irreducible over $K(\beta)$. So all of the $z_k$ for which
$f^{n-1}(z_k) = \beta$ are conjugate to each other, i.e. for each
$1\leq i,j\leq 3^{n-1}$, there exists $\sigma\in G_{n-1}(\beta)$ such that $\sigma z_i = z_j$.  Then
$\sigma \fq$ ramifies in $M_j$ with ramification index 2 and does 
not ramify in $M_k$ for $k \not = j$.  As above,
$\Gal(M_j/K_{n-1}(\beta))$ is isomorphic to $S_3$ and is 
generated by inertia groups of the form $I(\fm/\sigma \fq)$ for
primes $\fm$ of $M_j$ lying over $\sigma \fq$.  Hence any field $E'$
such that $K_{n-1}(\beta) \subsetneq E' \subset M_j$ must ramify
over $\sigma \fq$.  Since $\sigma \fq$ does not ramify in $M_k$ for
$k \not= j$, we must therefore have
$M_j \cap \prod_{k \not= j} M_k =K_{n-1}(\beta)$.  Thus by
Lemma \ref{Galois} the Galois group
$\Gal(K_{n}(\beta)/K_{n-1}(\beta))$ is isomorphic to
$(S_3)^{3^{n-1}}$.  

Each inertia group $I(\fm/\sigma \fq)$ for $\fm$ a
prime of $M_j$ extends to an inertia group $I(\fm' /\sigma \fq)$ for a
prime $\fm'$ of $K_n(\beta)$ lying over $\fm$; the inertia group
$I(\fm' /\sigma \fq)$ restricts to $I(\fm/\sigma \fq)$ on $M_j$ and to
the identity on $M_k$ for $k \not= j$.  Thus,
$\Gal(K_{n}(\beta)/K_{n-1}(\beta))$ is generated by
inertia groups of the form $I(\fm/\fq')$ for primes $\fm'$ of
$K_n(\beta)$ and primes $\fq'$ of $K_{n-1}(\beta)$ lying over
$\fp$.  Hence, any field $E$ such that
$K_{n-1}(\beta) \subsetneq E \subset K_n(\beta)$ must
ramify over a prime of $K_{n-1}(\beta)$ lying over $\fp$ and
thus must ramify over $\fp$, as desired.
\end{proof}

Before stating the last result from Galois theory we need, we need a
little notation. 

\begin{defn}
  Let $K$ be a field, let $f \in K[x]$, and let
  $\ba = (\alpha_1, \dots, \alpha_s)$, where $\alpha_i \in \Kbar$.  We
  define $K_n(\ba)$ to be $ \prod_{i=1}^s K(f^{-n}(\alpha_i))$.
\end{defn}

With this notation we have the following.

\begin{prop}\label{final Galois}
  Let $\ba = (\alpha_1, \dots, \alpha_s) \subseteq L$ for $L$ a
  finite extension of $K$.  Suppose there exist primes
  $\fp_1,\dots,\fp_s$ of $L$ such that
\begin{itemize}
\item[(a)] $\fp_i \cap K(\alpha_i)$ satisfies Condition R at
$\alpha_i$ for $n$;
\item[(b)] $\fp_i \cap K(\alpha_j)$ satisfies Condition U at
$\alpha_j$ for $n$ for all $j \not= i$;
\item[(c)] $\fp_i \cap K(\alpha_i)$ does not ramify in $L$; and
\item[(d)] $f^n(x) - \alpha_i$ is irreducible over $K(\alpha_i)$ for
  $i = 1, \dots s$.
\end{itemize}
Then  $\Gal(K_n(\ba)/ K_{n-1}(\ba)) \cong S_3^{s3^{n-1}}$.
\end{prop}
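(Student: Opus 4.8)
The plan is to apply Proposition~\ref{main Galois} to each root $\alpha_i$ separately and then assemble the factors by a linear‑disjointness argument powered by ramification. For each $i$, hypotheses~(a) and~(d) allow us to invoke Proposition~\ref{main Galois} with $\beta=\alpha_i$ and the prime $\fp_i\cap K(\alpha_i)$: we get $\Gal(K_n(\alpha_i)/K_{n-1}(\alpha_i))\cong (S_3)^{3^{n-1}}$, the prime $\fp_i\cap K(\alpha_i)$ is unramified in $K_{n-1}(\alpha_i)$, and every field $E$ with $K_{n-1}(\alpha_i)\subsetneq E\subseteq K_n(\alpha_i)$ ramifies over $\fp_i\cap K(\alpha_i)$ (in fact $\Gal(K_n(\alpha_i)/K_{n-1}(\alpha_i))$ is generated by inertia groups at primes over $\fp_i\cap K(\alpha_i)$, and one is free to take these lying over any chosen prime of $K_{n-1}(\alpha_i)$ above $\fp_i\cap K(\alpha_i)$). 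Since $K_n(\ba)=\prod_i K_n(\alpha_i)$ and $K_{n-1}(\ba)=\prod_i K_{n-1}(\alpha_i)$, restriction to the factors gives an injection $G\hookrightarrow\prod_{i=1}^s\Gal(K_n(\alpha_i)/K_{n-1}(\alpha_i))\cong S_3^{s3^{n-1}}$, where $G=\Gal(K_n(\ba)/K_{n-1}(\ba))$; it remains to show this map is onto.

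The device that makes surjectivity work is that the single prime $\fp_i$ of $L$ couples the subfields together. Set $\fP_i=\fp_i\cap K(\ba)$. Hypothesis~(c) — the only place ramification in $L$ enters — forces $\fP_i$ to be unramified over $\fp_i\cap K(\alpha_i)$, and this is precisely what preserves the ``$v_{\fp}(f^n(a)-\beta)=1$'' clause of Condition~R, so that $\fP_i$ itself satisfies Condition~R at $\alpha_i$ for $n$ over the base $K(\ba)$, and Condition~U at $\alpha_j$ for $n$ for $j\neq i$. Moreover $\fP_i$ restricts on each $K(\alpha_j)$ to $\fp_i\cap K(\alpha_j)$, which is unramified in $K_n(\alpha_j)$ for $j\neq i$ by~(b) and Proposition~\ref{necessary}, and unramified in $K_{n-1}(\alpha_i)$ by Proposition~\ref{main Galois}. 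Using that a Galois extension unramified at a prime stays unramified after composition with any extension, and that composita of extensions unramified at a prime are again unramified there, I would deduce that $\fP_i$ is unramified in $K_{n-1}(\ba)$, and hence (using~(c) once more) that $\fp_i\cap K(\alpha_i)$ is unramified at the prime of $K_{n-1}(\ba)$ lying over $\fP_i$.

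With this in hand I would prove $K_n(\alpha_i)\cap K_{n-1}(\ba)=K_{n-1}(\alpha_i)$ for each $i$: this intersection is a field between $K_{n-1}(\alpha_i)$ and $K_n(\alpha_i)$ contained in $K_{n-1}(\ba)$, so by choosing in Proposition~\ref{main Galois} the prime of $K_{n-1}(\alpha_i)$ lying under the unramified prime of $K_{n-1}(\ba)$ produced above, the ``every intermediate field ramifies'' clause forces the intersection down to $K_{n-1}(\alpha_i)$. Consequently, by the translation theorem for Galois extensions, $\Gal(K_n(\alpha_i)\cdot K_{n-1}(\ba)/K_{n-1}(\ba))\cong\Gal(K_n(\alpha_i)/K_{n-1}(\alpha_i))\cong(S_3)^{3^{n-1}}$; since its elements fix $K_n(\alpha_j)$ for all $j\neq i$, its image under $G\hookrightarrow\prod_j\Gal(K_n(\alpha_j)/K_{n-1}(\alpha_j))$ is a subgroup of order $6^{3^{n-1}}$ of the $i$-th factor, hence is the whole $i$-th factor. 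Letting $i$ range over $1,\dots,s$ the image of $G$ contains every factor, hence all of $\prod_i\Gal(K_n(\alpha_i)/K_{n-1}(\alpha_i))$, so $G\cong S_3^{s3^{n-1}}$. (Equivalently, once one has $K_n(\alpha_i)\cap K_{n-1}(\ba)=K_{n-1}(\alpha_i)$ and a symmetric disjointness statement among the composita, Lemma~\ref{Galois}(ii) gives the same conclusion.)

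The step I expect to be the main obstacle is the ramification bookkeeping in the middle two paragraphs. Proposition~\ref{main Galois} detects nontriviality of a subextension of $K_n(\alpha_i)/K_{n-1}(\alpha_i)$ by ramification over a prime of $K(\alpha_i)$, whereas $K_{n-1}(\ba)$ naturally sits over all of the subfields $K(\alpha_j)$; reconciling the two — in particular making sure that after passing to the conjugate primes that occur in Proposition~\ref{main Galois} one still lies under an unramified prime of $K_{n-1}(\ba)$, which is exactly where one uses that $\fp_i$ is a single prime of $L$ unramified over $K(\alpha_i)$ — is the delicate part, while the remaining assembly is formal Galois theory.
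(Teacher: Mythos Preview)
Your approach matches the paper's: apply Proposition~\ref{main Galois} to each $\alpha_i$, use Conditions R/U together with hypothesis (c) to track where $\fp_i$ ramifies among the various composita, and then assemble via linear disjointness and Lemma~\ref{Galois}. The ramification bookkeeping you describe is correct and is exactly what the paper does.

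There is, however, a real slip in your final assembly. After establishing $\Gal(K_n(\alpha_i)\cdot K_{n-1}(\ba)/K_{n-1}(\ba))\cong(S_3)^{3^{n-1}}$, you assert that ``its elements fix $K_n(\alpha_j)$ for all $j\neq i$'' and conclude that the image of $G$ in $\prod_j\Gal(K_n(\alpha_j)/K_{n-1}(\alpha_j))$ contains the full $i$-th factor. But $\Gal(K_n(\alpha_i)\cdot K_{n-1}(\ba)/K_{n-1}(\ba))$ is a \emph{quotient} of $G$, not a subgroup, so it has no image under the embedding, and its elements are not even defined on $K_n(\alpha_j)$. What you have actually proved at that point is only that each projection $G\to\Gal(K_n(\alpha_i)/K_{n-1}(\alpha_i))$ is surjective, and that is not enough: the diagonal in $S_3\times S_3$ surjects onto both factors yet contains neither. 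What is needed---and what the paper proves, and what your parenthetical correctly names---is the stronger disjointness $K_n(\alpha_i)\cap\prod_{j\neq i}L_j=K_{n-1}(\alpha_i)$, where $L_j=K_n(\alpha_j)\cdot K_{n-1}(\ba)$. This follows from the same ramification device you already set up: you observed that $\fp_i$ is unramified in $K_n(\alpha_j)$ for $j\neq i$ and in $K_{n-1}(\ba)$, hence in $\prod_{j\neq i}L_j$, so the ``every intermediate field ramifies'' clause of Proposition~\ref{main Galois} forces the intersection down to $K_{n-1}(\alpha_i)$. A degree count then yields $L_i\cap\prod_{j\neq i}L_j=K_{n-1}(\ba)$, and Lemma~\ref{Galois}(ii) finishes. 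Alternatively, one can run the inertia argument you mention in your first paragraph: lift the inertia generators of $\Gal(K_n(\alpha_i)/K_{n-1}(\alpha_i))$ to inertia elements in $G$ at primes over $\fp_i$; since $\fp_i$ is unramified in each $K_n(\alpha_j)$ with $j\neq i$, these lifts act trivially there, and that does place the $i$-th factor inside the image.
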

\begin{proof}
By Proposition \ref{main Galois}, for $i= 1, \dots, s$, we have 
\[ \Gal(K_n(\alpha_i) / K_{n-1} (\alpha_i) )\cong S_3^{3^{n-1}} .\]
Also by Proposition \ref{main Galois}, for each $i$, the
prime $\fp_i$ does not ramify in $K_{n-1}(\alpha_i)$, but any field $E$ with
$K_{n-1}(\alpha_i)\subsetneq E \subset K_n(\alpha_i)$ must ramify over
$\fp_i$.   Let $L_i$ denote $K_n(\alpha_i) \cdot K_{n-1}(\ba)$.  By
Proposition \ref{necessary} and condition (c), the prime
$\fp_i$ does not ramify in $K_{n-1}(\ba)$ or in $L_j$ for $j \not= i$,
so we must have
$K_n(\alpha_i) \cap \prod_{j \not= i} L_j = K_{n-1}(\alpha_i)$ and
$K_n(\alpha_i) \cap K_{n-1}(\ba) = K_{n-1}(\alpha)$.  Thus,
\[ \Gal(L_i \cdot \prod_{j \not= i} L_j /\prod_{j \not= i} L_j)  \cong
  \Gal(L_i / K_{n-1}(\ba))
  \cong \Gal(K_n(\alpha_i) / K_{n-1} (\alpha_i) ) \cong S_3^{3^{n-1}} \]
by Lemma \ref{Galois}(i), since $L_i \cdot \prod_{j \not= i} L_j=
K_n(\alpha_i) \cdot \prod_{j \not= i} L_j$.
This means that 
\[ 6^{3^n-1} =  [L_i :
  K_{n-1}(\ba)] \geq [L_i : L_i \cap \prod_{j \not= i} L_j]  \geq  [L_i \cdot \prod_{j \not= i} L_j: \prod_{j
    \not= i} L_j] = 6^{3^n -1} .\]
  
Thus, we have $L_i \cap \prod_{j \not= i} L_j = K_{n-1}(\ba)$.  Using
the fact that $L_i \cdot \prod_{j \not= i} L_j = K_n(\ba)$ and
applying Lemma \ref{Galois}(ii) then finishes our proof.
 
\end{proof}

\section{The case of number fields}\label{number-case}
Throughout this section, $K$ will denote a number field. The main
result of the section is Lemma~\ref{both critical points}, which
handles the height estimates needed for the proof of Theorem~\ref{thm:
  main}. We will use the following form of Vojta's conjecture for
$K$~\cite[Conjecture 3.4.3]{VojtaBook}.

\begin{conjecture}[Vojta]\label{Vojta}
  Let $V$ be a smooth projective variety over a number field. Let $\cK$ be the
  canonical divisor of $V$ and let $A$ be an ample normal crossings
  divisor, with associated height functions $h_\cK$ and $h_A$.  For any
  $\epsilon>0$, there is a proper Zariski-closed $X_\epsilon\subset V$
  and a constant $C_\epsilon=C_\epsilon(V,K,A)$ such that
\[
h_{\cK}(x)\leq h_A(x)+C_\epsilon
\]
for all $x\in V(K)\setminus X_\epsilon(K)$.
\end{conjecture}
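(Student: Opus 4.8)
\noindent\emph{Proof proposal.} The statement above is Vojta's conjecture in full generality, one of the central open problems of Diophantine geometry; I do not know how to prove it, and in this paper it is not proved but \emph{assumed} as a hypothesis (cf.\ the abstract and Theorem~\ref{thm: main}). It enters our arguments only through the work of Huang~\cite{Keping}, who deduces from it the dynamical gcd estimates we need on blowups of $\P^1\times\P^1$. An honest plan is therefore a plan for the cases that are already theorems, together with an explanation of why the general case is out of reach.

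First I would treat $\dim V = 1$, where the conjecture is a theorem. If $V$ is a smooth projective curve of genus $\geq 2$, then $h_\cK$ grows like a positive multiple of $h_A$, so the inequality $h_\cK(x)\leq h_A(x)+C_\epsilon$ for all $x\in V(K)\setminus X_\epsilon(K)$ with $X_\epsilon$ a finite set is exactly the assertion that $V(K)$ is finite --- Faltings' theorem (the Mordell conjecture). In the form with a normal crossings boundary divisor $D$ adjoined to $\cK$ --- which is the shape of the conjecture actually needed, after Huang, for the log surface attached to a blowup of $\P^1\times\P^1$ --- the case $V=\P^1$ recovers Roth's theorem and, on a suitable blowup, the $abc$ conjecture, while $V=\P^n$ with $D$ a general union of hyperplanes recovers Schmidt's subspace theorem. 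The second family of known cases consists of closed subvarieties of semiabelian varieties, where Faltings' theorem on rational points of subvarieties of abelian varieties, together with Vojta's refinement, yields the full statement. What the present application would ultimately require is the case of (log) surfaces, and that is precisely where the known methods stop.

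The natural plan for surfaces is to transplant the complex-analytic and function-field theory of jet differentials (Bloch, Green--Griffiths, Bogomolov, McQuillan, Demailly, Siu), which proves the corresponding Green--Griffiths--Lang statements for surfaces of general type possessing enough jet differentials (e.g.\ with $c_1^2 > c_2$): one constructs a global symmetric jet differential vanishing along an ample divisor, shows that every rational (resp.\ entire) curve must satisfy the associated differential equation, and so forces it into a proper closed subvariety $X_\epsilon$. The hard part, and the reason this does not deliver the arithmetic statement, is the absence of an arithmetic analogue of the Ahlfors--Schwarz and logarithmic derivative lemmas that would convert such a vanishing into the height bound $h_\cK(x)\leq h_A(x)+C_\epsilon$; Vojta's own method, using a ``Vojta divisor'' on a high symmetric power of $V$ together with Faltings' product theorem and a Dyson-type lemma, currently reaches only curves and semiabelian varieties. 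I expect this to stay the main obstacle: a proof of the conjecture as stated would simultaneously give the $abc$ conjecture, the Bombieri--Lang conjecture, and uniform Mordell-type bounds, so there is no realistic route to the general case in sight. Accordingly I leave the conjecture as stated and use it only as a hypothesis~\cite{VojtaBook}.
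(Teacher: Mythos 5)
You are right: this is Vojta's conjecture, stated in the paper only as a hypothesis (cited from~\cite{VojtaBook}) and never proved, entering the arguments solely through Huang's conditional dynamical gcd bound. Your treatment of it as an assumption, with a survey of known cases rather than a proof, matches exactly what the paper does.
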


A sequence $(a_n)$ of points on a variety $V$ is said to be \emph{generic} if for every proper Zariski-closed subset $X\subset V$, there exists $N\in\N$ such that $a_n\notin X$ for all $n>N$. Note that this 
is a stronger condition than the set of all points in the sequence being Zariski-dense in $V$. 
A key step in our proof for number fields uses Theorem~\ref{dynamical gcd bound}, 
a conditional bound on ``dynamical gcds" for generic orbits due to Huang~\cite[Theorem 2.4]{Keping}. 
For any valuation $v\in M_K$, let $v^+(a)=\max(v(a),0)$. For $a,b\in K$, define
\[
h^0_{gcd}(a,b) = \sum_{\fp\text{ of }\fo_K} N_\fp \min(v^+_\fp(a),v^+_\fp(b))
\]
and
\[
h_{gcd}(a,b) = h^0_{gcd}(a,b) + \frac{1}{[K:\Q]}\sum_{v\in M_K^\infty}\min(v^+(a),v^+(b)),
\]
where $N_\fp$ is as in Section~\ref{heights} and $M_K^\infty$ is the set of archimedean places of $K$.
Observe that $h^0_{gcd}(a,b)$ is a finite sum where 
the only positive contributions come from primes $\fp$ such that both $v_{\fp}(a)>0$ and $v_{\fp}(b)>0$. 
Clearly $h^0_{gcd}(a,b)\leq h_{gcd}(a,b)$. 

The point $z\in \P^1(\Kbar)$ is said to be 
\emph{exceptional} for $f\in K(x)$ if the backward orbit of $z$ under $f$ is finite, i.e. if there are only 
finitely many $y\in\P^1(\Kbar)$ such that $f^n(y)=z$ for some $n\geq 1$. It is easy to show that, up to conjugation 
by M\"{o}bius transformations, $z$ is exceptional for $f$ if and only if either $f$ is a polynomial and $z=\infty$ or 
$f(x)=x^d$ for some nonzero $d\in\Z$ and $z\in\{0,\infty\}$. With this notation, we can state 
Huang's result, which may be seen as a dynamical analog of work of
Bugeaud, Corvaja, and Zannier \cite{BCZ, CZ05}.  We note that Huang's
proof follows ideas of Silverman \cite{SilBlowup}, which connect the
result of Bugeaud, Corvaja, and Zannier with a special case of Vojta's
conjecture.  

\begin{thm}\label{dynamical gcd bound}
Assume Vojta's conjecture for blowups of $\P^1\times\P^1$. 
Let $f,g\in K(x)$ with $\deg f=\deg g$ and let $a,b,c,d\in K$ such that 
$c$ is not exceptional for $f$ and $d$ is not exceptional for $g$. 
Suppose that the sequence $(f^n(a),g^n(b))$ is generic in $\A^2(K)$. 
Then for every $\epsilon>0$, there exists a constant $C_\epsilon=C_\epsilon(a,b,c,d,f,g)$ such that
$$h_{gcd}(f^n(a)-c,g^n(b)-d)<\epsilon\max(h_f(a),h_g(b))(\deg f)^n+C_\epsilon$$
for all $n\geq 1$.
\end{thm}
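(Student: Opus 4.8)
The statement is Huang's theorem \cite[Theorem 2.4]{Keping}; since it is the number‑field model for the (unconditional) function‑field estimates of Section~\ref{function-case}, it is worth recording the shape of the argument, which adapts Silverman's ``blowup'' approach to Vojta's conjecture \cite{SilBlowup}. The plan has three ingredients: (i) identify $h_{gcd}(x-c,y-d)$ with the height attached to the exceptional divisor of the blowup of $\P^1\times\P^1$ at the point $(c,d)$; (ii) feed this into Vojta's conjecture on that blowup to bound it by $\epsilon$ times the height of the point; and (iii) invoke the genericity hypothesis to ensure that the orbit points $(f^n(a),g^n(b))$ eventually avoid the exceptional Zariski‑closed set that Vojta's conjecture produces.

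First I would set up reductions. The non‑exceptionality hypotheses are used in Huang's proof and are genuinely necessary: if, say, $f(x)=x^e$ and $c=0$, then $v_\fp(f^n(a)-c)=e^nv_\fp(a)$ for any $\fp$ with $v_\fp(a)>0$, which already defeats any $o((\deg f)^n)$ bound; assuming $c$ (resp.\ $d$) non‑exceptional for $f$ (resp.\ $g$) removes such degeneracies and lets one work with $c,d$ finite. Next, genericity of $(f^n(a),g^n(b))$ in $\A^2$ forces $a$ non‑preperiodic for $f$ and $b$ non‑preperiodic for $g$ (otherwise the sequence lies on a finite union of lines), so $h_f(a)>0$ and $h_g(b)>0$; and by $|h-h_f|=O(1)$ together with $h_f(f^n(z))=(\deg f)^nh_f(z)$ one has $h(f^n(a))=(\deg f)^nh_f(a)+O(1)$, likewise for $g^n(b)$. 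Thus it suffices to prove $h_{gcd}(f^n(a)-c,g^n(b)-d)\le\epsilon\max\bigl(h(f^n(a)),h(g^n(b))\bigr)+O_\epsilon(1)$.

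For the geometric core, let $\pi\colon W\to\P^1\times\P^1$ be the blowup at $p=(c,d)$, with exceptional divisor $E$, and let $F_1,F_2$ denote the two ruling classes pulled back to $W$. Silverman's local computation gives, for $(x,y)\in\A^2(K)$ with $(x,y)\ne p$ and lift $\widetilde P\in W(K)$, the place‑by‑place identity $\lambda_{E,v}(\widetilde P)=\min\bigl(v^+(x-c),v^+(y-d)\bigr)+O(1)$; summing over $v\in M_K$ with the normalizations of Section~\ref{heights} yields $h_E(\widetilde P)=h_{gcd}(x-c,y-d)+O(1)$. Since $\cK_W=\pi^*\cK_{\P^1\times\P^1}+E=-2F_1-2F_2+E$, choosing the normal‑crossings divisor $D$ to consist of two fibers of each ruling not through $p$ (so that $D$ has class $2F_1+2F_2$ and is disjoint from $E$) gives $\cK_W+D\sim E$, hence $h_{\cK_W+D}(\widetilde P)=h_E(\widetilde P)+O(1)$. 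Applying Vojta's conjecture (Conjecture~\ref{Vojta}, in the standard form with the normal‑crossings boundary divisor $D$, i.e.\ \cite[Conjecture 3.4.3]{VojtaBook}) to $(W,D)$ with a fixed ample normal‑crossings divisor $A$: for every $\epsilon>0$ there is a proper Zariski‑closed $Z_\epsilon\subset W$ with $h_{\cK_W+D}(\widetilde P)\le\epsilon h_A(\widetilde P)+O_\epsilon(1)$ for $\widetilde P\notin Z_\epsilon\cup\mathrm{Supp}(D)$. Writing the class of $A$ as $\alpha F_1+\beta F_2-\gamma E$ with $\alpha,\beta,\gamma>0$ (possible since $E$ is a $(-1)$‑curve on $W$) and using $h_E\ge O(1)$ off $E$ gives $h_A(\widetilde P)\le\max(\alpha,\beta)\bigl(h(x)+h(y)\bigr)+O(1)$; combining and replacing $\epsilon$ by $\epsilon/(2\max(\alpha,\beta))$ yields $h_{gcd}(x-c,y-d)\le\epsilon\max(h(x),h(y))+O_\epsilon(1)$ for all $(x,y)\in\A^2(K)$ whose lift avoids $Z_\epsilon\cup\mathrm{Supp}(D)$.

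The hard part is step (iii): passing from ``most points'' to the specific orbit $P_n=(f^n(a),g^n(b))$. Since $\pi$ contracts $E$ to $p$ and $\mathrm{Supp}(D)$ is a union of fibers, $\pi\bigl(Z_\epsilon\cup\mathrm{Supp}(D)\bigr)$ is a proper Zariski‑closed subset of $\P^1\times\P^1$, so its intersection with $\A^2$ is proper and closed; genericity of $(P_n)$ then gives $N_\epsilon$ with $P_n\notin\pi\bigl(Z_\epsilon\cup\mathrm{Supp}(D)\bigr)$ and $P_n\ne p$ for all $n>N_\epsilon$, whence $\widetilde P_n\notin Z_\epsilon\cup\mathrm{Supp}(D)$ and the inequality above applies; the finitely many $n\le N_\epsilon$ contribute a bounded amount absorbed into the constant. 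The only real point to check in the bookkeeping is that every implicit $O(1)$, and $W$, $E$, $D$, $A$ themselves, depends only on $(a,b,c,d,f,g)$ and not on $n$ — which is automatic, since these objects are fixed once $(c,d,f,g)$ are, and only $Z_\epsilon$, $N_\epsilon$, and the final constant vary with $\epsilon$.
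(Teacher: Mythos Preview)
The paper does not prove this result; it is quoted from Huang \cite[Theorem~2.4]{Keping} and used as a black box. Your sketch goes beyond the paper by outlining the Silverman--Huang blowup approach, and the three-step plan---identify $h_{gcd}$ with the height attached to the exceptional divisor $E$, apply Vojta on the blowup, use genericity to escape the exceptional locus---is the correct shape of Huang's argument.

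There is, however, a real gap in step (ii). Vojta's Conjecture~3.4.3 in \cite{VojtaBook} bounds $m_S(D,\widetilde P)+h_{\cK_W}(\widetilde P)$ for a \emph{finite} set $S$ of places, not $h_{\cK_W+D}(\widetilde P)$; the two differ by the counting function $N_S(D,\widetilde P)$, and the Zariski condition $\widetilde P\notin\mathrm{Supp}(D)$ does not force $N_S(D,\widetilde P)$ to be bounded. For your choice of $D$ (four generic rulings) there is no a~priori control on $N_S\bigl(D,(f^n(a),g^n(b))\bigr)$, which can grow like $h(f^n(a))+h(g^n(b))\sim(\deg f)^n$ and swamp the desired bound. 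Closing this gap is precisely the substance of Huang's paper, and it is where the non-exceptionality of $c$ and $d$ is genuinely used---not merely to exclude degenerate examples like $f(x)=x^e$ with $c=0$, but to guarantee that $c$ and $d$ have many preimages under iterates of $f$ and $g$, so that one can blow up at enough points (equivalently, take $D$ supported on enough fibers) to make the Vojta inequality strong enough to absorb the counting-function loss. Your single-blowup computation with $\cK_W+D\sim E$ is clean but would only yield the theorem for orbits that are $S$-integral with respect to your four rulings, which is essentially the hypothesis in Silverman's original setting \cite{SilBlowup} rather than the general dynamical one Huang treats.
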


If the sequence $(f^n(a),f^n(-a))$ is generic in $\A^2(K)$, then 
we will apply Theorem~\ref{dynamical gcd bound} to 
demonstrate the existence of primes that satisfy Condition R of Section~\ref{Galois}. 
We also need to handle the possibility that this sequence is not generic. This is done using 
the following result of Xie~\cite{XieDMLA2}, which proves the Dynamical Mordell-Lang 
Conjecture (see e.g.~\cite{GTZ,CaseOfDML}) for polynomial endomorphisms of $\A^2(\overline{\Q})$. 
This will be used to prove Lemma~\ref{not generic}, which shows that in the non-generic case, 
the sequence must lie on a curve of a very restricted form.

\begin{thm}\label{DML}
  Let $F:\A^2 \to\A^2$ be a polynomial endomorphism defined over
  $\overline{\Q}$. Let $C$ be an irreducible curve in $\A^2$ and let
  $\alpha$ be a closed point in $\A^2(\overline{\Q})$. Then the set
  $\{n\in\N : F^n(\alpha)\in C\}$ is a finite union of arithmetic
  progressions.
\end{thm}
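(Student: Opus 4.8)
The plan is to follow the $p$-adic interpolation method of Bell--Ghioca--Tucker, reinforced by a valuation-theoretic analysis of the dynamics near infinity to handle the cases where a naive prime of good reduction does not suffice. First I would make the standard reductions. Replacing $\Qbar$ by a number field $k$ over which $F$, $C$ and $\alpha$ are all defined, we may assume $F$ is dominant, since otherwise its image is a curve or a point on which the whole orbit of $\alpha$ lives, reducing the problem to dimension $\leq 1$. If the forward orbit of $\alpha$ is not Zariski dense in $\A^2$, its closure is a proper $F$-stable subscheme; the orbit eventually lies on a curve that is periodic under $F$ (or collapses onto a periodic point, where the conclusion is immediate), and we reduce to a one-dimensional Dynamical Mordell--Lang problem on that curve. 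If $C$ itself is $F$-periodic of period $k$, then $\{n : F^n(\alpha)\in C\}$ is stable under adding $k$ and is therefore automatically a finite union of arithmetic progressions, so it suffices to treat the case where $\mathcal{O}_F(\alpha)$ is Zariski dense and $C$ is not periodic, where the goal becomes to show that the set is finite.

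The engine of the argument is the $p$-adic arc lemma. Suppose we can locate a finite place $\mathfrak{p}$ of $k$, an $N\geq 0$, and an $m\geq 1$ such that $F$ has good reduction at $\mathfrak{p}$ and $F^N(\alpha)$ lies in a $\mathfrak{p}$-adic residue ball $B$ on which $F^m$ acts as an analytic self-map that is attracting (for instance, the differential of $F^m$ at the relevant fixed point is topologically nilpotent). Then $n\mapsto F^{N+mn}(\alpha)$ extends to a $\mathfrak{p}$-adic analytic map $\psi\colon \Z_p \to B\subset \A^2(\C_p)$. Choosing $g\in k[x,y]$ with $C=\{g=0\}$, the composite $g\circ\psi$ is a single-variable $p$-adic analytic function; by Weierstrass preparation it has only finitely many zeros in $\Z_p$ unless it vanishes identically, and identical vanishing would force $F^{N+mn}(\alpha)\in C$ for every $n$. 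A short separate argument, using that $C$ is not periodic while $\mathcal{O}_F(\alpha)$ is Zariski dense, rules this out; so each of the finitely many progressions $n\equiv r\pmod m$ contributes only finitely many hits, and $\{n : F^n(\alpha)\in C\}$ is finite.

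The genuine difficulty — and the step I expect to be the main obstacle — is that no such pair $(\mathfrak{p},B)$ need exist: the dynamics can fail to have any attracting periodic cycle $p$-adically at every finite place, or the relevant attracting cycle can sit on the line at infinity of a suitable model. Overcoming this requires Xie's machinery. One analyzes the action of $F$ on the valuative tree at the points of $\bP^2\setminus\A^2$, tracks the degree sequence $\deg(F^n)$ against the first dynamical degree $\lambda_1(F)$, and invokes the classification of polynomial endomorphisms of $\A^2$ up to polynomial conjugacy. When $F$ preserves a fibration — for instance a skew product $(x,y)\mapsto(u(x),v(x,y))$, or more generally a map admitting an invariant pencil of curves — one projects and reduces to the one-dimensional Dynamical Mordell--Lang statement, which combines the Skolem--Mahler--Lech theorem with its known refinements for polynomial self-maps of the line. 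The hard primitive case, with no invariant fibration, is handled by passing to an appropriate valuation (equivalently, a suitable sequence of blowups of $\bP^2$) at which $F$ acquires an attracting fixed point whose basin of attraction captures a tail of the orbit of $\alpha$; this feeds back into the $p$-adic arc lemma above. Proving that such an attracting feature always materializes after the correct modifications is the crux, and it is exactly where the full force of Xie's local and valuation-theoretic analysis is needed.
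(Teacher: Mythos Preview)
The paper does not prove this theorem at all: it is quoted as a result of Xie \cite{XieDMLA2} and used as a black box in the proof of Lemma~\ref{not generic}. So there is no ``paper's own proof'' to compare against.

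Your proposal is a fair high-level roadmap of the Bell--Ghioca--Tucker $p$-adic arc method together with an acknowledgment that the hard cases (no $p$-adic attracting cycle, no invariant fibration) require Xie's valuation-theoretic machinery. As such it is more an outline of the literature than a self-contained argument: the step ``Proving that such an attracting feature always materializes after the correct modifications is the crux, and it is exactly where the full force of Xie's local and valuation-theoretic analysis is needed'' is doing essentially all the work, and you have not supplied that analysis. This is not a gap in the sense of a wrong idea, but your write-up is a proof \emph{sketch} that ultimately defers to Xie in the same way the paper does. If the intent was to give an independent proof, the substantive content---the classification via the valuative tree at infinity, the construction of the good model on which an attracting point appears, and the reduction in the fibered case---would all need to be filled in, and that is a paper-length undertaking.
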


Let $f\in K[x]$ be a polynomial of degree $d\geq 2$. We say $f$ is in \emph{normal form} if $f$ is monic and the second highest degree term of $f$ has a coefficient of zero. That is, a normal form $f$ can be written
$$f(x)=x^d+a_{d-2}x^{d-2}+a_{d-3}x^{d-3}+\dots+a_1x+a_0.$$
It is easy to show for any nonlinear $f\in K[x]$, there exists a linear $\sigma\in \overline{K}[x]$ such that $\sigma\circ f\circ \sigma^{-1}(x)$ is in normal form.

We will use Lemma~\ref{normal form} to determine the linear polynomials that commute with iterates of normal form polynomials. 
%For this lemma, $K$ can be any field of characteristic $0$. 
We say $f$ is of \emph{gap} $k$ if the second highest degree term in $f$ is of degree $\deg f-k$.
\begin{lem}\label{normal form}
Suppose that $f\in K[x]$ of degree $d\geq 2$ is in normal form of gap
$k$. Then $f^n$ is in normal form of gap $k$ for all $n\geq 1$.
\end{lem}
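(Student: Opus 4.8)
The statement to prove is Lemma~\ref{normal form}: if $f\in K[x]$ is in normal form of gap $k$ (i.e.\ monic, with the next-highest term in degree $d-k$), then every iterate $f^n$ is again in normal form of gap $k$. The natural approach is induction on $n$, and it suffices to show that $f\circ g$ is in normal form of gap $k$ whenever $g$ is in normal form of gap $k$ and $f$ is in normal form of gap $k$; the base case $n=1$ is the hypothesis. So the whole lemma reduces to a single composition computation, tracking the top two terms of $f\circ g$.

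\textbf{Key steps.} First I would write $g(x)=x^d + c\,x^{d-k} + (\text{lower})$ with $c\in K$ (allowing $c=0$, which corresponds to gap strictly larger than $k$ — I should be slightly careful about whether ``gap $k$'' is meant to allow $c=0$; reading the definition, ``gap $k$'' means the second-highest \emph{appearing} term has degree $d-k$, so $c\neq 0$, and normal form is the case $k\ge 2$). Then compute $f(g(x)) = g(x)^d + c\,g(x)^{d-k} + (\text{lower powers of }g)$. The leading term is $(x^d)^d = x^{d^2}$, so $f\circ g$ is monic of degree $d^2$. For the second term: expanding $g(x)^d = (x^d + c x^{d-k} + \cdots)^d$, the binomial/multinomial expansion gives $x^{d^2} + d\,c\,x^{d^2-k} + (\text{terms of degree} \le d^2 - 2k)$ — crucially the next term after $x^{d^2}$ has degree exactly $d^2-k$ with coefficient $dc\neq 0$ (here $\Char K$ divides... — actually since $K$ is a field of characteristic $0$ throughout the paper, or at worst we only care about number/function fields over $\Q$, $dc\neq 0$ is automatic; but to be safe one should note $d\cdot c \neq 0$, which holds since $\Char K = 0$). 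All other contributions to $f(g(x))$ — namely $c\,g(x)^{d-k}$ and lower — have degree at most $(d-1)d = d^2 - d \le d^2 - k$ (using $k \le d$... hmm, one needs $k\le d$, which is automatic since the second term of $f$ has nonnegative degree $d-k\ge 0$, wait we need $d-k \ge 0$ so $k \le d$; actually to land strictly below $d^2-k$ we'd want $d^2 - d < d^2 - k$, i.e.\ $k < d$, with equality handled separately). So the second-highest appearing term of $f\circ g$ is $dc\,x^{d^2-k}$, giving gap exactly $k$, and since monic with no term strictly between, it is in normal form of gap $k$ (provided $d^2 - k \le d^2 - 2$, i.e.\ the gap is at least $2$, which is inherited from $k\ge 2$).

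\textbf{Main obstacle.} This is essentially a bookkeeping argument with no deep content; the only real subtlety — and the place I'd be most careful — is the degree comparison showing that the $x^{d^2-k}$ coefficient is \emph{not} cancelled and no intermediate-degree term sneaks in. Specifically I must confirm that every monomial in $f(g(x))$ other than $x^{d^2}$ has degree $\le d^2-k$, and that the unique contribution at degree $d^2-k$ comes solely from the cross term in $g(x)^d$ and equals $dc\neq 0$; the contributions from $c\,g(x)^{d-k}$ top out at degree $d(d-k) = d^2 - dk \le d^2 - k$ (since $k\ge 1$ and $d\ge 2$ give $dk \ge k$, with strict inequality unless $d$... fine, $\le$ suffices as long as the leading coefficient $c$ of $c\,g^{d-k}$ sits at degree $d^2-dk < d^2 - k$ when $d\ge 2, k\ge 1$, so it doesn't collide with $d^2-k$; if it did collide one would need to recheck, but it does not). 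Once these elementary inequalities are pinned down the induction closes immediately: $f^n = f\circ f^{n-1}$, with $f^{n-1}$ in normal form of gap $k$ by induction and $f$ in normal form of gap $k$ by hypothesis, so $f^n$ is in normal form of gap $k$.
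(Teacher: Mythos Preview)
Your approach matches the paper's: both argue by induction on $n$, tracking the top two terms and arriving at $f^n(x) = x^{d^n} + d^{n-1} a_{d-k}\, x^{d^n - k} + (\text{lower order terms})$. One minor slip to fix: in your setup $g = f^{n-1}$ has degree $d^{n-1}$, not $d$, so the leading term of $f\circ g$ is $x^{d^n}$ and the cross term lands in degree $d^n - k$; with that correction your bookkeeping goes through unchanged.
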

\begin{proof}
  Write 
\[ f(x)=x^d+ a_{d-k}x^{d-k} + \text{terms of degree
  less than $d - k$}. \]
By induction we have
\[ f^n(x) = x^{d^n} + d^{n-1}a_{d-k}x^{d^n-k} + \text{terms of degree
  less than $d^n - k$}. \]
\end{proof}

\begin{lem}\label{not generic}
  Let $f(x)=x^3-3a^2x + b\in K[x]$ with $a \not= 0$ and $b \not= 0$.  Suppose that $f$ is not PCF. 
  If the sequence $(f^n(a),f^n(-a))$ is not
  generic in $\A^2(K)$, then there exist $m,n\in\N$ such that
  $f^m(a) = f^n(-a)$.
\end{lem}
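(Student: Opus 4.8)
The plan is to use the Dynamical Mordell--Lang theorem (Theorem~\ref{DML}) applied to the split map $F = (f, f)$ on $\A^2$. If the sequence $(f^n(a), f^n(-a))$ is not generic in $\A^2(K)$, then by definition there is a proper Zariski-closed subset that contains infinitely many terms of the sequence; shrinking, we may take an irreducible curve $C \subset \A^2$ with $(f^n(a), f^n(-a)) \in C$ for infinitely many $n$. Applying Theorem~\ref{DML} with $F(x,y) = (f(x), f(y))$, $\alpha = (a, -a)$, and this curve $C$, the set $\{n : F^n(\alpha) \in C\}$ is a finite union of arithmetic progressions, hence (being infinite) contains a full arithmetic progression $\{n_0 + j\ell : j \geq 0\}$. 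Thus the curve $C$ is invariant (more precisely, periodic) under $F$: replacing $C$ by $\bigcup_{i=0}^{\ell-1} F^i(C)$ and $F$ by $F$ itself, we obtain an irreducible curve $D$ containing some $F^i(\alpha)$ with $F^\ell(D) \subseteq D$, so $D$ is periodic under the split map $F^\ell = (f^\ell, f^\ell)$.

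Next I would invoke the Medvedev--Scanlon classification of curves invariant under split polynomial maps (cited in the introduction via \cite{MedvedevScanlon}): an irreducible curve in $\A^2$ that is invariant under $(g,g)$ for a polynomial $g = f^\ell$ is either a vertical/horizontal line (i.e.\ $x = c$ or $y = c$ with $c$ periodic), or the graph $y = \pi(x)$ of a polynomial $\pi$ commuting with an iterate of $g$ up to the symmetries allowed by Ritt's theory. Since $f$ is in normal form $f(x) = x^3 - 3a^2x + b$ (gap $2$ by Lemma~\ref{normal form}, as $a \neq 0$), one checks that the only linear polynomials commuting with iterates of $f$ are $x$ and, when $b = 0$, $-x$; here $b \neq 0$, so the only relevant linear commuter is the identity. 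The horizontal/vertical line case forces $a$ (resp.\ $-a$) to be preperiodic, which combined with the other critical point's orbit would make $f$ PCF, contradicting our hypothesis; the same conclusion rules out the exceptional Chebyshev/power-map cases, none of which occur for a non-PCF normal-form cubic with $a, b \neq 0$. This leaves $C$ equal to the diagonal graph $y = x$ after accounting for the sign: the relevant commuting relation forces $f^i(a)$ and $f^i(-a)$ to lie on a curve that, pushed forward, collapses to $f^m(a) = f^n(-a)$ for suitable $m, n$.

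More concretely, once $D$ is a periodic graph $y = \pi(x)$ with $\pi$ commuting with $f^{\ell k}$ for some $k$, the point $F^i(\alpha) = (f^i(a), f^i(-a)) \in D$ gives $f^i(-a) = \pi(f^i(a))$; since the only polynomials commuting with iterates of a non-PCF normal-form cubic are the iterates of $f$ themselves together with the identity (again using that $b \neq 0$ kills the odd symmetry), $\pi$ must be $f^t$ for some $t \geq 0$ or the identity, yielding $f^i(-a) = f^{i+t}(a)$ (or $f^i(-a) = f^i(a)$), which is the desired relation $f^m(a) = f^n(-a)$. The main obstacle I anticipate is the bookkeeping in the Medvedev--Scanlon/Ritt step: one must carefully enumerate which invariant curves can occur for the specific family $x^3 - 3a^2 x + b$, rule out the power-map and Chebyshev exceptional cases using $a \neq 0$, $b \neq 0$, and non-PCF-ness, and correctly track that the two coordinates start at $a$ and $-a$ rather than at a common point—so the sign bookkeeping (whether the odd symmetry $x \mapsto -x$ intervenes) has to be handled, and it is precisely here that $b \neq 0$ is used to conclude that no nontrivial Möbius symmetry of $f$ is available.
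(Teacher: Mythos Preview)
Your approach mirrors the paper's almost exactly: use Xie's dynamical Mordell--Lang theorem (Theorem~\ref{DML}) to pass from ``not generic'' to an $F^{m_1}$-periodic curve $C$, invoke Medvedev--Scanlon to identify $C$ as a graph $y=p(x)$ or $x=p(y)$ with $p$ commuting with an iterate of $f$, and then use Ritt's theory together with the normal-form Lemma~\ref{normal form} and the hypothesis $b\neq 0$ (which kills the symmetry $x\mapsto -x$) to deduce that $p$ is an iterate of $f$. The paper carries out the Ritt step in more detail---it invokes Ritt's classification of polynomials sharing a common iterate, together with the result of \cite{Reznick} that $f^n$ odd forces $f$ odd, to pin down $p=f^r$ explicitly---but your sketch captures the same mechanism.

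The one step where your reasoning is actually wrong is the dismissal of the vertical/horizontal line case. You correctly note that if $C$ is the line $x=c$ then $a$ must be preperiodic, but your claim that this ``would make $f$ PCF'' is false: PCF requires \emph{both} finite critical points to be preperiodic, and nothing here forces $-a$ to be. Indeed, for $f(x)=x^3-3x+3$ the critical point $a=1$ is fixed while $-a=-1$ has infinite orbit, so $f$ is not PCF; the sequence $(f^n(1),f^n(-1))=(1,f^n(-1))$ lies on $x=1$ and is therefore not generic, yet there are no $m,n$ with $f^m(1)=f^n(-1)$ since the orbit of $-1$ never meets the fixed point $1$. The paper's own proof does not explicitly address this case either (it applies Ritt's theorem to $p$ without first excluding constant $p$), so this is a subtlety in the lemma as stated rather than a divergence between your argument and the paper's; in the downstream application (Lemma~\ref{both critical points}) the case where one critical point is preperiodic is harmless because the relevant sum of $N_\fp$ is then bounded independently of $n$.
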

\begin{proof}
Let $F: \A^2 \lra \A^2$ be given by $F(x,y) = (f(x), f(y))$.  
If the sequence $F^n(a, -a)$ is not
  generic in $\A^2$, there is a curve $C\subseteq \A^2$ such that
  $F^n(a, -a)\in C$ for infinitely many $n$. By Xie's 
  proof \cite{XieDMLA2} of the dynamical Mordell-Lang
  conjecture for $\A^2(K)$, it follows that there are integers
  $m_0\geq 0$ and $m_1>0$ such that $F^{m_0 + k m_1}(a,-a) \in C$ for
  all integers $k \geq 0$. Therefore
  $\{F^{m_0+m_1k}(a, -a): k\geq 0\}\subseteq C$, and the Zariski closure
  of this set is $F^{m_1}$-invariant. As $f$ is not PCF, this set is infinite, so $C$ is
  $F^{m_1}$-invariant.

  By the chain rule, critical points of $f$ are also critical points
  of $f^{m_1}$. Thus, as we assumed $f$ is not PCF, it follows that
  $f^{m_1}$ is not PCF either. In particular, $f^{m_1}$ is not a power
  map, Chebyshev polynomial, or negative Chebyshev polynomial. Therefore by
  work of Medvedev and Scanlon~\cite{MedvedevScanlon}, the $F^{m_1}$-invariant curve $C$ is the
  graph of a polynomial that commutes with an iterate of $f$. That is, $C$ is
  given by an equation of the form $y=p(x)$ or $x=p(y)$ where
  $p\in K[x]$ commutes with with some iterate of $f$, that is, 
  $f^t\circ p=p\circ f^t$ for some $t>0$. Moreover, Ritt's
  theorem on commuting polynomials \cite{Ritt1} implies that
  $p(x)=L\circ h^r(x)$ for some $r>0$ and $L,h\in K[x]$ where
  $h^k=f^\ell$ for some $k$ and $\ell$, and $L$ is a linear polynomial
  that commutes with some iterate of $f$.  By Lemma~\ref{normal form}, as $f$ is 
  in normal form with gap 2, $f^n$ is also in normal
  form with gap 2. So the only nonidentity linear polynomial $L$ that can
  commute with any iterate of $f^n$ is $L(x) = -x$.  Now, by \cite{Reznick}, if $f^n$
  is odd, then $f$ is odd, but we have assumed that $b \not= 0$, so $f$ is not odd.
  Thus, any polynomial that commutes with $f$ shares a common iterate
  with $f$.

Ritt's classification of complex polynomials with a common iterate \cite{Ritt2} gives
\[f(x)=-B+\epsilon_1g^{n_1}(x+B)\]
and
\[h(x)=-B+\epsilon_2g^{n_2}(x+B)\]
where $B\in\C$, $g(x)\in x^r\C[x^s]$ for some $s\geq 1$, and
$\epsilon_1^s=\epsilon_2^s=1$. Note that $s$ can be taken to be
the largest integer such that the exponents occurring in $g$ form an
arithmetic progression of modulus $s$.  We must have $n_1 = 1$, since
$f$ has prime degree.  Furthermore, we must have $s = 1$, since if
$s \geq 2$, then $B = 0$ (since $f$ is in normal form
already), which would mean that $g$ is a multiple of $f$, and the linear
term of $f$ would be zero, contradicting our assumption that $a\neq 0$.  It follows then that
$\epsilon_1 = \epsilon_2 = 1$.  Now, let $\sigma(x) = x + B$.  Then
$f = \sigma^{-1} g \sigma$ and $h = \sigma^{-1} g^{n_2}\sigma$, so
$h = f^{n_2}$.  Therefore, $C$ is given by an equation of the form
$y= f^r(x)$ or $x= f^r(y)$.  Since $(f^{m_0}(a), f^{m_0}(-a)) \in C$,
we see then that there are $m$, $n$ such that $f^m(a) = f^n(-a)$, as
desired. 
\end{proof}

%%% May need to fix
%\begin{prop}\label{number}
%Let $c,d \in K$, and suppose that $h_f(a) > 0$.  Let $\cY(n)$ be
%the set of finite primes $v$ of $K$ such that $\min(v(f^n(a) - c),
%v(f^m(-a) - d)) >0$ for some $0 < m < n$.  Then, for any $\delta > 0$, we have
%\[ \sum_{v \in \cY(n)} N_v \leq \delta h(f^n(a)) + O_\delta(1) \]
%for all $n$.
%\end{prop}

\begin{lem}\label{both critical points}
Let $f(x)=x^3-3a^2x+b\in K[x]$ with $a\neq 0$ and $b\neq 0$. Let $\beta_1,\beta_2\in K$. 
Assume both the $abc$-conjecture for $K$ and Vojta's conjecture for blowups 
of $\P^1 \times\P^1$. Suppose that $f$ is not PCF and that $h_f(a)\geq h_f(-a)$. 
Suppose that $f^i(a)\neq\beta_1$ 
and $f^i(-a)\neq\beta_2$ for any $i>0$. Let $\cY(n)$ be the 
set of primes $\fp$ of $K$ such that 
\[
\min(v_\fp(f^n(a)-\beta_1),v_\fp(f^m(-a)-\beta_2))>0
\]
for some $1\leq m\leq n$. Then for any $\delta>0$, we have
\[
\sum_{\fp\in \cY(n)}N_\fp\leq \delta 3^n h_f(a) + O_\delta(1).
\]
\end{lem}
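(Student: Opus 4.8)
The plan is to split on whether the sequence $(f^n(a),f^n(-a))$ is generic in $\A^2(K)$, reducing each case to an estimate already established. First I would dispose of degeneracies. Since $f$ is not PCF, not both of $\pm a$ are preperiodic, so $h_f(a)\geq h_f(-a)$ forces $h_f(a)>0$. If moreover $h_f(-a)=0$, then $-a$ is preperiodic, the set $\{f^m(-a)-\beta_2:m\geq 1\}$ is finite with no zero entry (as $\beta_2\notin\O_f(-a)$), hence only finitely many primes lie in $\cY(n)$ and the bound is immediate; so assume $h_f(-a)>0$.

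\emph{Generic case.} Fix $\delta>0$, let $B=B_{\delta/2}$ be the constant of Lemma~\ref{B-delta} applied with $\gamma=-a$ and $\beta=\beta_2$, and write $\cY(n)\subseteq\cY'(n)\cup\cY''(n)$, where $\cY'(n)=\{\fp:\ v_\fp(f^m(-a)-\beta_2)>0\ \text{for some}\ 1\leq m\leq n-B\}$ and $\cY''(n)$ consists of the primes of $\cY(n)$ whose witness $m$ satisfies $n-B<m\leq n$. Lemma~\ref{B-delta} gives
\[
\sum_{\fp\in\cY'(n)}N_\fp\ \leq\ \sum_{m=1}^{n-B}\ \sum_{v_\fp(f^m(-a)-\beta_2)>0}N_\fp\ \leq\ \tfrac{\delta}{2}\,3^n h_f(-a)\ \leq\ \tfrac{\delta}{2}\,3^n h_f(a).
\]
For $\cY''(n)$, put $j=n-m\in\{0,\dots,B-1\}$; since $f^n(a)=f^{n-j}(f^j(a))$ and, by definition of $h^0_{gcd}$, the sum of $N_\fp$ over primes with $v_\fp(u),v_\fp(v)>0$ is at most $h^0_{gcd}(u,v)\leq h_{gcd}(u,v)$, we get
\[
\sum_{\fp\in\cY''(n)}N_\fp\ \leq\ \sum_{j=0}^{B-1}h_{gcd}\bigl(f^{n-j}(f^j(a))-\beta_1,\ f^{n-j}(-a)-\beta_2\bigr).
\]
I would then apply Theorem~\ref{dynamical gcd bound} to each of these $B$ terms, with $g=f$, base points $f^j(a)$ and $-a$, and targets $c=\beta_1$, $d=\beta_2$. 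Its hypotheses hold: $\beta_1,\beta_2\in K$ are finite, hence not exceptional for $f$ (a non-PCF polynomial is not conjugate to a power map, so its only exceptional point is $\infty$); and the sequence $(f^k(f^j(a)),f^k(-a))=(f^{k+j}(a),f^k(-a))$ is generic, since if it lay on a curve $C$ for infinitely many $k$ then $(f^k(a),f^k(-a))$ would lie on the proper Zariski-closed set $\psi^{-1}(C)$ for infinitely many $k$, where $\psi(x,y)=(f^j(x),y)$, contradicting the case hypothesis. Since $h_f(f^j(a))=3^j h_f(a)\geq h_f(-a)$, Theorem~\ref{dynamical gcd bound} bounds the $j$-th term by $\epsilon\cdot 3^j h_f(a)\cdot 3^{n-j}+O_{\epsilon,j}(1)=\epsilon\,3^n h_f(a)+O_{\epsilon,j}(1)$ for any $\epsilon>0$. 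Summing over $j$ and taking $\epsilon=\delta/(2B)$ yields $\sum_{\fp\in\cY''(n)}N_\fp\leq\tfrac{\delta}{2}\,3^n h_f(a)+O_\delta(1)$, which with the estimate for $\cY'(n)$ completes this case.

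\emph{Non-generic case.} Lemma~\ref{not generic} provides $m_0,n_0$ with $f^{m_0}(a)=f^{n_0}(-a)$; applying $h_f$ gives $3^{m_0}h_f(a)=3^{n_0}h_f(-a)$, so $h_f(a)\geq h_f(-a)$ forces $m_0\leq n_0$, and inspecting the form of the invariant curve arising in the proof of Lemma~\ref{not generic} (the case $y=f^r(x)$ being excluded by $h_f(a)\geq h_f(-a)$) shows $n_0>m_0$. Setting $\ell_1=n_0>\ell_2=m_0$ we have $f^{\ell_1}(-a)=f^{\ell_2}(a)$, which is precisely the hypothesis of Lemma~\ref{one-orbit} with $\gamma_1=-a$ (small index, paired with $\beta_2$) and $\gamma_2=a$ (large index $n$, paired with $\beta_1$), i.e. with the roles of $\beta_1,\beta_2$ interchanged relative to that lemma's statement; since $h_f(\gamma_2)=h_f(a)>0$, Lemma~\ref{one-orbit} then gives $\sum_{\fp\in\cY(n)}N_\fp\leq\delta\,3^n h_f(a)+O_\delta(1)$ at once. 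One must still check the non-containment hypothesis of Lemma~\ref{one-orbit} ($\beta_1\notin\O_f(\beta_2)$ in the present notation), which is automatic in the application of interest, where $\beta_1=\beta_2=\beta$ with $\beta$ non-periodic.

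The main obstacle is the range of $m$ allowed in $\cY(n)$: the dynamical gcd input of Theorem~\ref{dynamical gcd bound} only governs the diagonal index, so one is forced to truncate to a bounded window of $m$'s near $n$ (absorbing the tail via the elementary height bound of Lemma~\ref{B-delta}) and then, inside that window, to absorb the offset $j=n-m$ by shifting the first base point from $a$ to $f^j(a)$. That shift is legitimate exactly because genericity is preserved under the coordinatewise map $(x,y)\mapsto(f^j(x),y)$; this preservation property, together with the non-exceptionality of $\beta_1$ and $\beta_2$, is the only genuinely non-mechanical point, the subsequent juggling of $\delta$, $\epsilon$ and $B_\delta$ being routine.
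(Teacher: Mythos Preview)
Your argument is correct and follows essentially the same strategy as the paper's: the generic/non-generic split, the appeal to Lemma~\ref{one-orbit} via Lemma~\ref{not generic} in the non-generic case, and in the generic case the truncation by Lemma~\ref{B-delta} down to a bounded window $n-B<m\le n$ handled by Theorem~\ref{dynamical gcd bound}. The only substantive difference is how the offset $j=n-m$ is absorbed inside that window: you shift the \emph{base point} from $a$ to $f^j(a)$ and then verify that the shifted sequence $(f^{k+j}(a),f^k(-a))$ remains generic (via pullback along $(x,y)\mapsto(f^j(x),y)$), whereas the paper instead shifts the \emph{target} from $\beta_2$ to $f^j(\beta_2)$, observing that at a prime of good reduction $f^m(-a)\equiv\beta_2\pmod\fp$ forces $f^n(-a)\equiv f^{j}(\beta_2)\pmod\fp$, so that Theorem~\ref{dynamical gcd bound} may be applied directly to the original generic pair $(f^n(a),f^n(-a))$ with a varying second target. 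Both routes give the same bound; the paper's avoids the auxiliary genericity verification, while yours avoids invoking good reduction at that step. Your flagging of the missing orbit non-containment hypothesis in the non-generic case is apt; the paper's proof is equally silent on this (and on the strictness of $n_0>m_0$), and as you note it is harmless in the intended application.
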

\begin{proof}
First note that $h_f(a)>0$, because if $h_f(a)=0$ then $h_f(-a)=0$ also. But Northcott's Theorem 
then implies $a$ and $-a$ are both preperiodic for $f$ (see ~\cite{CallSilverman}). Then we have that $f$ is PCF, 
which is excluded by assumption.

If $\beta_1$ or $\beta_2$ were exceptional for $f$, then $f$ would be a power map, again 
contradicting our assumption that $f$ is not PCF. Thus, if  
the sequence $(f^n(a),f^n(-a))$ is not generic in $\A^2$, then by 
Theorem \ref{not generic} there are positive integers $\ell_1,\ell_2$ such that 
$f^{\ell_1}(a)=f^{\ell_2}(-a)$, and the conclusion follows from Lemma \ref{one-orbit}. 
Assume therefore that the sequence $(f^n(a),f^n(-a))$ is generic in $\A^2$. 
Theorem \ref{dynamical gcd bound} and the inequality $h^0_{gcd}\leq h_{gcd}$ 
imply that the conclusion holds in the case where $m=n$, that is,
\[
\sum_{\min(v_\fp(f^n(a)-\beta_1),v_\fp(f^n(-a)-\beta_2))>0}N_\fp\leq \delta 3^n h_f(a) + O_\delta(1).
\]
For $m<n$, a more delicate argument is required. 

By Lemma \ref{B-delta}, up to possibly replacing $\delta$ by something smaller, 
there exists $B_\delta\leq-\frac{\log\delta}{\log 3}$ such that
\begin{equation}\label{one}
\sum_{m=1}^{n - B_\delta}
 \sum_{v_\fp(f^m(-a) - \beta_2) > 0} N_\fp \leq \delta 3^n h_f(-a)\leq \delta 3^nh_f(a). 
\end{equation}
Let $m$ be such that $n-B_\delta<m\leq n$. Assume that $f$ has good reduction at $\fp$ (as usual, any contribution from the finitely many primes of bad reduction will be absorbed into the $O_\delta(1)$ term). Then if $v_\fp(f^m(-a) - \beta_2) > 0$, we have $f^m(-a)\equiv\beta_2\pmod{\fp}$, so 
\[
f^n(-a)=f^{n-m}(f^m(-a))\equiv f^{n-m}(\beta_2)\pmod{\fp}
\]
where $n-m<B_\delta$. For each $j$ with $0\leq j<B_\delta$, let $\cW(j)$ be the set 
of primes $\fp$ of $\fo_K$ such that $\min(v_\fp(f^n(a)-\beta_1),v_\fp(f^n(-a)-f^j(\beta_2))>0$. Then
\[
\sum_{\min(v_\fp(f^n(a)-\beta_1),v_\fp(f^m(-a)-\beta_2))>0} N_\fp\leq
\sum_{\p\in \cW(n-m)}N_\fp.
\]
Now we apply Theorem 
\ref{dynamical gcd bound} to bound the contribution to the sum from the various $\cW(j)$:
\begin{align}\label{two}
 \sum_{j=1}^{B_\delta-1}\sum_{\p\in \cW(j)}N_\fp \leq B_\delta\delta h_f(a) 3^n + C_\delta
 \leq -\frac{\delta\log\delta}{\log 3} h_f(a)3^n + C_\delta,
\end{align}
Adding (\ref{one}) and (\ref{two}), and observing that $\delta\log\delta\to 0$ as $\delta\to 0$, 
we are done.
\end{proof}

\section{The case of function fields}\label{function-case}

Throughout this section, $K$ will denote a function field of
transcendence degree 1 over an algebraic extension of $\Q$.  We derive
the following from work of Ghioca and Ye \cite{GY} (see also
\cite{Gauthier}) along
with Call-Silverman specialization \cite{CallSilverman}.  
 
\begin{prop}\label{FunctionFinite}
  Suppose that $f(x) = x^3 - 3a^2x + b\in K[x]$ is not isotrivial. 
  Suppose furthermore that
  $b \not= 0$ and that there are no $i,j > 0$ such that
  $f^i(a) = f^j(-a)$.  Let $c, d \in K$ be such that
  $f^\ell(a) \not=c $ and $f^\ell(-a) \not = d$ for all positive integers
  $\ell$.  Then there are at most finitely many places $v$ of $K$ such
  that there are positive integers $m,n$ with the property that
\[ \min(v(f^m(a) - c), v(f^n(-a) - d)) > 0. \]
\end{prop}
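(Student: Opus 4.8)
The strategy is to reduce the statement to a dynamical André–Oort / unlikely-intersection statement over the base, using Call–Silverman specialization to pass between the generic fiber and the special fibers at the places $v$ of $K$. Suppose for contradiction that there are infinitely many places $v$ of $K$ at which we can find $m = m(v)$, $n = n(v) > 0$ with $v(f^m(a) - c) > 0$ and $v(f^n(-a) - d) > 0$. After discarding the finitely many places of bad reduction for $f$ and the finitely many poles of $a, b, c, d$, at each such $v$ the reduction $f_v$ is again a cubic polynomial with critical points $\pm a_v$, and the congruences say precisely that $c_v$ lies in the forward orbit of $a_v$ and $d_v$ lies in the forward orbit of $-a_v$ under $f_v$. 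In particular $c_v$ and $d_v$ are \emph{preperiodic-adjacent}, i.e. each lies in the grand orbit of a critical point; and a standard pigeonhole/recurrence argument lets us extract an infinite subfamily on which the pair $(m(v), n(v))$ stabilizes modulo any fixed modulus, so we may assume the orbit relations hold with uniformly controlled indices, or at least that the relevant preperiodic portrait is constant along the subfamily.

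First I would set up the arithmetic-geometry picture: write $K$ as the function field of a smooth projective curve $\mathcal{C}$ over $\Qbar$, spread out $f$, $a$, $b$, $c$, $d$ to a family over a Zariski-open $U \subseteq \mathcal{C}$, and regard the condition ``$c$ lies in the forward orbit of $a$'' as cutting out (after fixing indices) a subscheme of $U$. The key input is that, because $f$ is not isotrivial and $b \neq 0$ (so $f$ is genuinely bicritical with distinct critical orbits, and by hypothesis $f^i(a) \neq f^j(-a)$), the two critical orbits are ``dynamically independent'' in the family; the dynamical André–Oort theorem of Ghioca–Ye \cite{GY} (and Favre–Gauthier \cite{Gauthier}) then forbids the simultaneous occurrence of a relation on the $a$-orbit and an independent relation on the $(-a)$-orbit at infinitely many points of $U$ unless the family degenerates in a way we have excluded. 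Concretely, one encodes ``$c_v \in \mathcal{O}_{f_v}(a_v)$ for infinitely many $v$'' as forcing $c$ to be dynamically related to the critical point $a$ in the family — via the Call–Silverman canonical height $\hat h_{f}(a - \text{(orbit of }c))$ or the associated adelic/Green-function metric being degenerate — and similarly for $d$ and $-a$; having both simultaneously at an infinite set of places, by equidistribution of small points (the engine behind \cite{GY, Gauthier}), forces the bifurcation measures attached to the two critical marked points to coincide, hence forces a symmetry ($f$ conjugate to an odd polynomial, i.e. $b = 0$) or a shared critical orbit ($f^i(a) = f^j(-a)$) — both excluded.

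The cleanest route, and the one I would actually write, is to invoke specialization directly: by Call–Silverman \cite{CallSilverman}, for all but finitely many $v$ the specialization map preserves the canonical height, and $\hat h_{f_v}(a_v) = 0$ iff $a_v$ is preperiodic for $f_v$. Since $f$ is not isotrivial, $\hat h_f(a) > 0$ or $\hat h_f(-a) > 0$; WLOG $\hat h_f(a) > 0$. Then $a$ is not preperiodic in the family, and the set of $v$ where $c_v \in \mathcal{O}_{f_v}(a_v)$ with the orbit length bounded is finite (this is essentially a height bound: $h(f^m_v(a_v))$ grows like $3^m \hat h_{f_v}(a_v)$, while $c_v$ has bounded height). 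The genuinely new content — and the main obstacle — is handling \emph{unbounded} orbit lengths $m(v) \to \infty$: here one cannot use a naive height bound, and one must appeal to the Zsigmondy/primitive-prime-divisor philosophy together with the non-repetition hypothesis $f^i(a) \neq f^j(-a)$ to rule out the orbit of $a$ passing near $c$ and the orbit of $-a$ passing near $d$ at the same place. This is exactly where the dynamical André–Oort results of \cite{GY, Gauthier} are indispensable: they show that requiring both critical orbits to have specified behavior at a common (infinite) set of parameters forces the two orbits to be linked, contradicting $f^i(a) \neq f^j(-a)$ and $b \neq 0$. I would expect the bulk of the work in the actual proof to be bookkeeping: reducing to finitely many ``portraits,'' spreading out cleanly, and citing \cite{GY, Gauthier} in a form that applies to the pair of marked critical points $(a, -a)$ of the cubic family $x^3 - 3a^2x + b$.
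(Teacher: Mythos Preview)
Your overall plan is correct in shape --- contradiction, an infinite sequence of places $\lambda_i$, Call--Silverman specialization, and the Ghioca--Ye theorem as the punchline --- but you are missing the clean bridge between ``orbit relations hold at $\lambda_i$'' and the actual hypothesis of Ghioca--Ye, and you fill that gap with machinery (pigeonhole on portraits, Zsigmondy-type arguments) that does not belong here.

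The step you have not articulated is this. Once $m_i, n_i \to \infty$ (which you correctly note: for each fixed $m$, the element $f^m(a) - c \in K^\times$ has only finitely many zeros), the relation $f_{\lambda_i}^{m_i}(a_{\lambda_i}) = c_{\lambda_i}$ gives
\[
h_{f_{\lambda_i}}(a_{\lambda_i}) \;=\; 3^{-m_i}\, h_{f_{\lambda_i}}(c_{\lambda_i}).
\]
A short Call--Silverman argument (their Theorems~3.1 and~4.1) shows that the heights $h_C(\lambda_i)$ on the base curve are bounded --- otherwise $h_{f_{\lambda_i}}(a_{\lambda_i})/h_C(\lambda_i) \to h_f(a) > 0$ would contradict the display --- and hence $h_{f_{\lambda_i}}(c_{\lambda_i})$ is bounded. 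Therefore $h_{f_{\lambda_i}}(a_{\lambda_i}) \to 0$, and by the same reasoning $h_{f_{\lambda_i}}(-a_{\lambda_i}) \to 0$. This is precisely the hypothesis of Theorem~\ref{GYT}, which then forces $b = 0$, or $\pm a$ preperiodic, or $f^i(a) = f^j(-a)$ --- all excluded. No Zsigmondy, no portrait combinatorics, and no need to unpack equidistribution or bifurcation measures by hand: Ghioca--Ye is invoked as a black box about sequences with vanishing specialized canonical heights, not directly as a statement about ``simultaneous orbit relations.''

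So your remark that ``one cannot use a naive height bound'' for unbounded $m(v)$ is exactly backwards: the height bound \emph{is} the whole argument. (Also, you need both $h_f(a) > 0$ and $h_f(-a) > 0$, not just one of them; the paper disposes of the preperiodic cases in one line at the start.)
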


Recall that a function field $K$ of transcendence degree 1 over an algebraic extension of $\Q$
gives rise to a curve $C$ defined over $\Qbar$, and that the places of $K$
correspond to closed points in $C(\Qbar)$ (when we refer to points of $C(\Qbar)$, 
we will mean closed points with respect to the field of constants of $K$). For any element of $c \in K$ and
any point $\lambda$ of $C(\Qbar)$ such that $c$ does not have a pole at
$\lambda$, we let $c_\lambda$ denote the specialization of $c$ to
$\Qbar$ at $\lambda$ (see \cite{CallSilverman} for more details);
likewise for a rational function $\varphi \in K(x)$, we let
$\varphi_\lambda$ denote the specialization of $\varphi$ to $\Qbar(x)$
at $\lambda$ for any $\lambda$ such that the coefficients of $\varphi$
do not have poles at $\lambda$.  With notation as above, Ghioca and Ye
\cite{GY} proved the following (see also \cite{Gauthier} for a similar
result).  

\begin{thm}\label{GYT}(\cite{GY}).   Let $f(x) = x^3 - 3a^2x + b \in K[x]$ be
  nonisotrivial.  
If there is an infinite sequence
  $(\lambda_i)_{i=1}^\infty$ in $C(\Qbar)$ such that 
\[ \lim_{i \to \infty} h_{f_{\lambda_i}} (a_{\lambda_i}) =  \lim_{i \to \infty}
  h_{f_{\lambda_i}} (-a_{\lambda_i}) = 0, \]
then at least one of the following holds:
\begin{itemize}
\item $b=0$;
\item $a$ is preperiodic under $f$;
\item $-a$ is preperiodic under $f$; or
\item there are $i,j > 0$ such that $f^i(a) = f^j(-a)$.  
\end{itemize}
\end{thm}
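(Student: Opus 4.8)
The plan is to prove Theorem~\ref{GYT} by combining arithmetic equidistribution of small points with a rigidity argument for parametrized Green's functions; this is in essence the strategy of Ghioca--Ye \cite{GY} (see also Favre--Gauthier \cite{Gauthier}), which I outline here.

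\emph{Setup and reduction.} Write $K=\Qbar(C)$ and suppose, for contradiction, that none of the four conclusions holds: $b\neq 0$, neither $a$ nor $-a$ is preperiodic under $f$, and $f^i(a)\neq f^j(-a)$ for all $i,j>0$. Since $a=-a$ would give $f^i(a)=f^i(-a)$, our standing assumption forces $a\neq 0$, so $\pm a$ are the two distinct critical points of $f$, and $f$ is not odd (as $b\neq 0$). Because $f$ is nonisotrivial, Baker's theorem on canonical heights over function fields (the function-field counterpart of Call--Silverman, \cite{CallSilverman}) gives that $h_a:=\hat h_f(a)$ and $h_b:=\hat h_f(-a)$ are both strictly positive. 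By the Call--Silverman specialization machinery \cite{CallSilverman}, the functions $\lambda\mapsto h_{f_\lambda}(a_\lambda)$ and $\lambda\mapsto h_{f_\lambda}(-a_\lambda)$ on $C(\Qbar)$ are, up to a bounded error, heights attached to adelically metrized line bundles $L_a$ and $L_{-a}$ on $C$, of degrees $h_a$ and $h_b$; at a place $v$ of the constant field $\Qbar$, the local metric is built from the escape-rate (Green's) function $\lambda\mapsto G_{f_\lambda,v}(\pm a_\lambda)$, away from the finite set $S\subset C$ of bad parameters (poles of $a$ or $b$, and parameters of bad reduction). After passing to a subsequence we may assume the $\lambda_i$ are pairwise distinct; being an infinite set of distinct closed points of the irreducible curve $C$, the sequence $(\lambda_i)$ is generic.

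\emph{Equidistribution.} By hypothesis $h_{f_{\lambda_i}}(a_{\lambda_i})\to 0$, so $(\lambda_i)$ is a generic sequence of $L_a$-small height. Arithmetic equidistribution over a curve (Yuan's theorem, in the form of Baker--Rumely / Chambert-Loir / Favre--Rivera-Letelier) then shows that, at every place $v$ of $\Qbar$, the Galois-averaged Dirac measures at the $\lambda_i$ converge weakly on the Berkovich analytification $C_v^{\mathrm{an}}$ to the normalized curvature measure $\mu_{a,v}:=h_a^{-1}c_1(L_a)_v$. The same argument applied to $L_{-a}$, using $h_{f_{\lambda_i}}(-a_{\lambda_i})\to 0$, shows that the same measures converge to $\mu_{-a,v}:=h_b^{-1}c_1(L_{-a})_v$. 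By uniqueness of weak limits, $\mu_{a,v}=\mu_{-a,v}$ for every $v$: the normalized bifurcation measures of the two critical points coincide at every place.

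\emph{Rigidity and conclusion (the main obstacle).} It remains to deduce one of the excluded relations from the identity $\mu_{a,v}=\mu_{-a,v}$ at all $v$, and this is the genuinely hard part. Working at an archimedean place, one uses potential theory on $C(\C)$ -- the delicate point being the matching of the logarithmic singularities of $G_{f_\lambda}(\pm a_\lambda)$ at the parameters in $S$, which is exactly the content of the equality of curvatures -- to show that a suitable normalization of $G_{f_\lambda}(a_\lambda)$ and of $G_{f_\lambda}(-a_\lambda)$ differ by a global harmonic, hence constant, function on $C(\C)$; evaluating along $(\lambda_i)$, where $h_{f_{\lambda_i}}(\pm a_{\lambda_i})\to 0$ forces the local archimedean Green's values to tend to $0$, pins this constant to $0$, giving $h_b\,G_{f_\lambda}(a_\lambda)\equiv h_a\,G_{f_\lambda}(-a_\lambda)$ identically in $\lambda\in C(\C)$. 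One then invokes the rigidity principle of Baker--DeMarco for parametrized Green's functions: such an identity along a nonisotrivial family of cubic polynomials forces the marked points $a$ and $-a$ to be dynamically related. Making this explicit -- analyzing the identity at the parameters in $S$ via B\"ottcher coordinates, using $G_{f_\lambda}(f_\lambda^k(z))=3^kG_{f_\lambda}(z)$ and the fact that $f$ is in normal form of gap $2$ (Lemma~\ref{normal form}), so that $\pm a$ are the only critical points -- one extracts integers $i,j>0$ with $3^ih_a=3^jh_b$ together with either the orbit collision $f^i(a)=f^j(-a)$ (conclusion (4)) or a relation forcing $f$ to be odd and hence $b=0$ (conclusion (1)). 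Either way our standing assumption is contradicted, completing the proof. The passage from ``equal bifurcation measures'' to the explicit orbit relation, with the exceptional symmetry locus $b=0$ carefully separated out, is precisely the content of the classification of special curves in the moduli space of cubic polynomials due to Ghioca--Ye \cite{GY} and Favre--Gauthier \cite{Gauthier}, and is where I expect the real difficulty to lie.
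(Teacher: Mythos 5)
The first thing to note is that the paper contains no proof of this statement to compare yours against: Theorem \ref{GYT} is imported as a black box from Ghioca--Ye \cite{GY} (with \cite{Gauthier} cited for a closely related result) and is only \emph{used}, in the proof of Proposition \ref{FunctionFinite}, via Call--Silverman specialization. So the only meaningful comparison is with the strategy of the cited works, and your sketch does follow that strategy in outline: positivity of $\hat h_f(\pm a)$ for nonisotrivial $f$ with non-preperiodic marked points, specialization heights realized as heights for adelically metrized line bundles on $C$ built from parametrized Green's functions, Yuan-type equidistribution of the generic small sequence $(\lambda_i)$ at every place, and hence equality of the two normalized bifurcation measures.

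As a proof, however, your proposal has a genuine gap, and it sits exactly where you say you expect it. The passage from $\mu_{a,v}=\mu_{-a,v}$ at all places (equivalently, proportionality of $G_{f_\lambda}(a_\lambda)$ and $G_{f_\lambda}(-a_\lambda)$ over the whole parameter curve) to the dichotomy ``$b=0$, or $\pm a$ preperiodic, or $f^i(a)=f^j(-a)$ for some $i,j>0$'' is not an application of an off-the-shelf rigidity principle; it \emph{is} the substance of the theorem. Your final step appeals to ``the classification of special curves in the moduli space of cubic polynomials due to Ghioca--Ye \cite{GY} and Favre--Gauthier \cite{Gauthier},'' i.e.\ to the very result being proved, so the argument is circular as it stands. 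The Baker--DeMarco-style analysis you gesture at --- matching logarithmic singularities at the bad parameters, a harmonicity argument pinning the constant, B\"ottcher-coordinate expansions yielding a relation $3^i h_a = 3^j h_b$ and then an orbit identity, with the symmetry locus $x\mapsto -x$ (the $b=0$ case) carefully separated from a genuine orbit collision --- is precisely the technical core of \cite{GY} and \cite{Gauthier} and is nowhere carried out in your proposal. So what you have is a correct roadmap of the cited proof, not a proof; within this paper the honest treatment is the one the authors adopt, namely quoting the result.
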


We now prove a simple lemma that follows from work of Call and
Silverman \cite{CallSilverman}.  Note that this lemma works for any
rational function of degree greater than one over a function field,
not just for cubic polynomials.

\begin{lem}\label{FromCS}
  Let $\varphi \in K(x)$ have degree $d \geq 2$.  Let $y, z \in K$ with
  $h_\varphi(y) > 0$.  Let $(\lambda_i)_{i=1}^\infty$ be a sequence of points of 
  $C(\Qbar)$ satisfying
  $\varphi_{\lambda_i}^{n_i}(y_{\lambda_i}) = z_{\lambda_i}$ for a
  sequence $(n_i)_{i=1}^\infty$ of positive integers with
  $\lim_{i\to\infty} n_i = \infty$. Then
\begin{equation*}\label{to-zero}
\lim_{i \to \infty} h_{\varphi_{\lambda_i}}(y_{\lambda_i}) = 0 .
\end{equation*}
\end{lem}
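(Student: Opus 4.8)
The plan is to combine the functoriality of the canonical height under iteration with the Call--Silverman theory of heights in families \cite{CallSilverman}. Write $d=\deg\varphi$, and work on the curve $C$ with its fixed height function $h(\cdot)$. Discard the finitely many indices $i$ for which $\lambda_i$ is a place of bad reduction for $\varphi$ or a pole of $y$ or $z$. For the remaining $i$, since $h_{\varphi_\lambda}=h_{\varphi^{n}_\lambda}$ and the canonical height relative to a degree $e$ map scales by $e$ under that map, the relation $\varphi^{n_i}_{\lambda_i}(y_{\lambda_i})=z_{\lambda_i}$ gives
\[
h_{\varphi_{\lambda_i}}(z_{\lambda_i}) = h_{\varphi_{\lambda_i}}\bigl(\varphi^{n_i}_{\lambda_i}(y_{\lambda_i})\bigr) = d^{\,n_i}\, h_{\varphi_{\lambda_i}}(y_{\lambda_i}),
\]
so $h_{\varphi_{\lambda_i}}(y_{\lambda_i}) = d^{-n_i}\, h_{\varphi_{\lambda_i}}(z_{\lambda_i})$. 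Thus it suffices to show $d^{-n_i}\, h_{\varphi_{\lambda_i}}(z_{\lambda_i})\to 0$.

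Next I would record two consequences of \cite{CallSilverman}. First, an upper bound: the Weil heights of $z_\lambda$ and of the coefficients of $\varphi_\lambda$ are $O(h(\lambda)+1)$, and the difference between the canonical height of $\varphi_\lambda$ and the naive height is bounded by a quantity that is itself $O(h(\lambda)+1)$; hence there is a constant $C$ depending only on $\varphi$ and $z$ with
\[
h_{\varphi_\lambda}(z_\lambda) \le C\,\bigl(h(\lambda)+1\bigr)
\]
for all but finitely many $\lambda\in C(\Qbar)$. Second, the specialization theorem gives $\lim_{h(\lambda)\to\infty} h_{\varphi_\lambda}(y_\lambda)/h(\lambda) = h_\varphi(y)$; since $h_\varphi(y)>0$ by hypothesis, there is a constant $M>1$ with
\[
h_{\varphi_\lambda}(y_\lambda) \ge \tfrac12\, h_\varphi(y)\, h(\lambda) \qquad \text{whenever } h(\lambda)\ge M.
\]

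I would then show that $h(\lambda_i)\le M$ for all large $i$. If not, pass to a subsequence along which $h(\lambda_i)\ge M$; combining the identity above with the two bounds,
\[
\tfrac12\, h_\varphi(y)\, h(\lambda_i) \le h_{\varphi_{\lambda_i}}(y_{\lambda_i}) = d^{-n_i}\, h_{\varphi_{\lambda_i}}(z_{\lambda_i}) \le d^{-n_i} C\,\bigl(h(\lambda_i)+1\bigr).
\]
As $n_i\to\infty$, for large $i$ we have $d^{-n_i}C\le\tfrac14 h_\varphi(y)$, and the inequality forces $2h(\lambda_i)\le h(\lambda_i)+1$, i.e. $h(\lambda_i)\le 1$, contradicting $h(\lambda_i)\ge M>1$. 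Hence $h(\lambda_i)\le M$ for all large $i$, and then $h_{\varphi_{\lambda_i}}(y_{\lambda_i}) = d^{-n_i}\, h_{\varphi_{\lambda_i}}(z_{\lambda_i}) \le d^{-n_i} C\,(M+1)\to 0$ as $i\to\infty$, as desired.

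The step I expect to require the most care is the uniform upper bound $h_{\varphi_\lambda}(z_\lambda)=O(h(\lambda)+1)$ valid for \emph{all} (but finitely many) $\lambda$, rather than merely asymptotically: it is precisely what disposes of the bounded-height case, where Northcott's theorem is unavailable because the field of constants may be $\Qbar$. This is nonetheless routine, being the ``trivial half'' of the Call--Silverman specialization estimate — it follows from the fact that the gap between the canonical and Weil heights of a degree $d$ map is bounded by a constant depending polynomially on the heights of its coefficients, together with the comparability of $h(z_\lambda)$ and the coefficient heights of $\varphi_\lambda$ with $h(\lambda)$.
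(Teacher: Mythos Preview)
Your argument is correct and follows essentially the same route as the paper: both proofs use the identity $h_{\varphi_{\lambda_i}}(y_{\lambda_i}) = d^{-n_i} h_{\varphi_{\lambda_i}}(z_{\lambda_i})$, then invoke Call--Silverman's specialization theorem (\cite[Theorem 4.1]{CallSilverman}) to force $h(\lambda_i)$ to be bounded, and finally use the uniform estimate \cite[Theorem 3.1]{CallSilverman} to bound $h_{\varphi_{\lambda_i}}(z_{\lambda_i})$ once the $\lambda_i$ are confined to bounded height. The only cosmetic difference is that the paper appeals to Theorem~4.1 for both $y$ and $z$ to rule out unbounded $h(\lambda_i)$, whereas you use Theorem~4.1 only for $y$ and feed in the Theorem~3.1 bound for $z$ directly; your version is marginally more economical but not substantively different.
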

\begin{proof}
  For each $i$ we have
\begin{equation}\label{di} h_{\varphi_{\lambda_i}} (y_{\lambda_i}) = \frac{1}{d^{n_i}}
  h_{\varphi_{\lambda_i}} (z_{\lambda_i}) 
\end{equation}
since $h_{\varphi_{\lambda_i}}(\varphi_{\lambda_i}(x)) = d h_{\varphi_{\lambda_i}}(x)$ for
all $x \in \Qbar$.  
Let $h_C$ be a height function corresponding to a divisor of degree
  one on $C$.  Then, by \cite[Theorem
  4.1]{CallSilverman}, we have
\[ \lim_{h_C(t) \to \infty} \frac{h_{\varphi_t}(y_{t})}{h_C(t)} = h_\varphi(y) \]
and
\[ \lim_{h_C(t) \to \infty}
  \frac{h_{\varphi_t}(z_{t})}{h_C(t)} = h_\varphi(z).\] Hence,
the $h_C(\lambda_i)$ must be bounded for all $i$, since otherwise \eqref{di} would
imply that $h_\varphi(y) = 0$.  Now, Theorem 3.1 of \cite{CallSilverman}
states that
\[|h_{\varphi_t}(z_{\lambda_i}) - h_{\varphi}(z)| \leq O(1)(h_C(t) +
  1)\] 
for all $t$ in $C(\Qbar)$. As $h_C(\lambda_i)$ is bounded, we see that $h_{\varphi_{\lambda_i}}(z_{\lambda_i})$ must be
bounded for all $i$.  The Lemma then follows immediately
from \eqref{di}.
\end{proof}

Now we can prove Proposition \ref{FunctionFinite}.

\begin{proof}[Proof of Proposition \ref{FunctionFinite}]
If $a$ is preperiodic, then there are only finitely many places $v$ of $K$ such
that there is an $m$ for which $v(f^m(a) - c) > 0$.  Likewise, if $-a$
is preperiodic then there are only finitely many places $v$ such
that there is an $n$ for which $v(f^n(a) - c) > 0$.  Hence, we may
assume that neither $a$ nor $-a$ is preperiodic.  We now argue by
contradiction.  Suppose there is an infinite sequence of places
$(v_i)_{i=1}^\infty$ such that for each $i$, we have $m_i, n_i$ with
the property that
\[ \min(v_i(f^{m_i}(a) - c), v_i(f^{n_i}(-a) - d)) > 0. \] Let $\lambda_i$ be the
point in $C(\Qbar)$ corresponding to $v_i$.  Then we have
$f_{\lambda_i}^{m_i}(a_{\lambda_i}) = c_{\lambda_i}$ and
$f_{\lambda_i}^{n_i}(-a_{\lambda_i}) = d_{\lambda_i}$.  Since $c$ is
not in the forward orbit of $a$ and $d$ is not in the forward orbit of
$-a$, we see that $\lim_{i \to \infty} m_i = \lim_{i \to \infty} n_i =
\infty$ since any $m_i$ or $n_i$ can only arise for a finite number of
places $v_i$.  Thus, by Lemma \ref{FromCS}, we have 
\[ \lim_{i \to \infty} h_{f_{\lambda_i}} (a_{\lambda_i}) =  \lim_{i \to \infty}
  h_{f_{\lambda_i}} (-a_{\lambda_i}) = 0. \]
Theorem \ref{GYT} then gives a contradiction since $b \not= 0$ and
there are no $i,j > 0$ such that $f^i(a) = f^j(-a)$.  
\end{proof}

\begin{rem}\label{constant-field}
  Although Ghioca and Ye only prove Theorem \ref{GYT} over $\Qbar$,
  their proof works for the algebraic closure of any finitely
  generated extension of $\bQ$.  Any finitely generated extension $M$
  of $\bQ$ can be endowed with a set of places that give rise to
  appropriate height functions on the algebraic closure of $M$ (see
  \cite[Chapter 1]{BG}, for example), and work of Gubler \cite{Gubler}
  extends Yuan's equidistribution theorem \cite{Yuan} for small points over
  $\Qbar$ to this context.  The authors of both \cite{GY} and
  \cite{Gauthier} have confirmed that their proofs can extended with
  essentially no modification to prove that Theorem \ref{GYT} holds
  when $K$ is a function field over any the algebraic closure of any
  finitely generated extension $M$ of $\bQ$ and
  $h_{f_{\lambda_i}} (\pm a_{\lambda_i})$ are canonical heights
  associated to a Weil height for $M$.  Hence, Proposition
  \ref{FunctionFinite} holds over a function field over any field of
  constants of characteristic 0, and the main results of this paper
  hold in that context as well, since nothing else in our arguments
  require that the field of constants of $K$ be $\Qbar$.
\end{rem}

\section{Proof of Main Theorem}\label{main proof}

Let $K$ be a number field or a function field of characteristic $0$ of transcendence degree 1 over an algebraic extension of $\Q$. 
If $K$ is a number field, for the rest of the section we will assume both the $abc$ conjecture for $K$ 
and Vojta's conjecture for blowups of $\P^1 \times\P^1$. 

The proof of Theorem~\ref{thm: main} combines the preliminary arguments from 
throughout the paper with the following proposition, which 
produces primes with certain ramification behavior in $K_n(\beta)$. 
Recall the definitions of Condition R and Condition U from Section \ref{Galois}.
\begin{prop}\label{final prop}
  Let $f(x) = x^3 - 3a^2x + b\in K[x]$. Assume that $a\neq 0$ and that $f$ is not PCF. 
  Let $L$ be a finite extension of $K$.  Let $\alpha_1, \dots, \alpha_s$ be distinct elements of $L$ such that
  $\alpha_i\notin\O_f(a)\cup\O_f(-a)$ and $\alpha_i\notin\O_f(\alpha_j)$ for all $i,j\in\{1,\dots,s\}$. 
  %$f^\ell(\pm a) \not= \alpha_i$ for any $\ell > 0$ and $1 \leq i \leq s$ and $f^\ell(\alpha_i) \not= \alpha_j$ for all $\ell \geq 0$ and any$1 \leq i,j \leq s$ (note this implies $\alpha_1,\dots,\alpha_s$ are distinct). 
  If $b=0$, further assume that $\alpha_i\notin\O_f(-\alpha_j)$ for all $i,j\in\{1,\dots,s\}$. 
  Then, for all sufficiently large $n$, there exist primes $\fp_1,\dots,\fp_s$
   of $L$ such that
\begin{itemize}
\item[(a)] $\fp_i \cap K(\alpha_i)$ satisfies Condition R at
$\alpha_i$ for $n$;
\item[(b)] $\fp_i \cap K(\alpha_j)$ satisfies Condition U at
$\alpha_j$ for $n$ for all $j \not= i$;
\item[(c)] $\fp_i \cap K(\alpha_i)$ does not ramify in $L$.   
\end{itemize}
\end{prop}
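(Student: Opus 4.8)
The plan is to produce, for each fixed $i$, a single prime $\fp_i$ of $L$ whose restriction to $K(\alpha_i)$ witnesses Condition R for $n$ while its restrictions to all other $K(\alpha_j)$ witness Condition U for $n$; the hard work is a counting argument showing such a prime exists for all large $n$. The key observation is that Conditions R and U are phrased entirely in terms of the valuations of the quantities $f^i(\pm a)-\alpha_k$ and of $\alpha_k$, together with good separable reduction. Since $f$ is not PCF, one of the critical orbits (say $\O_f(a)$, after relabeling so that $h_f(a)\geq h_f(-a)$) grows: one has $h_f(a)>0$, as in Lemma \ref{both critical points}, and the ``Roth-$abc$'' estimate (Lemma \ref{from-Roth}) produces at least $(3-\epsilon)3^{n-1}h_f(a)+C_\epsilon$ primes $\fp$ of $K$ (counted by $N_\fp$) for which $v_\fp(f^n(a)-\alpha_i)=1$. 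These are the candidates for $\fp_i\cap K(\alpha_i)$; the goal is to show that after discarding the ``bad'' candidates there is still at least one left.

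The bad candidates fall into a short list of types, each controlled by a height estimate already established: (i) primes of bad or inseparable reduction for $f$, of which there are finitely many, absorbed into $O(1)$; (ii) primes with $v_\fp(\alpha_i)>0$ or $v_\fp(\alpha_j)>0$ for some $j$ --- finitely many since the $\alpha_k$ are fixed; (iii) primes violating condition (c) of Condition R at $\alpha_i$, i.e. with $v_\fp(f^i(a)-\alpha_i)>0$ for some $i<n$, bounded using Lemma \ref{B-delta} (with $\gamma=a$, $\beta=\alpha_i$, legitimate since $\alpha_i\notin\O_f(a)$) by $\delta 3^n h_f(a)+O_\delta(1)$; (iv) primes violating condition (b) of Condition R, i.e. with $v_\fp(f^i(-a)-\alpha_i)>0$ for some $i\leq n$, together with $v_\fp(f^n(a)-\alpha_i)>0$; these are exactly the primes counted by $\cY(n)$ in Lemma \ref{both critical points} (number field case, using $abc$ and Vojta, plus Lemmas \ref{one-orbit}/\ref{odd}) or Proposition \ref{FunctionFinite} (function field case, non-isotrivial), with the extra hypothesis $\alpha_i\notin\O_f(-\alpha_j)$ when $b=0$ precisely handling the odd case via Lemma \ref{odd}; and (v) primes violating Condition U at some $\alpha_j$, $j\neq i$, i.e. with $v_\fp(f^i(\pm a)-\alpha_j)>0$ for some $i\leq n$ --- again bounded, for the $+a$ part by Lemma \ref{both critical points} / Proposition \ref{FunctionFinite} applied with $(\beta_1,\beta_2)=(\alpha_i,\alpha_j)$ and (when relevant) by Lemma \ref{odd}, and for the $-a$ part (when $h_f(-a)$ could be $0$ one instead notes $-a$ preperiodic forces those finitely many) by the same lemmas. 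Choosing $\delta$ small enough that the total of all the $\delta 3^n h_f(a)$ contributions is, say, less than $3^{n-1}h_f(a)$, the leftover count of good primes of $K$ is at least $(3-\epsilon-3\delta\cdot\text{const})3^{n-1}h_f(a)+O_\delta(1)$, which is positive for all large $n$. For each such good prime $\fq$ of $K$, pick a prime $\fp_i$ of $L$ over it with $\fp_i\cap K(\alpha_i)$ unramified in $L$; to arrange condition (c) one just needs one prime of $K(\alpha_i)$ over $\fq$ that is unramified in $L$, which holds for all but the finitely many $\fq$ dividing the discriminant of $L/K(\alpha_i)$. Finally condition (d), $v_{\fp_i\cap K(\alpha_i)}(f^n(a)-\alpha_i)=1$, follows because $\fq$ was chosen with $v_\fq(f^n(a)-\alpha_i)=1$ and $\fp_i\cap K(\alpha_i)$ lies over $\fq$ with ramification index one in the relevant piece (one must check the ramification index of $\fp_i\cap K(\alpha_i)$ over $\fq$ is $1$; since $\alpha_i$ generates a fixed extension this fails for only finitely many $\fq$, again absorbed). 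Running the argument simultaneously but independently for $i=1,\dots,s$ (each uses its own prime $\fp_i$, so there is no conflict) yields all of (a), (b), (c).

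\textbf{Main obstacle.} The delicate point, and the one requiring the deepest input, is step (iv)/(v): bounding the primes $\fp$ at which $v_\fp(f^n(a)-\alpha_i)>0$ \emph{and simultaneously} $v_\fp(f^m(-a)-\alpha_j)>0$ for some $m\leq n$. The naive bound $h(f^n(a)-\alpha_i)\approx 3^n h_f(a)$ is exactly the size of the good-prime count from Lemma \ref{from-Roth}, so one cannot afford to lose a constant fraction there --- one genuinely needs the \emph{gcd}-type cancellation saying these two orbits share only $o(3^n)$ worth of primes. In the number field case this is the conditional dynamical-gcd bound of Huang (Theorem \ref{dynamical gcd bound}), which forces the hypotheses $abc$ and Vojta, together with the Xie/Medvedev--Scanlon/Ritt analysis (Lemma \ref{not generic}) handling the non-generic case $f^i(a)=f^j(-a)$; in the function field case it is the André--Oort input of Ghioca--Ye via Proposition \ref{FunctionFinite}, which forces non-isotriviality. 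The $b=0$ (odd) case is a genuine extra subtlety: there $f^m(-a)=-f^m(a)$, so the two critical orbits are not ``independent'' and one must instead compare $f^n(a)-\alpha_i$ with $f^m(a)-(-\alpha_j)$, which is why the hypothesis $\alpha_i\notin\O_f(-\alpha_j)$ is imposed and why Lemma \ref{odd} is invoked rather than Lemma \ref{both critical points}. Everything else is bookkeeping with the height inequalities of Section \ref{heights}.
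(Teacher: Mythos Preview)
Your strategy is the paper's: start from Lemma~\ref{from-Roth} to get a large supply of candidate primes with $v_\fp(f^n(a)-\alpha_i)=1$, then sieve out the bad ones and show the remainder is positive for large $n$. The identification of the main obstacle (the dynamical gcd bound, requiring Huang/Vojta in the number field case and Ghioca--Ye in the function field case) is correct.

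A few mislabelings are worth correcting. First, since the $\alpha_i$ lie in $L$ and not necessarily in $K$, the sieve should be run over primes of $L$ throughout; the paper observes that Conditions~R and~U for $\fp$ descend to $\fp\cap K(\alpha_i)$. Second, for your type (iii), Lemma~\ref{B-delta} alone only covers the range $m\leq n-B_\delta$; to cover $n-B_\delta<m<n$ one uses that $v_\fp(f^m(a)-\alpha_i)>0$ together with $v_\fp(f^n(a)-\alpha_i)>0$ forces $v_\fp(f^{n-m}(\alpha_i)-\alpha_i)>0$, a finite set of primes since $\alpha_i$ is not periodic. This is exactly Lemma~\ref{from-5.1} with $\beta_1=\beta_2=\alpha_i$, which is what the paper invokes. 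Third, in your type (v) you have the two halves swapped: the ``$+a$ part'' (primes with $v_\fp(f^m(a)-\alpha_j)>0$ for some $m\leq n$ while $v_\fp(f^n(a)-\alpha_i)>0$) is handled by the elementary Lemma~\ref{from-5.1} with $\beta_1=\alpha_j$, $\beta_2=\alpha_i$ (and the trivial $m=n$ case, where $\alpha_i\equiv\alpha_j\pmod\fp$), not by the gcd bound; it is the ``$-a$ part'' that needs Lemma~\ref{both critical points}, Proposition~\ref{FunctionFinite}, or Lemma~\ref{odd}. With these corrections your outline coincides with the paper's proof.
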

\begin{proof}
For any $1\leq i\leq s$, let $\mathcal{A}_i(n)$ be the set of primes 
$\fp$ of $L$ such that (a), (b), and (c) hold. If a prime $\fp$ 
of $L$ satisfies condition R or condition U at $\alpha_i$ for $n$, then it is easy 
to see that the prime $\fp\cap K(\alpha_i)$ of $K(\alpha_i)$ also satisfies condition R or 
condition U at $\alpha_i$ for $n$. Therefore we will establish Conditions R and U for primes of $L$ rather than 
primes of the various $K(\alpha_i)$, which will make the argument less cumbersome to state. 
Thus all sums below are indexed by primes of $\fo_L$ as in Section~\ref{heights}.

There are only finitely many primes $\fp$ of $\fo_L$ for which $f$ does not have good separable reduction at $\p$, $v_\fp(\alpha_i)\neq 0$ for some $i$, or $\fp\cap K(\alpha_i)$ ramifies in $L$ for some $\alpha_i$. The contributions of these primes to our estimates will be absorbed into the constant term $C_\delta'$ at the end of the proof.

Choose $i$ and $j$ with $1\leq i,j\leq s$ (and possibly $i=j$). By Lemma \ref{from-Roth}, for any $\delta>0$ there is a 
constant $C_\delta$ such that 
\begin{equation}\label{starting point}
\sum_{v_\fp(f^n(a)-\alpha_i)=1} N_\fp\geq (3-\delta)3^{n-1}h_f(a) + C_\delta.
\end{equation}
Let $\cX(n)$ be the set of primes $\fp$ with $\min(v_\fp(f^n(a)-\alpha_j)>0, v_\fp(f^m(a)-\alpha_i))>0$ for some $1\leq m\leq n-1$. By Lemma \ref{from-5.1} with $\gamma=a$, $\beta_1=\alpha_i$, and $\beta_2=\alpha_j$, we have
\begin{equation}\label{a earlier}
\sum_{\fp\in\cX(n)} N_\fp \leq \delta 3^nh_f(a) + O_\delta(1),
\end{equation}
 Furthermore, if $j\neq i$, then the set of primes $\fp$ such that $\min(v_\fp(f^n(a)-\alpha_i),v_\fp(f^n(a)-\alpha_j))>0$ is a finite set depending only on $\alpha_i$ and $\alpha_j$ (and not on $n$), because $\alpha_i\equiv\alpha_j\pmod{\fp}$ for such $\fp$. So
\begin{equation}\label{a same}
\sum_{\min(v_\fp(f^n(a)-\alpha_i),v_\fp(f^n(a)-\alpha_j))>0} N_\fp = O_\delta(1).
\end{equation}
Let $\cY(n)$ be the set of $\fp$ with $\min(v_\fp(f^n(a)-\alpha_i), v_\fp(f^m(-a)-\alpha_j))>0$ for 
some $1\leq m\leq n$. 
If $b=0$, then applying Lemma~\ref{odd} with $\beta_1=\alpha_i$ and $\beta_2=\alpha_j$ gives 
\begin{equation}\label{-a number field}
\sum_{\fp\in \cY(n)}N_\fp\leq \delta 3^n h_f(a) + O_\delta(1).
\end{equation}
If $b\neq 0$ and $K$ is a number field, then applying Lemma \ref{both critical points} with 
$\beta_1=\alpha_i$ and $\beta_2=\alpha_j$, again we arrive at \eqref{-a number field}. 
If $b\neq 0$ and $K$ is a function field, then 
instead of Lemma \ref{both critical points}, we invoke Proposition \ref{FunctionFinite} 
with $c=\alpha_i$ and $d=\alpha_j$. This gives us the stronger statement that there 
are only finitely many primes $\fp$ of $K$ such that both $v_\fp(f^n(a)-\alpha_i)>0$ 
and $v_\fp(f^m(-a)-\alpha_j)>0$ for any $1\leq m\leq n$. So for function fields, we actually have
\begin{equation}\label{-a function field}
\sum_{\fp\in \cY(n)}N_\fp = O_\delta(1),
\end{equation}
though we do not require the full strength of this bound.

Starting with \eqref{starting point} for some choice of $1\leq i\leq s$, for each $j$ with $1\leq j\leq s$ we subtract \eqref{a earlier} and either \eqref{-a number field} or \eqref{-a function field}, and for all $j\neq i$ we subtract \eqref{a same}. This sieves out all $\fp\notin\mathcal{A}_i(n)$ from the sum in \eqref{starting point}, and we have
\begin{equation}
\sum_{\fp\in\mathcal{A}_i(n)} N_\fp\geq 3^nh_f(a)(1-\delta/3-2s\delta) +C_\delta'
\end{equation}
where $C_\delta'$ is a constant obtained by combining all the $O_\delta(1)$ terms. For sufficiently large $n$, we can choose some $\delta$ to make the RHS positive. Repeating this process for each $i$, we are done.
\end{proof}

We are now ready to prove Theorem~\ref{thm: main}. Let $f\in K[x]$ be a degree 3 polynomial and let $\beta\in K$. Assume that $f$ is not postcritically finite, that $\beta$ is not postcritical for $f$, that $(f,\beta)$ is eventually stable, and that $f$ has distinct finite critical points $\gamma_1,\gamma_2$ with $f^n(\gamma_1)\neq f^n(\gamma_2)$ for any $n\geq 1$. By Proposition~\ref{reductions}, we can replace $K$ with a finite extension and replace $f$ with 
a change of variables so that $f(x)=x^3-3a^2x + b$ for $a,b\in K$. As we assumed the critical points of $f$ are distinct, we have $a\neq 0$. Without loss of generality, assume $h_f(a)\geq h_f(-a)$, and note this implies $h_f(a)>0$.
\begin{proof}[Proof of Theorem~\ref{thm: main}]
Since $f$ is eventually stable, by Proposition~\ref{eventual stability facts} there exists some $N$ such that for any $\alpha\in f^{-N}(\beta)$, the pair $(f,\alpha)$ is stable. Let $\alpha_1,\dots,\alpha_{3^N}$ be the elements of $f^{-N}(\beta)$. Note that $|f^{-N}(\beta)|=3^N$ because we have assumed that $\beta$ is not postcritical for $f$, so $f^N(x)-\beta$ has distinct roots in $\overline{K}$ for all $N\geq 1$. 

We argue that $\alpha_1,\dots,\alpha_{3^N}$ satisfy the assumptions of Proposition~\ref{final prop}.
As $\beta$ is not postcritical, no $\alpha_i$ lies in $\O_f(a)$ or $\O_f(-a)$, and all $\alpha_i$ are distinct. 
If $f^\ell(\alpha_i)= \alpha_j$ holds for $i\neq j$ and $\ell\geq 1$, then $f^n(\alpha_i)=f^{n-\ell}(\alpha_j)=\beta$ for some $m<n$ and $f^\ell(\beta)=\beta$, contradicting eventual stability of $(f,\beta)$ by Proposition~\ref{eventual stability facts}. Likewise, if $b=0$ and $f^\ell(-\alpha_i)= \alpha_j$, then $-f^\ell(\alpha_i)=\alpha_j$ because $f$ is odd, whence $f^n(\alpha_j)=f^{n-\ell}(-f^\ell(\alpha_i))=-f^n(\alpha_i)$, so $\beta=-\beta$ and $\beta=0$. Then $f(\beta)=\beta$, again contradicting eventual stability. Thus Proposition~\ref{final prop} holds. For all sufficiently large $n$,
there exist primes $\fp_1,\dots,\fp_{3^N}$ of $K_N(\beta)$ satisfying
conditions (a) through (c) of Proposition \ref{final Galois} for $\ba
= (\alpha_1, \dots, \alpha_{3^N})$.  Since $f^n(x) - \alpha_i$ is
irreducible over $K(\alpha_i)$, condition (d) holds as well. 
Thus, Proposition \ref{final
  Galois} implies that $|\Gal(K_n(\ba)/ K_{n-1}(\ba))| =6^{3^{N+n-1}}$.
Since $K_n(\ba) = K_{N+n}(\beta)$ and $K_{n-1}(\ba) =
K_{N+n-1}(\beta)$, we thus have 
\[\frac{|G_{N+n}(\beta))|}{|G_{N+n-1}(\beta))}  =    \Gal(K_{N+n}(\beta)/
K_{N+n-1}(\beta))| = \frac{|\Aut(T_{N+n})|}{|\Aut(T_{N+ n-1})|} \] 
for all
sufficiently large $n$, which means that the index of $G_\infty(f,
\beta)$ in $\Aut(T_\infty)$ must be finite, as desired.
\end{proof}

\section{The stunted tree}\label{The stunted tree}

Let $f\in K(x)$ and $\beta\in\P^1(\Kbar)$. We define the \emph{stunted tree} 
$T^{s}_n(\beta)$ inductively as follows in 
terms of its $n$th \emph{level} $\cL_n(\beta)$. For $n=0$, set 
$T^s_0(\beta)=\cL_0(\beta)=\{\beta\}$. For $n\geq 1$, define $\cL_n(\beta)$ by
\[
\cL_n(\beta)=\{z\in\P^1(\Kbar):f^n(z)=\beta\}\setminus T^{s}_{n-1}(\beta).
\]
In other words, $\cL_n(\beta)$ consists of the ``strict'' inverse images
of $\beta$ under $f^n$, the $z \in f^{-n}(\beta)$ that did not appear
as inverse images of $\beta$ under $f^m$ for some $m < n$.  
Then define
\[
T^{s}_n(\beta)=\bigcup_{i=0}^n \cL_i(\beta),
\]
noting that this union is disjoint by definition. The idea behind this construction is that $T^{s}_n(\beta)$ is the ``actual" tree of preimages of $\beta$ as 
opposed to the ``idealized" tree of preimages $T_n(\beta)$. Each point in $T^{s}_n(\beta)$ occurs precisely once, 
as points are not repeated in the tree due to periodicity or 
to account for ramification at critical points. In particular, $\cL_n(\beta)$ is 
a set and not a multiset. For example, see Figure~\ref{T^s_2 examples} and compare with 
Figure~\ref{T_2 examples} from Section~\ref{Background}.

\begin{figure}[ht]
\centering
\subfloat{\includegraphics[scale=.65]{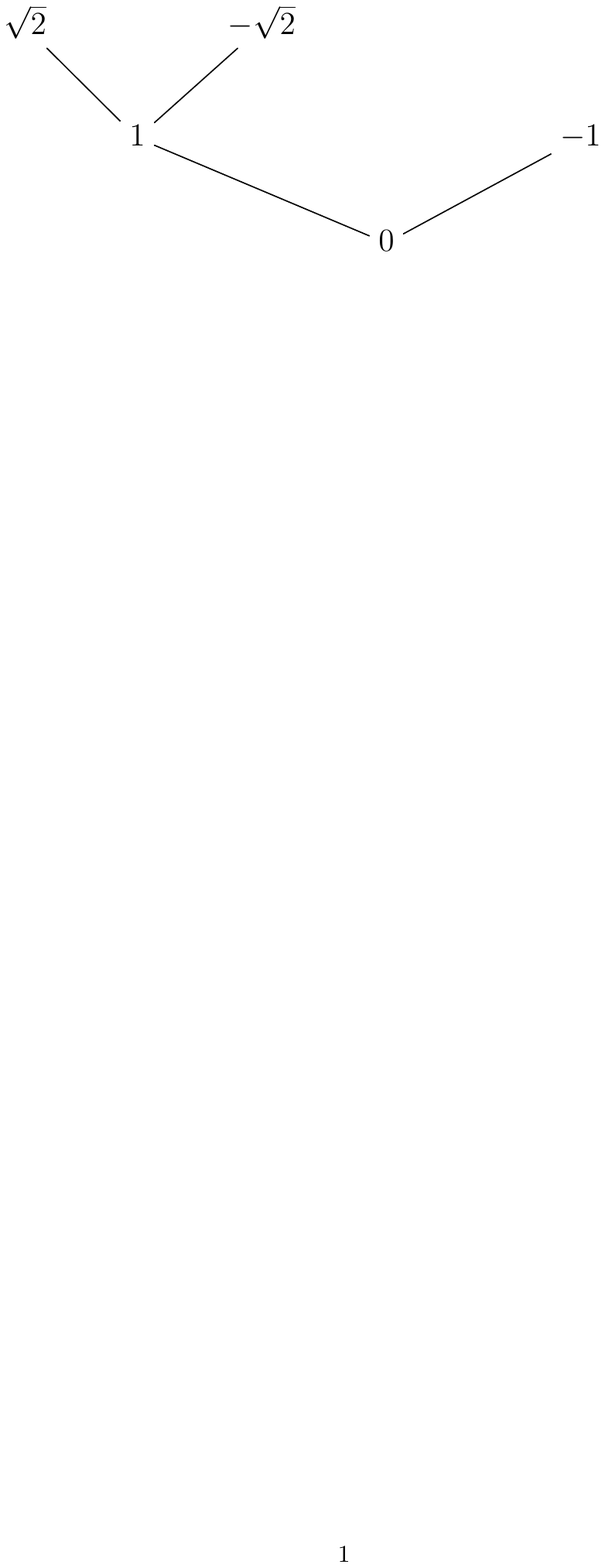}}\hspace{.75in}
\subfloat{\includegraphics[scale=.65]{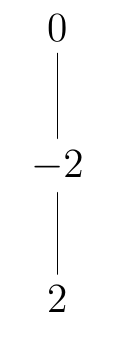}}
\caption{$T^s_2(0)$ for $x^2-1$ and $T^s_2(2)$ for $x^2-2$}
\label{T^s_2 examples}
\end{figure}

Let $T^{s}_\infty(\beta)$ be the direct limit of the $T^{s}_n(\beta)$. 
As before, the group $G_n(\beta)$ acts faithfully on the tree $T^{s}_n(\beta)$, and this action 
commutes with the action of $f$ on $T^{s}_n(\beta)\setminus\beta$, so 
there is an injection $G_n(\beta)\hookrightarrow \Aut(T^{s}_n(\beta))$ for each $n$. Taking 
inverse limits, there is an injection $G_\infty(\beta)\hookrightarrow\Aut(T^{s}_\infty(\beta))$.

For $n\in \N\cup\{\infty\}$, there is a surjective morphism of rooted
trees $T_n(\beta)\to T^{s}_n(\beta)$ where the fiber over each
$z\in T^{s}_n(\beta)$ is the set of vertices in $T_n(\beta)$ labeled
by $z$.  Thus $T^s_n(\beta)$ is a quotient tree of $T_n(\beta)$.  The
quotient morphism is an isomorphism if and only if no element of
$\P^1(\Kbar)$ occurs more than once in $T_\infty$; this holds in turn
if and only if $\beta$ is not periodic and not postcritical for $f$.
Also note that the morphism is Galois-equivariant. So there are
injections $\Aut(T^{s}_n(\beta))\hookrightarrow \Aut(T_n(\beta))$ for
each $n\in\N\cup\{\infty\}$, and the image of the arboreal Galois
representation $\rho_{f,\beta}:\Gal(\Kbar/K)\to\Aut(T_\infty)$ is
contained in the subgroup $\Aut(T^{s}_\infty(\beta))$.  If
$[\Aut(T_\infty):\Aut(T^{s}_\infty(\beta))]=\infty$, then
$G_\infty(\beta)$ cannot have finite index in $\Aut(T_\infty)$, but it
may still be possible that $G_\infty(\beta)$ has finite index in
$\Aut(T^s_\infty(\beta))$. Thus we have a variant of our original
finite index problem, which does not appear to have been studied
before.  
\begin{question}\label{stunted finite index question}
When is $[\Aut(T^{s}_\infty(\beta)):G_\infty(\beta)]<\infty$?
\end{question}
If $\beta$ is periodic or postcritical, then Question~\ref{stunted
  finite index question} may have a positive answer even though
Question~\ref{main question} cannot.

We define a variant of eventual stability that can hold even when $\beta$ is periodic 
or postcritical for $f$.
\begin{defn}
We say $(f,\beta)$ is \emph{tree-stable} if there exists $n$ such that, for each $\alpha\in \cL_n(\beta)$, the 
pair $(f,\alpha)$ is stable.
\end{defn}
If the pair $(f,\beta)$ is eventually stable, then it is also tree-stable by 
Proposition~\ref{eventual stability facts}. Of course, 
the converse fails in general.

In Theorem~\ref{finite index in stunted tree} we give a nearly complete answer to 
Question~\ref{stunted finite index question} for cubic polynomials. We prove a set of necessary and sufficient 
conditions for $G_\infty(\beta)$ to have finite index in $\Aut(T^{s}_\infty(\beta))$ when $K$ is a function field, 
provided that we exclude the possibility that $f$ is odd and $\beta=0$. 
If $K$ is a number field, 
the proof is conditional on $abc$ and Vojta's conjecture just as in Theorem~\ref{thm: main}.

The basic strategy to prove a finite index result in $\Aut(T^s_\infty(\beta))$ is the same as for $\Aut(T_\infty)$. 
That is, 
we show that $\Gal(K_n(\beta)/K_{n-1}(\beta))$ is maximal for all large $n$, which in this case means
\[
\Gal(K_n(\beta)/K_{n-1}(\beta))\cong \Aut(T^s_{n}(\beta)/T^s_{n-1}(\beta)),
\]
where $\Aut(T^s_{n}(\beta)/T^s_{n-1}(\beta))$ is the set of all automorphisms of 
$T^s_n(\beta)$ that act trivially on $T^s_{n-1}(\beta)$. Then finite index in $\Aut(T^s_\infty(\beta))$ 
follows easily from the profinite structure of $G_\infty(\beta)$.

There is one more issue that arises in this context. If $f$ is odd
and there is a $\gamma$ such that both $\gamma$ and $-\gamma$ are in
$T^s_\infty(\beta)$, then $-\beta \in T^s_\infty(\beta)$ so there is
some $\ell$ such that $f^\ell(-\beta) = \beta$.  This means that
$f^{2 \ell}(\beta) = - f^\ell(\beta) = \beta$ so $\beta$ is in a
periodic cycle that is mapped to itself by $\sigma: x \mapsto -x$.
Hence the entire tree $ T^s_\infty(\beta)$ must be stable under
$\sigma: x \mapsto - x$.  This forces
$[\Aut(T_\infty^s(\beta): G_\infty(\beta)] = \infty$ (see Proposition
\ref{partial} for details in a more general context).  This is similar to the
dynamical analogs of complex multiplication described by Jones~\cite[Section
3.4]{RafeArborealSurvey}.

\begin{thm}\label{finite index in stunted tree}
  Let $K$ be a number field or a function field of characteristic $0$
  and transcendence degree one over an algebraic extension of $\Q$.
  Let $f(x) = x^3 - 3a^2x + b \in K[x]$ and let $\beta\in K$.  If $K$ is a
  number field, assume the $abc$ conjecture for $K$ and Vojta's
  conjecture for blowups of $\P^1 \times\P^1$. If $K$ is a function
  field, assume that $f$ is not isotrivial.  If $f$ is odd, suppose
  furthermore that there is no $\gamma$ such that both $\gamma$ and
  $-\gamma$ are in $T^s_\infty(\beta)$. 
  
  The following are equivalent:
  \begin{enumerate}
  \item The pair $(f,\beta)$ is tree-stable, $f$ is not PCF, $f$ has
    distinct finite critical points $a$, $-a$, and we have
    $f^n(a)\neq f^n(-a)$ for all $n\geq 1$.
   \item The group $G_\infty(\beta)$ has finite index in $\Aut(T^{s}_\infty(\beta))$.
  \end{enumerate}
\end{thm}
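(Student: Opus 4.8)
The plan is to mirror the proof of Theorem~\ref{thm: main} almost line for line, substituting the stunted tree $T^s_\infty(\beta)$ for $T_\infty(\beta)$ and ``tree-stable'' for ``eventually stable'', and to isolate the genuinely new phenomena: the possibility that $\beta$ is periodic or postcritical, and the odd/$\beta=0$ obstruction excluded by hypothesis. First I would prove the implication (2)$\Rightarrow$(1) by an analogue of Proposition~\ref{obstructions}: if $f$ is PCF, or has a single finite critical point (ruled out here since the critical points are $\pm a$ with $a\ne 0$ once we are in normal form — though one should note that in the stunted setting this reduction needs care when $b\ne 0$ vs.\ $b=0$), or $f^n(a)=f^n(-a)$ for some $n$, then by the work of Grell~\cite{GG} and Pink~\cite{PinkQuadraticInfiniteOrbits} the profinite iterated monodromy group of $f$ already has infinite index in $\Aut(T^s_\infty)$ of the generic preimage tree, hence so does $G_\infty(\beta)$. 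Tree-stability is necessary because if it fails then $f^n(x)-\alpha$ has an unbounded number of factors for $\alpha\in\cL_n(\beta)$, so $\Gal(K_n(\beta)/K_{n-1}(\beta))$ misses a positive fraction of $\Aut(T^s_n/T^s_{n-1})$ infinitely often; this is the analogue of Proposition~2.2/3.3 of~\cite{RafeAlonEventualStability} and should follow by the same counting-of-cosets argument sketched in the introduction.

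For the substantive direction (1)$\Rightarrow$(2), I would after applying Proposition~\ref{reductions} assume $f(x)=x^3-3a^2x+b$ in normal form with $a\ne0$ and $h_f(a)\geq h_f(-a)>0$. Tree-stability gives an $N$ such that $(f,\alpha)$ is stable for every $\alpha\in\cL_N(\beta)$; list these as $\alpha_1,\dots,\alpha_s$ (they are automatically distinct, since $\cL_N(\beta)$ is a set, not a multiset). The crucial verification is that $\alpha_1,\dots,\alpha_s$ satisfy the hypotheses of Proposition~\ref{final prop}: that $\alpha_i\notin\O_f(\pm a)$, that $\alpha_i\notin\O_f(\alpha_j)$, and — when $b=0$ — that $\alpha_i\notin\O_f(-\alpha_j)$. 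The first holds because if $f^k(a)=\alpha_i$ then $\beta=f^{N+?}(a)$ would be postcritical, but wait — in the stunted setting $\beta$ \emph{is} allowed to be postcritical, so here I would instead argue that the $\alpha_i$ lie in $\cL_N(\beta)$, i.e.\ are strict $N$-th preimages, and trace through how the postcritical/periodic structure of $\beta$ interacts with membership of $\alpha_i$ in $\O_f(\pm a)$ or $\O_f(\alpha_j)$; the point is that any such coincidence forces $\beta$ to be periodic \emph{and} forces a relation incompatible with tree-stability (analogously to the argument in the proof of Theorem~\ref{thm: main}, where $f^\ell(\beta)=\beta$ contradicts eventual stability, here one uses tree-stability). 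The odd/$\beta=0$ hypothesis is precisely what lets us rule out the $\alpha_i\notin\O_f(-\alpha_j)$ failure in the $b=0$ case, since $f^\ell(-\alpha_i)=\alpha_j$ would put both $\gamma$ and $-\gamma$ in $T^s_\infty(\beta)$ for $\gamma=\alpha_i$.

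Once the hypotheses of Proposition~\ref{final prop} are checked, the rest is formal: for all large $n$ there exist primes $\fp_1,\dots,\fp_s$ of $L=K_N(\beta)$ satisfying Conditions R and U as in (a)--(c), condition (d) of Proposition~\ref{final Galois} holds because $(f,\alpha_i)$ is stable, hence $\Gal(K_n(\ba)/K_{n-1}(\ba))\cong S_3^{s3^{n-1}}$. Here $K_n(\ba)=K_{N+n}(\beta)$, and the remaining task is to identify $S_3^{s3^{n-1}}$ with $\Aut(T^s_{N+n}(\beta)/T^s_{N+n-1}(\beta))$: this requires knowing $|\cL_{N+n-1}(\beta)|=s3^{n-1}$, i.e.\ that once we are past level $N$, no further collapsing of the tree occurs. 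That follows from tree-stability together with the assumption $f^n(a)\neq f^n(-a)$ and $f$ not PCF — these guarantee that $\pm a$ do not appear in $\O_f(\alpha_i)$ beyond controlled levels, so each $\alpha_i$ has exactly $3^{n}$ strict $n$-th preimages. I expect this last bookkeeping — precisely pinning down the size and shape of $T^s_\infty(\beta)$ below and above level $N$, and the careful case analysis (periodic $\beta$, postcritical $\beta$, $b=0$ vs.\ $b\ne0$) needed to verify the Proposition~\ref{final prop} hypotheses — to be the main obstacle; the height-theoretic and Galois-theoretic machinery is imported wholesale and requires no change.
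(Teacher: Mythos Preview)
Your overall architecture matches the paper's, but there is a real gap in how you verify the hypotheses of Proposition~\ref{final prop}, and your necessity argument is more convoluted than it needs to be.

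For the verification that $\alpha_i\notin\O_f(\pm a)$: you correctly notice that the argument from Theorem~\ref{thm: main} (``$\beta$ would be postcritical'') breaks down, but the replacement you propose---tracing through the periodic/postcritical structure and deriving a contradiction with tree-stability---does not work. If $f^k(a)=\alpha_i$, this says only that $\alpha_i$ is postcritical; nothing about tree-stability or periodicity of $\beta$ rules that out directly. The paper's resolution is much simpler and is the step you are missing: since $f$ is not PCF, only finitely many points of $\P^1(\Kbar)$ are postcritical or periodic, so one may simply enlarge $N$ until no element of $\cL_N(\beta)$ is periodic or postcritical. Then $\alpha_i\notin\O_f(\pm a)$ is immediate. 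The same choice handles the bookkeeping you flag at the end: once the $\alpha_i$ are neither periodic nor postcritical, each $T^s_\infty(\alpha_i)$ is the full $3$-ary tree and $|\cL_{N+n}(\beta)|=|\cL_N(\beta)|\cdot 3^n$ automatically.

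For $\alpha_i\notin\O_f(\alpha_j)$: your claim that ``such a coincidence forces a relation incompatible with tree-stability'' is based on a false analogy. In Theorem~\ref{thm: main}, periodicity of $\beta$ contradicts eventual stability by Proposition~\ref{eventual stability facts}(3); but tree-stability is \emph{designed} to be compatible with periodic $\beta$, so deriving periodicity yields no contradiction. The paper instead uses the defining property of $\cL_N(\beta)$: if $f^\ell(\alpha_i)=\alpha_j$ with $0<\ell\leq N$, then $f^{N-\ell}(\alpha_j)=f^N(\alpha_i)=\beta$, contradicting that $\alpha_j$ is a \emph{strict} $N$-th preimage; the case $\ell>N$ forces $\alpha_j$ into the periodic cycle of $\beta$, contradicting the choice of $N$ above.

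For (2)$\Rightarrow$(1): rather than rerunning Grell/Pink and the obstructions argument in the stunted setting, the paper reduces to Theorem~\ref{thm: main} by picking any $\gamma$ in the tree that is neither periodic nor postcritical (such $\gamma$ exist unless $f(x)=x^3$). Then $T^s_\infty(\gamma)=T_\infty(\gamma)$, finite index of $G_\infty(\beta)$ in $\Aut(T^s_\infty(\beta))$ forces finite index of $G_\infty(\gamma)$ in $\Aut(T_\infty(\gamma))$, and Proposition~\ref{obstructions} applies verbatim.
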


\begin{proof}
Assume that $\beta$ is either periodic or postcritical for $f$, as otherwise 
the theorem follows immediately from Theorem~\ref{thm: main}.

The necessity of the the first three conditions in (1) is follows from
their necessity in Theorem~\ref{thm: main}.  In particular,
$f^{-1}(\beta) \not= \{ \beta \}$ (since if it was then $f(x) = x^3$,
which is post-critically finite), so we may choose a $\gamma$ that is
not periodic or post-critical such that $f^n(\gamma) = \beta$ for some
$n$.  If $G_\infty(\beta)$ has finite index in
$\Aut(T^{s}_\infty(\beta))$, then certainly $G_\infty(\gamma)$ has
finite index in $\Aut(T_\infty(\gamma))$, but as seen in Proposition
\ref{reductions}, that can only happen if $f$ is not PCF, $f$ has
distinct finite critical points $\gamma_1,\gamma_2$, and
$f^n(\gamma_1)\neq f^n(\gamma_2)$ for all $n\geq 1$.

We turn to the problem of showing the listed conditions are
sufficient.  We may choose an $N$ such that none of the elements in
$\cL_N(\beta)$ are periodic are post-critical or periodic and such
that $f^n(x) - \gamma$ is irreducible for all
$\gamma \in \cL_N(\beta)$.

Let $\{ \alpha_1, \dots, \alpha_t \}$ denote the distinct elements of
$\cL_N(\beta)$.  Then $f^N(\alpha_i) = \beta$ for all $\alpha_i$ and
$f^m(\alpha_i) \not= \beta$ for all $m < N$.  If
$f^\ell(\alpha_i) = \alpha_j$ for $\ell > 0$, then
$f^{N-\ell}(\alpha_j) = \beta$, a contradiction.  Thus
$\alpha_i \notin \O_f(\alpha_j)$ for $i \not = j$.  Since no $\alpha_i$
is in $\O_f(a)$ or $\O_f(-a)$, we thus see that if $b \not= 0$, then
$\alpha_1, \dots, \alpha_t$ satisfy the conditions of Proposition
\ref{final prop}.  If $b = 0$, we
see that if $f^\ell(-\alpha_j) = \alpha_i$ for some $\ell > 0$ then
$\alpha_j$ and $-\alpha_j$ are both in $T^s_\infty(\beta)$, a
contradiction.  Thus, the conditions of Proposition
\ref{final prop} are also met in this case. 
 
Letting $\ba = (\alpha_1, \dots,
\alpha_t)$ and applying Proposition \ref{final Galois}, as in the
proof of Theorem \ref{thm: main}, we then see that for all
sufficiently large $n$, we have
\[ \frac{|G_{N+n}(\beta)|}{|G_{N+n-1}(\beta)|} =
    \frac{|\Aut(T^s_{N+n}(\beta))|}{|\Aut(T^s_{N+n-1}(\beta))|}, \]
and our proof is complete.  

\end{proof}

\section{The multitree}\label{The multitree}

Let $f\in K(x)$ with $\deg f\geq 2$. Throughout the paper we have studied the Galois theory of the 
fields generated by taking preimages of one point $\beta$ under $f$. In this section, 
we simultaneously consider a collection of $s$ (not necessarily disjoint) trees rooted at distinct points  
$\beta_1,\dots,\beta_s\in\P^1(\Kbar)$. To ease notation, 
set ${\bf B} = \{\beta_1,\dots,\beta_s\}$. 
Recall the definition of $\cL_n(\beta_j)$ from Section~\ref{The stunted tree} 
as the $n$th level of a stunted tree rooted at $\beta_j$. 
Define
\[
\cM_n({\bf B})= \bigcup_{i=0}^n \bigcup_{j=1}^s \cL_n(\beta_j)
\]
and
\[
G_n({\bf B}) = \Gal(K(\cM_n({\bf B} ))/K({\bf B} )).
\]
We refer to $\cM_n({\bf B})$ as a \emph{multitree}. See Figure~\ref{multitree example} for an example.
\begin{figure}[ht]
\centering
\subfloat{\includegraphics[scale=.65]{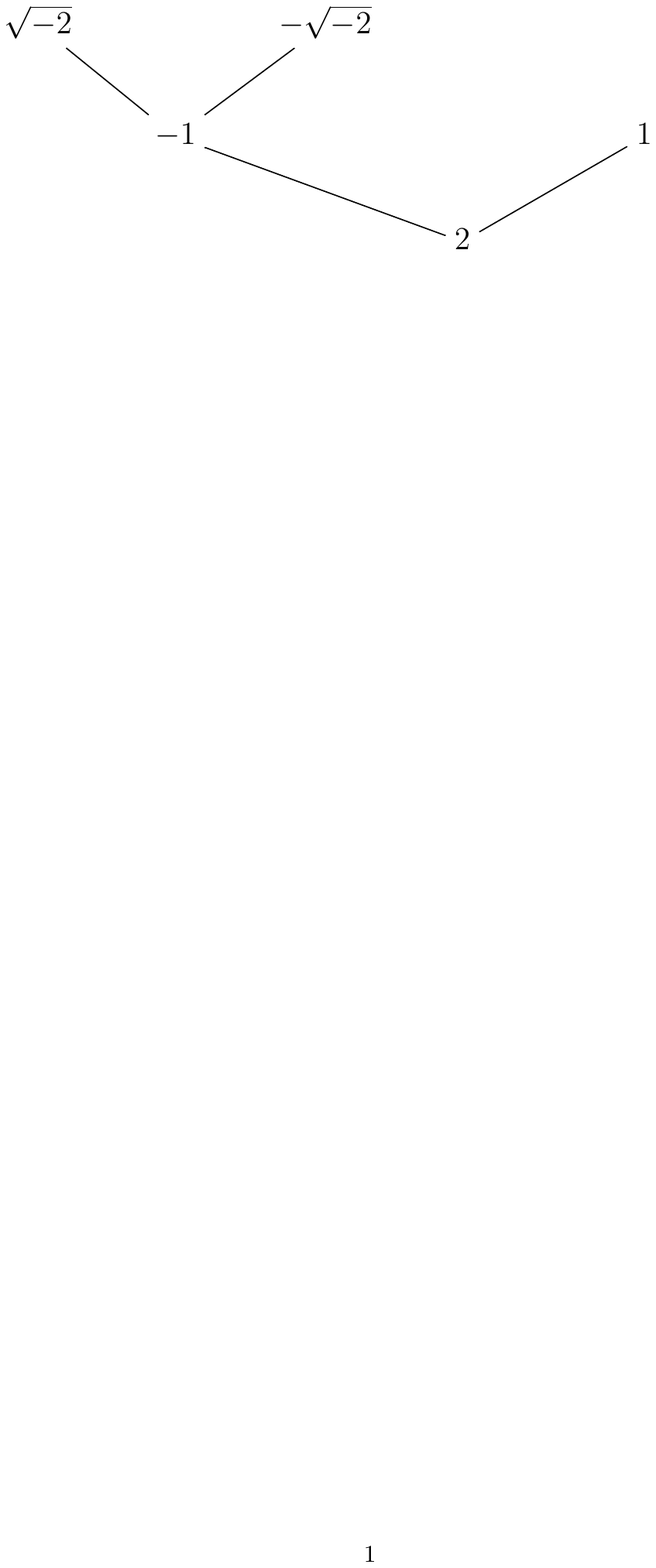}}\hspace{.5in}
\subfloat{\includegraphics[scale=.65]{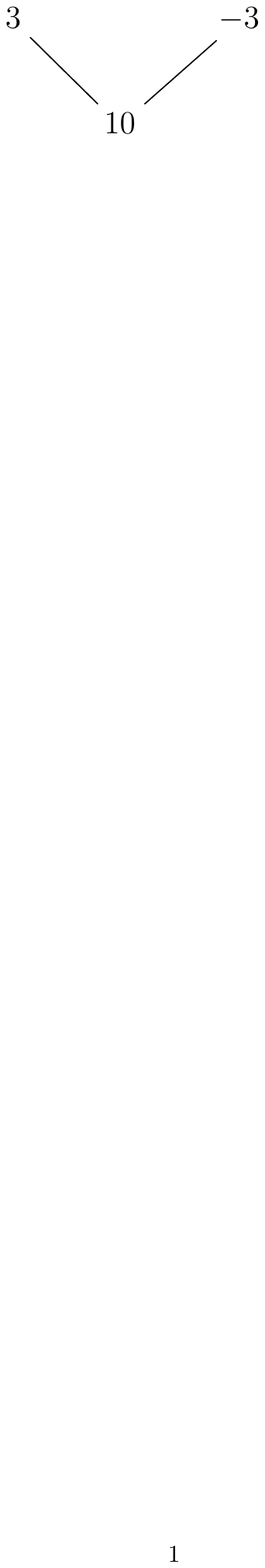}}
\caption{$\cM_1(-1,2,10)$ for $x^2+1$}
\label{multitree example}
\end{figure}
As usual, define $\cM_\infty({\bf B})$ to be 
the direct limit of the $\cM_n({\bf B})$ and $G_\infty({\bf B})$ to be the inverse limit 
of the $G_n({\bf B})$ as $n\to\infty$. For each $n$, $G_n({\bf B}) $ acts faithfully on $\cM_n({\bf B})$ in the usual way. So there are injections 
$G_n({\bf B}) \hookrightarrow\Aut(\cM_n({\bf B}))$, and thus an injection 
$G_\infty({\bf B}) \hookrightarrow\Aut(\cM_\infty({\bf B}))$, where an automorphism of the 
multitree must fix each root. Once again we have a finite index question: 
\begin{question}
When is $[\Aut(\cM_\infty({\bf B})):G_\infty({\bf B})]<\infty$? 
\end{question}
We partially answer this question for cubic polynomials in Theorem~\ref{multitree theorem}, 
which is a multitree version of Theorem~\ref{finite index in stunted tree}. As in 
Theorem~\ref{finite index in stunted tree}, we exclude certain cases arising from
odd polynomials (and our proof is conditional on $abc$ and Vojta's conjecture when $K$ is a number field). 
First we make some remarks about the structure of $\Aut(\cM_\infty({\bf B}))$.

Each stunted tree $T^s_\infty(\beta_i)$ injects into $\cM_\infty({\bf B})$, and the root of $T^s_\infty(\beta_i)$ 
maps to a root of $\cM_\infty({\bf B})$. 
If no $\beta_i$ is in the forward orbit of any other $\beta_j$, then $\cM_\infty$ is precisely the disjoint 
union of the rooted trees $T^s_\infty(\beta_i)$ for $1\leq i\leq s$ (this is known as a \emph{forest}). 
In general, there is a root-preserving surjective morphism of graphs
\[
\bigsqcup_{i=1}^s T^s_\infty(\beta_i)\to \cM_\infty({\bf B})
\]
where the fiber over each $z\in \cM_\infty({\bf B})$ is the set of all vertices in the various trees labeled 
by $z$. This morphism is an isomorphism if and only if the trees $T^s_\infty(\beta_i)$ are disjoint. 
If the trees intersect, then we can describe $\cM_\infty({\bf B})$ as follows: 
partition ${\bf B}$ into classes based on membership in the same grand orbit, where 
grand orbits of $f$ in $\Kbar$ are equivalence classes of the relation $\sim$ defined by 
$y\sim z$ if there exist $n,m\geq 0$ such that $f^n(y)=f^m(z)$. From each class, 
choose an element $\beta$ that is ``farthest forward" 
in its grand orbit, that is, such that either $\beta$ is periodic or 
$f^n(\beta)\notin{\bf B}$ for all $n>0$. 
Let ${\bf A} = \{\gamma_1,\dots,\gamma_{w}\}$ be the subset of ${\bf B}$ 
consisting of the chosen class representatives.
Now we have the equality of unrooted graphs
\[
\cM_\infty({\bf B}) = \cM_\infty({\bf A}) = \bigsqcup_{i=1}^w T^s_\infty(\gamma_i),
\]
but the points $\beta_i$ are also marked as roots within each tree $T^s_\infty(\gamma_i)$, and are not moved
by any element of $\Aut(\cM_\infty({\bf B}))$. 

We describe the automorphism group of this disjoint union of trees. Let 
$A_i$ be the subgroup of $\Aut(T^s_\infty(\gamma_i))$ that acts as the identity on the elements of ${\bf B}$ that 
lie in $T^s_\infty(\gamma_i)$. It is easy to see that $A_i$ has finite index in $\Aut(T^s_\infty(\gamma_i))$.  
The group $A_i$ sits as a normal subgroup in 
$\Aut\left(\bigsqcup_{i=1}^w T^s_\infty(\gamma_i)\right)$, and any 
$g\in \Aut\left(\bigsqcup_{i=1}^w T^s_\infty(\gamma_i)\right)\setminus\prod_{i=1}^w\Aut(T^s_\infty(\gamma_i))$ must 
move some basepoint $\gamma_i$, i.e. $g(\gamma_i)\neq \gamma_i$ for some $i$. On the other hand, 
specifying a permutation of the set ${\bf A}$ and an automorphism of each stunted tree completely 
determines an automorphism of the disjoint union. Therefore
\[
\Aut(\cM_\infty({\bf B})) \cong \prod_{i=1}^w A_i\rtimes H
\]
for some subgroup $H$ of the symmetric group $\text{Sym}({\bf A}) \cong S_w$. More precisely, $H$ consists 
of the permutations in $S_w$ that preserve the partition of ${\bf A}$ defined by grouping together 
those basepoints $\gamma_i$ such that $A_i$ is isomorphic.  As in the
case of Theorem \ref{finite index in stunted tree}, there are some
added complications when $f$ is odd.

\begin{thm}\label{multitree theorem}
  Let $K$ be a number field or a function field of characteristic $0$
  and transcendence degree one over an algebraic extension of $\Q$.  Let
  $f(x) = x^3 - 3a^2 x + b \in K[x]$ with $\deg f=3$ and let $\beta_1,\dots,\beta_s \in K$ be
  distinct.  If $K$ is a number field, assume the $abc$ conjecture for
  $K$ and Vojta's conjecture for blowups of $\P^1 \times\P^1$. If $K$
  is a function field, assume that $f$ is not isotrivial. If $f$ is an
  odd polynomial, assume furthermore that there is no $\gamma$ such
  that both $\gamma$ and $-\gamma$ are in $\cM_\infty({\bf B})$. 
   
The following are equivalent:
    \begin{enumerate}
  \item Each pair $(f,\beta_i)$ is tree-stable for $1\leq i\leq s$,
   $f$ is not PCF, 
   $f$ has distinct finite critical points $a, -a$, and
   $f^n(a)\neq f^n(-a)$ for all $n\geq 1$.

   \item The group $G_\infty({\bf B})$ has finite index in $\Aut(\cM_\infty({\bf B}))$.
  \end{enumerate}
\end{thm}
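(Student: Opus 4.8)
The plan is to mirror the architecture of the proofs of Theorem~\ref{thm: main} and Theorem~\ref{finite index in stunted tree}. First I would invoke Proposition~\ref{reductions} to replace $K$ by a finite extension and change coordinates so that $f(x)=x^3-3a^2x+b$ with $a\neq 0$; by that proposition none of these moves affects whether $G_\infty({\bf B})$ has finite index. For $(2)\Rightarrow(1)$ I would argue by restriction: every $\beta_i$ lies in $K$, so $G_\infty({\bf B})$ fixes each root of $\cM_\infty({\bf B})$, and under the structural isomorphism $\Aut(\cM_\infty({\bf B}))\cong\prod_{i=1}^w A_i\rtimes H$ this puts $G_\infty({\bf B})$ inside $\prod_{i=1}^w A_i$; hence if $G_\infty({\bf B})$ has finite index so does each of its projections to $A_i$, so $G_\infty(\beta_i)$ has finite index in $\Aut(T^s_\infty(\beta_i))$ for every $i$. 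The assertions that $f$ is not PCF, that $f$ has distinct finite critical points $a,-a$, that $f^n(a)\neq f^n(-a)$ for all $n$, and that each $(f,\beta_i)$ is tree-stable then follow exactly as in the necessity half of Theorem~\ref{finite index in stunted tree} (and ultimately from Proposition~\ref{obstructions}).

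The substantive direction is $(1)\Rightarrow(2)$. I would pass first to the grand-orbit representatives $\gamma_1,\dots,\gamma_w$ defined before the theorem, so that $\cM_\infty({\bf B})=\bigsqcup_{i=1}^w T^s_\infty(\gamma_i)$ as an unrooted graph. Using tree-stability of each $(f,\gamma_i)$ and the fact that $f$ is not PCF (so $a,-a$ are not preperiodic), I would choose $N$ large enough that, for every $i$: no element of $\cL_N(\gamma_i)$ is periodic or postcritical, $(f,\alpha)$ is stable for each $\alpha\in\cL_N(\gamma_i)$, and $f^n(x)-\alpha$ is irreducible over $K(\alpha)$ for all $n$; this is the bookkeeping from the proof of Theorem~\ref{finite index in stunted tree}, carried out simultaneously for all $\gamma_i$. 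Let $\alpha_1,\dots,\alpha_t$ be the distinct elements of $\bigcup_{i=1}^w\cL_N(\gamma_i)$, set $\ba=(\alpha_1,\dots,\alpha_t)$, and note $\ba\subseteq L:=K(\cM_N({\bf B}))$, a finite extension of $K$.

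I would then check that $\ba$ satisfies the hypotheses of Proposition~\ref{final prop}. No $\alpha_j$ lies in $\O_f(a)\cup\O_f(-a)$ since $\alpha_j$ is not postcritical (and consequently no preimage of $\alpha_j$ is a critical point). If $f^\ell(\alpha_j)=\alpha_k$ with $\ell>0$, applying $f^N$ gives $f^\ell(\gamma_{j'})=\gamma_{k'}$ for the corresponding representatives, forcing $j'=k'$ and $\gamma_{j'}$ periodic; then $\ell\leq N$ (else $\alpha_k$ would lie on the periodic cycle, contradicting non-periodicity), so $f^{N-\ell}(\alpha_k)=\gamma_{j'}$ with $0\leq N-\ell\leq N-1$, i.e.\ $\alpha_k\in T^s_{N-1}(\gamma_{j'})$, contradicting $\alpha_k\in\cL_N(\gamma_{j'})$. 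In the odd case $b=0$, if $f^\ell(-\alpha_j)=\alpha_k$ with $\ell>0$, applying $f^N$ puts $-\alpha_j$ in the backward orbit of $\gamma_{k'}$, so both $\alpha_j$ and $-\alpha_j$ lie in $\cM_\infty({\bf B})$, contradicting the standing hypothesis. So Proposition~\ref{final prop} applies: for all large $n$ it produces primes $\fp_1,\dots,\fp_t$ of $L$ satisfying (a)--(c), and since $(f,\alpha_i)$ is stable, condition (d) of Proposition~\ref{final Galois} also holds, giving $\Gal(K_n(\ba)/K_{n-1}(\ba))\cong S_3^{t3^{n-1}}$. Translating to the multitree as in the proof of Theorem~\ref{thm: main} --- using $K(\cM_{N+n}({\bf B}))=K(\cM_{N+n-1}({\bf B}))\cdot K_n(\ba)$, and observing that the layer of $\cM_\infty({\bf B})$ above level $N$ is everywhere $3$-ary (no $\alpha_i$ is postcritical), so that $S_3^{t3^{n-1}}$ is exactly $\Aut(\cM_{N+n}({\bf B})/\cM_{N+n-1}({\bf B}))$ --- yields
\[
\frac{|G_{N+n}({\bf B})|}{|G_{N+n-1}({\bf B})|}=\frac{|\Aut(\cM_{N+n}({\bf B}))|}{|\Aut(\cM_{N+n-1}({\bf B}))|}
\]
for all large $n$, and the profinite structure of $G_\infty({\bf B})$ then forces $[\Aut(\cM_\infty({\bf B})):G_\infty({\bf B})]<\infty$.

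I expect the only real obstacle to be bookkeeping, in two places: (i) the orbit-disjointness checks for $\ba$ --- especially recognizing that the $\gamma,-\gamma$ hypothesis is exactly what is needed in the odd case --- and (ii) the step identifying $\Gal(K_n(\ba)/K_{n-1}(\ba))$ with the \emph{full} layer group of the multitree rather than with a subgroup whose projections to the individual trees $T^s_\infty(\gamma_i)$ happen to be full; the input that makes the various trees ``independent'' is precisely Proposition~\ref{final Galois}, designed to handle a tuple of roots at once. Everything else is a routine adaptation of the single-tree arguments.
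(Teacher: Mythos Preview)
Your proposal is correct and follows essentially the same approach as the paper: pass to grand-orbit representatives $\gamma_1,\dots,\gamma_w$, choose $N$ large so that the $\alpha_i\in\bigcup_i\cL_N(\gamma_i)$ are non-periodic, non-postcritical, and give stable pairs, verify the orbit-disjointness hypotheses of Proposition~\ref{final prop} (including the $\gamma,-\gamma$ case when $b=0$), and then apply Proposition~\ref{final Galois} to conclude that the layer groups are maximal for large $n$. Your verification of the condition $\alpha_k\notin\O_f(\alpha_j)$ is in fact more detailed than the paper's, which disposes of it in a single sentence by noting that $\alpha_j,\alpha_k$ are either in distinct grand orbits or are strict $N$th preimages of the same $\gamma$.
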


\begin{proof}
  Let ${\bf A}=\{\gamma_1,\dots,\gamma_w\}$ be a set of grand orbit
  representatives for ${\bf B}$ as in the discussion before the theorem.

  If any one of the conditions fails, then by Theorem~\ref{finite
    index in stunted tree}, one of the groups $G_\infty(\beta_i)$ has
  infinite index in $\Aut(T^s_\infty(\beta_i))$. As in the proof of
  Theorem~\ref{finite index in stunted tree}, this forces
  $G_\infty(\beta_i)$ to have infinite index in
  $\Aut(T^s_\infty(\gamma_j))$ (where $\gamma_j$ is the chosen grand
  orbit representative for $\beta_i$) and thus infinite index in
  $A_j$. From the discussion of the structure of
  $\Aut(\cM_\infty({\bf B}))$ before the statement of the theorem, it
  follows easily that $G_\infty({\bf B})$ has infinite index in
  $\Aut(\cM_\infty({\bf B}))$.

  Now assume that all of the conditions hold.  Take $N$ to be large
  enough so that ${\bf B}\subseteq \cM_N({\bf A})$ and that, for
  $1\leq i\leq w$, each $\alpha \in \cL_N(\gamma_i)$ is not periodic
  and not postcritical and has the property that $f^n(x) - \alpha$ is
  irreducible over $K(\alpha)$ for all $n$. Then, in particular, for
  every positive integer $n$ and every $\alpha \in \cL_N(\gamma_i)$,
  the tree $T^s_\infty(\alpha)$ is the full $d$-ary tree
  $T_\infty(\alpha)$. Therefore
\[
K(\cM_{N+n}({\bf A}))=K(\cM_N({\bf A}))
\left(\bigsqcup_{i=1}^w\bigsqcup_{z\in \cL_n(\alpha_i)} T_n(z)\right).
\]
By our choice of $N$, we have
$K(\cM_n({\bf B}))\subseteq K(\cM_{N+n}({\bf A}))$; thus
$K(\cM_\infty({\bf A}))=K(\cM_\infty({\bf B}))$ and
$G_\infty({\bf B}) =\Gal(K(\cM_\infty({\bf A}))/K({\bf B}))$.  

We now argue exactly as in the proof of Theorem~\ref{finite index in
  stunted tree}. We write 
\[ \bigcup_{i = 1}^w \cL_N(\gamma_i) = \{ \alpha_1 , \dots, \alpha_t
  \} \] We cannot have $f^\ell(\alpha_i) = \alpha_j$ for $\ell > 0$
and $i \not= j$ since $\alpha_i, \alpha_j$ are either in different
grand orbits or are strict $N$-th inverse images of the same
$\gamma \in {\bf A}$.  If $f$ is odd, then there is no $\ell$ such
that $f^{\ell}(-\alpha_j) = \alpha_i$ since otherwise we would have
that $\alpha_j$ and $-\alpha_j$ are both in $\cM_\infty({\bf B})$.

Letting $\ba = (\alpha_1, \dots,
\alpha_t)$ and applying Proposition \ref{final Galois}, as in the
proof of Theorem \ref{thm: main}, we then see that for all
sufficiently large $n$, we have 
\[ \frac{|\Gal(K_n(\ba))|}{|\Gal(K_{n-1}(\ba))|} =
    \frac{|\Aut(\bigsqcup_{i=1}^w T^s_{N+n}(\gamma_i))|}{{|\Aut(\bigsqcup_{i=1}^w T^s_{N+n-1}(\gamma_i))|}}, \]
and our proof is complete, exactly as in the proof of Theorem \ref{finite index in
  stunted tree}.  

\end{proof}

We also have the partial converse.  

\begin{prop}\label{partial}
  With notation as above, suppose that $f(x) = x^3 - 3a^2 x$ (i.e.,
  $f$ is odd) with $a \not= 0$ and that there exists $\gamma \in \Kbar$ such that both
  $\gamma$ and $-\gamma$ are in $\cM_\infty({\bf B})$.  Then
  $[\Aut(\cM_\infty({\bf B})):G_\infty({\bf B}) )]= \infty$.
\end{prop}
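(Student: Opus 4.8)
The plan rests on the fact that, since $f$ is odd, the involution $\sigma\colon x\mapsto -x$ commutes with $f$ and is defined over $\Q\subseteq K$. Hence every element of $G_\infty({\bf B})=\Gal(K(\cM_\infty({\bf B}))/K({\bf B}))$ commutes with $\sigma$ on $\Kbar$, and the goal is to turn this, together with the hypothesis that some $\gamma$ and $-\gamma$ both lie in $\cM_\infty({\bf B})$, into the statement that $G_\infty({\bf B})$ is confined to a subgroup of $\Aut(\cM_\infty({\bf B}))$ of infinite index. This is the ``dynamical complex multiplication'' phenomenon alluded to in the discussion preceding Theorem~\ref{finite index in stunted tree}.

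First I would locate the symmetry. Let $\gamma_i,\gamma_j\in{\bf A}$ be the roots of the grand-orbit trees $T^s_\infty(\gamma_i)$ and $T^s_\infty(\gamma_j)$ that contain $\gamma$ and $-\gamma$, labelled so that $\gamma_i=f^a(\gamma)$ and $\gamma_j=f^b(-\gamma)$ with $a\le b$. Then $f^b(\gamma)=-\gamma_j$, and since $a\le b$ and $f$ is odd we get $\gamma_j=f^{\,b-a}(-\gamma_i)$; in particular $-\gamma_i\in\cM_\infty({\bf B})$, so — as $\cM_\infty({\bf B})$ is closed under taking $f$-preimages — the entire backward orbit of $-\gamma_i$ lies in $\cM_\infty({\bf B})$, and $\sigma$ restricts to a level-preserving graph isomorphism of $T^s_\infty(\gamma_i)$ onto the subtree of $T^s_\infty(\gamma_j)$ hanging below $-\gamma_i$. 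Moreover, any $g\in G_\infty({\bf B})$ fixes the marked roots $\gamma_i,\gamma_j$, hence also fixes $-\gamma_i=\sigma(\gamma_i)$ and preserves that subtree, acting on it as $\sigma\circ(g|_{T^s_\infty(\gamma_i)})\circ\sigma^{-1}$. When $i=j$, a short computation shows $\gamma_i$ is periodic with $\sigma$-stable cycle and $\sigma$ restricts to a \emph{nontrivial} automorphism $\tau$ of $T^s_\infty(\gamma_i)$ (the case $\gamma=0$, in which $f(0)=0$ forces $0\in{\bf B}$ and $\sigma$ swaps the children $\pm a\sqrt3$ of the root of $T^s_\infty(0)$, falls here).

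Next I would extract the index bound using the structural isomorphism $\Aut(\cM_\infty({\bf B}))\cong\big(\prod_{k=1}^{w}A_k\big)\rtimes H$ with $H$ finite: since $\prod_k A_k$ has finite index, it suffices to bound below the index of the image of $G_\infty({\bf B})$ in the factor $A_i$ (when $i=j$) or $A_i\times A_j$ (when $i\ne j$). If $i\ne j$, the commutation relation confines that image to the set $D$ of pairs $(g,h)\in A_i\times A_j$ such that $h$ fixes $-\gamma_i$ and agrees with $\sigma g\sigma^{-1}$ on the backward orbit of $-\gamma_i$; projecting $D$ onto its first coordinate shows $[A_i\times A_j:D]$ equals the index in $A_j$ of the automorphisms that are trivial on the backward orbit of $-\gamma_i$, which is infinite because that backward orbit is an infinite tree. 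If $i=j$, the image lies in the centralizer $C_{A_i}(\tau)$, and a level-by-level count finishes: since $\tau$ acts nontrivially on some level of the infinite tree $T^s_\infty(\gamma_i)$, it acts nontrivially on every deeper level, and for each sufficiently large $n$ the subgroup of $\Aut(T^s_n(\gamma_i)/T^s_{n-1}(\gamma_i))$ — a product of copies of $S_3$ — commuting with the induced involution has index at least $2$ (a transposition forced by a $\tau$-orbit of size $2$ among level-$(n-1)$ vertices, or the centralizer of a non-identity element of $S_3$ on the children of a $\tau$-fixed vertex), so $[A_i:C_{A_i}(\tau)]$ grows at least like $2^{n}$. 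In either case $[\Aut(\cM_\infty({\bf B})):G_\infty({\bf B})]=\infty$.

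The step I expect to be the main obstacle is the bookkeeping in the second paragraph: $\gamma$ is not a priori one of the marked vertices of the multitree, so $\sigma$ need not preserve $\cM_\infty({\bf B})$ nor fix its base points, and one must push $\gamma$ and $-\gamma$ forward to the roots of their grand-orbit trees and verify that $-\gamma_i$ genuinely re-enters $\cM_\infty({\bf B})$ (and, when $i=j$, that the ambient cycle is $\sigma$-stable and $\tau$ nontrivial) before any usable $\sigma$-equivariant subtree structure appears. Once that is in place the two index computations are routine arboreal arguments; a minor additional point is to check that the finitely many marked interior base points and the finite tree-permuting factor $H$ cause no interference, which they do not once one works at levels deeper than all of those base points.
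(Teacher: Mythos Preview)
Your overall strategy --- exploit the commutation of every Galois element with the involution $\sigma\colon x \mapsto -x$ to trap $G_\infty({\bf B})$ in an infinite-index subgroup --- is exactly right, and your $i \neq j$ case is essentially sound. But the $i=j$ case has a real gap: when $\gamma_i \neq 0$, your level-by-level centralizer count presumes that $\tau = \sigma|_{T^s_\infty(\gamma_i)}$ is level-preserving, and it is not. If $\gamma_i$ has period $p$ (necessarily even, with $-\gamma_i$ at position $p/2$ in the cycle), then $\tau$ sends the root at level $0$ to $-\gamma_i$ at level $p/2$, and more generally sends a non-periodic vertex at level $\ell$ to one at level $\ell + p/2$. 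So there is no ``induced involution'' on $\Aut(T^s_n(\gamma_i)/T^s_{n-1}(\gamma_i))$, and the factor-of-$2$ count never gets started. The conclusion $[A_i : C_{A_i}(\tau)] = \infty$ is still true, but your stated justification does not establish it. You flagged the forward-pushing bookkeeping as the main obstacle; in fact that part works, and the difficulty is in the index computation you called routine.

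The paper sidesteps all of this by pulling \emph{back} rather than pushing forward. One chooses $\alpha$ in the backward orbit of $\gamma$ with $\alpha$ (hence also $-\alpha$) neither periodic nor postcritical, and passes to the finite extension $K(\alpha)$. Then $T_\infty(\alpha)$ and $T_\infty(-\alpha)$ are disjoint full ternary subtrees of $\cM_\infty({\bf B})$, so $\Aut(T_\infty(\alpha)) \times \Aut(T_\infty(-\alpha))$ sits inside $\Aut(\cM_\infty({\bf B}))$. Since $\sigma$ transposes the two trees and commutes with Galois, any element of $\Gal(\Kbar/K)$ acting trivially on $T_\infty(\alpha)$ must act trivially on $T_\infty(-\alpha)$; thus the Galois image in the product is contained in the graph of an isomorphism, which has infinite index. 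This single move handles both of your cases at once, with no cycle combinatorics and no level bookkeeping --- and it is also exactly how one would repair your $i=j$ argument.
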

\begin{proof}
We may choose $\alpha, -\alpha$ such that $f^n(\alpha) = \gamma$ and
neither $\alpha$ nor $-\alpha$ is periodic or post-critical (note that
this implies in particular that $\alpha \not= 0$), Since
$\Gal(\Kbar/K(\alpha))$ has finite index in $\Gal(\Kbar/K)$, we may
suppose that $\alpha, -\alpha \in K$.  Then the trees
$T_\infty(\alpha)$ and $T_\infty(-\alpha)$ are both stable under the
action of $\Gal(\Kbar/K)$, and the map $\sigma: x \mapsto -x$
transposes $T_\infty(\alpha)$ and $T_\infty(-\alpha)$.  Hence, any
element of  $\Gal(\Kbar/K)$ that acts trivially on $T_\infty(\alpha)$
must act trivially on  $T_\infty(-\alpha)$ as well.  This means that
the image of $\Gal(\Kbar/K)$ in $\Aut(T_\infty(\alpha)) \times
\Aut(T_\infty(-\alpha))$ (induced by the action of $\Gal(\Kbar/K)$ on
$\cM_\infty({\bf B})$) cannot have finite index in $\Aut(T_\infty(\alpha)) \times
\Aut(T_\infty(-\alpha))$.  Since
$\Aut(\cM_\infty({\bf B}))$ contains $\Aut (T_\infty(\alpha)) \times
\Aut (T_\infty(-\alpha))$ as a subgroup, we must therefore have
$[\Aut(\cM_\infty({\bf B})):G_\infty({\bf B}) )]= \infty$, as
desired.  

\end{proof}

\section{The isotrivial case} \label{iso case}

In this section, we treat the case where $f$ is isotrivial.  Recall
that we say that a rational function $\phi$ over a function field $K$
with field of constants $k$ is {\em isotrivial} if there exists
$\sigma \in \Kbar(x)$ of degree one such that
$\sigma \phi \sigma^{-1} \in {\overline k}(x)$.  Note that since one
can always choose a map that sends any given point to the point at
infinity, when $f$ is a polynomial this is equivalent to saying there
is a $\sigma \in \Kbar(x)$ of degree one such that
$\sigma \phi \sigma^{-1} \in {\overline k}[x]$.  We first the case
where $f$ is a cubic with coefficients in $k$ and $\beta$ is
transcendental over $k$.

%% Add that $\beta$ is transcendental
\begin{prop}\label{trans}
  Let $k$ be any field of characteristic 0, and let $f$ be a cubic
  polynomial in $k[x]$ such that $f$ is not PCF, $f$ has distinct
  finite critical points $\gamma_1,\gamma_2$, and
  $f^n(\gamma_1)\neq f^n(\gamma_2)$ for all $n\geq 1$.  Suppose that
  $\beta$ is transcendental over $k$; let $K = k(\beta)$.  Then
  $G_\infty(f, \beta) =  \Aut(T_\infty).$
\end{prop}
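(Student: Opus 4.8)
The plan is to prove directly that \emph{every} layer of the tower is maximal, that is, $\Gal(K_n(\beta)/K_{n-1}(\beta)) \cong \Aut(T_n/T_{n-1}) \cong (S_3)^{3^{n-1}}$ for all $n\geq 1$ (with $K_0(\beta)=K(\beta)$); comparing orders then gives $|G_n(\beta)| = 6^{(3^n-1)/2} = |\Aut(T_n)|$, so $G_n(\beta)=\Aut(T_n)$, and passing to the inverse limit yields $G_\infty(f,\beta)=\Aut(T_\infty)$. First I would reduce. As noted in the introduction, $\Gal(\overline{k}(f^{-n}(\beta))/\overline{k}(\beta))$ is a subgroup of $\Gal(k(f^{-n}(\beta))/k(\beta))$, so it suffices to treat the case $k=\overline{k}$; and I may conjugate $f$ into the form $x^3-3a^2x+b$ with $a\neq 0$. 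Since $\O_f(\pm a)\subseteq k$ while $\beta\notin k$, the point $\beta$ is automatically not postcritical, and since $f^n(x)-\beta$ is linear in $\beta$ it is irreducible in $k[x,\beta]$, hence over $k(\beta)$; thus $(f,\beta)$ is stable, and likewise $(f,\alpha)$ is stable for every $\alpha\in f^{-m}(\beta)$, since any such $\alpha$ is transcendental over $k$. Finally, as $f$ is not PCF at least one critical point is not preperiodic, so after conjugating by $x\mapsto -x$ if necessary I may assume $a$ is not preperiodic.

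The base case $n=1$ is a discriminant computation: $f(x)-\beta = x^3-3a^2x+(b-\beta)$ has discriminant $-27(\beta-f(a))(\beta-f(-a))$, and since $f(a)-f(-a)=-4a^3\neq 0$ this is a separable quadratic in $\beta$, hence not a square in $k(\beta)$; together with irreducibility this gives $\Gal(K_1(\beta)/K(\beta))\cong S_3$. For the inductive step $n\geq 2$, I would apply Proposition~\ref{final Galois} with $\ba=(z_1,\dots,z_{3^{n-1}})$ the roots of $f^{n-1}(x)-\beta$, which lie in the finite extension $L:=K_{n-1}(\beta)$, taking its parameter to be $1$: since $K_1(\ba)=K_n(\beta)$ and $K_0(\ba)=K_{n-1}(\beta)$, this yields $\Gal(K_n(\beta)/K_{n-1}(\beta))\cong S_3^{3^{n-1}}$ once the required primes are produced. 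Condition~(d) of Proposition~\ref{final Galois}, that $f(x)-z_i$ be irreducible over $K(z_i)=k(z_i)$, is automatic from the stability observation above. Because $f(a)\in k$, Condition~R$(d)$ forces the relevant prime of $k(z_i)$ to be the place $z_i=f(a)$; there Conditions~R$(a)$--$(e)$ reduce to $f(a)\notin\{a,-a,0,f(-a)\}$, and $f(a)\neq f(-a)$ since $a\neq 0$, while $f(a)\neq a$ since $a$ is not fixed; if $f(a)\in\{-a,0\}$ I would instead use the place $z_i=f(-a)$ (at least one of the two critical values avoids $0$ together with the other critical point, since otherwise $a=0$). The remaining task is to lift these to primes $\fp_i$ of $L$ so that $\fp_i\cap K(z_i)$ is unramified in $L$ (Condition~R's last clause and Proposition~\ref{final Galois}(c)) and $\fp_i\cap K(z_j)$ satisfies Condition~U at $z_j$ for $1$ whenever $j\neq i$.

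I expect this last step to be the main obstacle. It amounts to controlling the reductions of the remaining roots $z_j$ modulo $\fp_i$: such a reduction lies in $f^{-(n-1)}(f^n(a))$, it cannot equal $a$ (which would force $a$ periodic), and one must also rule out its equalling $-a$, $f(\pm a)$, or $0$, as well as ensure $f(a)$ (resp.\ $f(-a)$) is a \emph{simple} root of $f^{n-1}(x)-f^n(a)$ so that the lift is unique and unramified. This is precisely the cubic-polynomial phenomenon handled by Lemma~\ref{one-orbit}, Lemma~\ref{both critical points}, and Proposition~\ref{FunctionFinite} in the proof of Theorem~\ref{thm: main}: the hypotheses that $f$ is not PCF and that $f^n(\gamma_1)\neq f^n(\gamma_2)$ for all $n$ are exactly what prevent the forward orbits of the two critical points from colliding in a way that would create such repeated factors, so that after discarding the finitely many bad places attached to each layer the desired $\fp_i$ exist. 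Everything else is the formal wreath-product order count together with the two short computations above, so the ramification-disjointness bookkeeping in this paragraph is the technical heart of the proof.
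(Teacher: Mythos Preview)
Your reductions in the first paragraph are correct, and the overall strategy of showing $\Gal(K_n(\beta)/K_{n-1}(\beta))\cong (S_3)^{3^{n-1}}$ for every $n$ is the right one. But your inductive step is both more complicated than necessary and, as written, incomplete.

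The paper applies Proposition~\ref{main Galois} directly, not Proposition~\ref{final Galois}. For each $n$ it exhibits a \emph{single} prime of $K=k(\beta)$ satisfying Condition~R at $\beta$ for $n$, namely $\fp=(\beta-f^n(a))$. Since $f\in k[x]$ and each $f^i(\pm a)\in k$, one has $v_\fp(f^i(\pm a)-\beta)>0$ if and only if $f^i(\pm a)=f^n(a)$ in $k$. After normalizing so that $a$ is not preperiodic and there are no $i\leq j$ with $f^i(-a)=f^j(a)$ (achievable under the hypotheses by the same swap you use, together with the observation that if $-a$ is preperiodic then no such relation can exist), conditions (b)--(d) of Condition~R are immediate, and (a) is automatic since reduction modulo $\fp$ leaves $f\in k[x]$ unchanged. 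Proposition~\ref{main Galois} then gives the layer isomorphism at once, with the ramification bookkeeping you describe already handled internally by its proof.

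Your route---passing to the roots $z_i$ of $f^{n-1}(x)-\beta$ and invoking Proposition~\ref{final Galois} with parameter~$1$---amounts to unpacking Proposition~\ref{main Galois} and reassembling it, which is why the ``main obstacle'' you flag looks hard: you are redoing that proposition's work by hand. More seriously, your proposed resolution of that obstacle does not stand. Lemma~\ref{both critical points} requires $K$ to be a number field (and assumes $abc$ and Vojta), Proposition~\ref{FunctionFinite} requires $f$ to be nonisotrivial over a function field whose constant field is an algebraic extension of $\bQ$, and the height bound in Lemma~\ref{one-orbit} is vacuous when the orbit lies in the constant field. Here $k$ is an arbitrary field of characteristic~$0$ and $f\in k[x]$, so none of these applies. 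The verification needed is the elementary one in the previous paragraph, not a height argument; that is the whole point of treating the isotrivial case separately.
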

\begin{proof}
  After change of variables, we may write $f(x) = x^3 - 3a^2x + b$. 
  Since there is no $n$ such that $f^n(a) = f^n(-a)$ and f is not
  post-critically finite, we may assume that $a$ is not preperiodic
  and that there are no $i,j$ with $i \leq j$ such that
  $f^i(-a) = f^j(a)$.  The polynomial $f^n(x) - \beta$ is easily seen
  to be irreducible over $k(\beta)$; for example, this follows
  immediately by looking at its Newton polygon at the place at
  infinity for $k(\beta)$.  Arguing as in \cite[Section 3]{JKMT} and
  the proof of \cite[Theorem 5.1]{BridyTucker}, we see that for any
  $n$, the prime $(\beta - f^n(a))$ in $K = k(\beta)$ satisfies
  Condition $R$ at $\beta$ for $n$.  Then, as in the proof of Theorem
  \ref{thm: main}, we have that
  $\Gal(K_n(\beta)/K_{n-1}(\beta)) \cong S_3^{3^{n-1}}$.  Hence, by
  induction, $|G_n(f, \beta)| = |\Aut(T_n)|$ for all $n$.
\end{proof}

The following is now an immediate consequence of Proposition \ref{trans}
and Lemma \ref{reductions}.
\begin{cor}\label{iso}
  Let $k$ be any algebraically closed field of characteristic 0, let
  $K$ be a function field of transcendence degree 1 over $k$, and
  let $f$ be a cubic polynomial in $K[x]$ such that $f$ is not PCF,
  $f$ has distinct finite critical points $\gamma_1,\gamma_2$, and
  $f^n(\gamma_1)\neq f^n(\gamma_2)$ for all $n\geq 1$.  Suppose that
  there exists $\sigma \in \Kbar(x)$ of degree one such that
  $\sigma f \sigma^{-1} \in k(x)$.  If $\sigma(\beta) \notin k$, then
  $G_\infty(f, \beta)$ has finite index in $\Aut(T_\infty)$.
\end{cor}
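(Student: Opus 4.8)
The plan is to conjugate $f$ into a cubic polynomial with coefficients in $k$, invoke Proposition~\ref{trans} for a transcendental basepoint, and then transfer the resulting surjectivity back down to $K$ by way of Proposition~\ref{reductions}.

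To begin, by Proposition~\ref{reductions} replacing $K$ by a finite extension does not change whether $G_\infty(f,\beta)$ has finite index in $\Aut(T_\infty)$; so, enlarging $K$, I may assume $\beta\in K$ and $\sigma\in K(x)$, and then $g:=\sigma\circ f\circ\sigma^{-1}$ lies in $K(x)\cap k(x)=k(x)$. Next I would arrange that in fact $g\in k[x]$. Since $f$ is a polynomial, $\infty$ is a totally ramified fixed point of $f$, so $\sigma(\infty)$ is a totally ramified fixed point of $g$; as $k$ is algebraically closed, every fixed point of $g\in k(x)$ lies in $\P^1(k)$, hence $\sigma(\infty)\in\P^1(k)$. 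Post-composing $\sigma$ with a degree-one $\tau\in k(x)\subseteq K(x)$ sending $\sigma(\infty)$ to $\infty$ replaces $g$ by $\tau\circ g\circ\tau^{-1}$, which has $\infty$ as a totally ramified fixed point and coefficients in $k$, hence lies in $k[x]$; and because $\tau\in k(x)$, this modification does not affect the hypothesis that $\sigma(\beta)\notin k$.

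With this normalization $g\in k[x]$ is cubic; it is not PCF (a conjugate of a non-PCF map is non-PCF); its finite critical points are $\sigma(\gamma_1)\neq\sigma(\gamma_2)$, which lie in $k$ since they are the roots of $g'\in k[x]$; and $g^n(\sigma(\gamma_i))=\sigma(f^n(\gamma_i))$ shows $g^n(\sigma(\gamma_1))\neq g^n(\sigma(\gamma_2))$ for all $n\geq 1$. Put $\beta':=\sigma(\beta)$; since $\beta'\notin k$ and $k$ is algebraically closed, $\beta'$ is transcendental over $k$. Thus Proposition~\ref{trans} applies to $g$ and $\beta'$ and gives $G_\infty(g,\beta')=\Aut(T_\infty)$, the index being computed over $k(\beta')$. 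Now $k(\beta')\subseteq K$, and since $K$ is finitely generated over $k$ of transcendence degree one and $\beta'$ is transcendental over $k$, the extension $K/k(\beta')$ is algebraic and finitely generated, hence finite. Finally, because $\sigma,\sigma^{-1}\in K(x)$ we have $K(f^{-n}(\beta))=K(g^{-n}(\beta'))$ for every $n$ and $K(\beta)=K=K(\beta')$, so $G_\infty(f,\beta)$ is literally the iterated Galois group of $g$ over $K$ with root $\beta'$; applying Proposition~\ref{reductions} with ground field $k(\beta')$ and finite extension $K$, the equality $G_\infty(g,\beta')=\Aut(T_\infty)$ over $k(\beta')$ forces $[\Aut(T_\infty):G_\infty(f,\beta)]<\infty$ over $K$, which is the assertion.

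The substance of the argument is contained entirely in Proposition~\ref{trans} and Proposition~\ref{reductions}, so no step is genuinely hard; the two places that call for a little care are the reduction from $g\in k(x)$ to $g\in k[x]$ --- needed because Proposition~\ref{trans} is stated only for polynomials, and relying on $k$ being algebraically closed --- and the observation that $[K:k(\beta')]$ is finite, which is exactly what lets the computation of the Galois group over the transcendental base field $k(\beta')$ control the index over $K$.
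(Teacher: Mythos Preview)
Your proof is correct and is precisely the argument the paper has in mind: the paper states only that the corollary is ``an immediate consequence of Proposition~\ref{trans} and Lemma~\ref{reductions},'' and you have spelled out exactly those two steps, including the reduction to $g\in k[x]$ (which the paper handles in the paragraph just before the corollary) and the observation that $[K:k(\beta')]<\infty$.
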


We have a much more general result in the situation where the
automorphism sending the coefficients of a rational function into the
field of constants also sends $\beta$ into the field of constants.  

\begin{thm}
Let $k$ be any algebraically closed field, and let $K$ be any function
field of transcendence degree 1 over $k$.  Suppose that $\phi$ is a
rational function in $K(x)$ such that there is a $\sigma \in \Kbar(x)$ of
degree one such that $\sigma f \sigma^{-1}  \in k(x)$.  Then, for any
$\beta$ such that $\sigma(\beta) \in k$, the group $G_\infty(\phi, \beta)$
must be finite.
\end{thm}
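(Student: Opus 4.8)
The plan is to show that the \emph{entire} backward orbit of $\beta$ under $\phi$ is contained in a single finite extension of $K$; this forces $K_\infty(\phi,\beta)$ to be a finite extension of $K(\beta)$, whence $G_\infty(\phi,\beta)$ is finite. To set things up, write $g = \sigma\phi\sigma^{-1}\in k(x)$ and $\beta_0 = \sigma(\beta)\in k$. Since $\sigma$ has degree one, an easy induction gives $g^n = \sigma\phi^n\sigma^{-1}$ for all $n\geq 1$, and therefore
\[
\phi^{-n}(\beta) = \sigma^{-1}\bigl(g^{-n}(\beta_0)\bigr),
\]
because $g^n(y) = \beta_0$ holds if and only if $\phi^n(\sigma^{-1}(y)) = \beta$. (Here one counts preimages inside $\P^1$, so the point at infinity is handled along with the rest.)

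Next I would observe that $g^{-n}(\beta_0)\subseteq\P^1(k)$: writing $g^n = P_n/Q_n$ with $P_n,Q_n\in k[x]$, the finite elements of $g^{-n}(\beta_0)$ are roots of $P_n - \beta_0 Q_n\in k[x]$, and the only other possible preimage is $\infty$, so since $k$ is algebraically closed the whole set lies in $\P^1(k)$. Now choose a finite extension $K'$ of $K$ inside $\Kbar$ containing the (finitely many) coefficients of a representative of $\sigma$; then $\sigma$ and $\sigma^{-1}$ are both degree-one maps defined over $K'$, so $\sigma^{-1}(\P^1(k))\subseteq\P^1(K')$ since $k\subseteq K\subseteq K'$. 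Combining this with the displayed identity, $\phi^{-n}(\beta)\subseteq\P^1(K')$ for every $n$, and also $\beta = \sigma^{-1}(\beta_0)\in\P^1(K')$. Hence $K_n(\phi,\beta) = K(\beta)\bigl(\phi^{-n}(\beta)\bigr)\subseteq K'$ for all $n$, so $K_\infty(\phi,\beta)\subseteq K'$, and since $K_\infty(\phi,\beta)$ is Galois over $K(\beta)$ we get $|G_\infty(\phi,\beta)| = [K_\infty(\phi,\beta):K(\beta)]\leq [K':K]<\infty$.

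There is essentially no obstacle in this argument, which is why the statement holds in this generality (no separability, PCF, or characteristic hypothesis is needed, in contrast with Corollary~\ref{iso}, which treats the complementary case $\sigma(\beta)\notin k$). The only points requiring a little care are that $\sigma^{-1}$ is again a degree-one map defined over the same field as $\sigma$, and that the composition identity $g^n = \sigma\phi^n\sigma^{-1}$ together with ``$n$th preimages'' are interpreted on all of $\P^1$ so that $\infty$ causes no trouble.
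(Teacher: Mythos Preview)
Your proof is correct and follows essentially the same line as the paper's: both show that $\sigma$ carries $\phi^{-n}(\beta)$ bijectively to $g^{-n}(\sigma(\beta))\subseteq\P^1(k)$, and then observe that $\sigma^{-1}$ is defined over some finite extension $K'$ of $K$, forcing every preimage of $\beta$ to lie in $K'$. The paper's write-up is terser, but the idea is identical.
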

\begin{proof}
  We argue as in \cite[Theorem 5.1]{BridyTucker}.  For any $n$ and any
  $\alpha$ such that $f^n(\alpha) = \beta$, we have
  $\sigma f^n \sigma^{-1} \sigma (\alpha) = \sigma(\beta) \in k$, so
  $\sigma(\alpha) \in k$, since $k$ is algebraically closed. Thus,
  $\alpha = \sigma^{-1}(z)$ for some $z \in k$.  Since there
  is a finite extension $K'$ of $K$ such that $\sigma \in K'(x)$, we
  see that there is a finite extension $K'$ of $K$ such that for any
  $\alpha$ and $n$ such that $f^n(\alpha) = \beta$, we have $\alpha
  \in K'$, and our proof is complete.
\end{proof}

Since Proposition \ref{obstructions} holds for isotrivial maps, we
now have the following analog of Theorem \ref{thm: main} for isotrivial
maps.

\begin{thm}\label{iso: main}
  Let $f$ be a cubic polynomial in $K[x]$, where $K$ is a function
  field of transcendence degree 1 over an algebraically closed
  field $k$ of characteristic 0.  Let $\beta \in K$. Suppose there exists 
  $\sigma \in \Kbar[x]$ of degree one such that
  $\sigma f \sigma^{-1} \in k[x]$.

 The following are equivalent:
  \begin{enumerate}
  \item  We have $\sigma(\beta) \notin k$,
   $f$ is not PCF, 
   $f$ has distinct finite critical points $\gamma_1,\gamma_2$, and 
   $f^n(\gamma_1)\neq f^n(\gamma_2)$ for all $n\geq 1$.
   \item The group $G_\infty(f, \beta)$ has finite index in $\Aut(T_\infty)$.
  \end{enumerate}
\end{thm}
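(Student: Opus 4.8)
The plan is to obtain both implications by assembling results already proved in this section: Corollary~\ref{iso} for one direction, and Proposition~\ref{obstructions} together with the preceding finiteness theorem for the other. No new machinery is needed; the statement is essentially a repackaging.

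For the implication $(1)\Rightarrow(2)$, observe that our degree-one $\sigma\in\Kbar[x]$ with $\sigma f\sigma^{-1}\in k[x]$ is in particular an element of $\Kbar(x)$ with $\sigma f\sigma^{-1}\in k(x)$, and $k$ is algebraically closed of characteristic $0$ by hypothesis. Hence all the hypotheses of Corollary~\ref{iso} are met: $f$ is a cubic polynomial over $K$ that is not PCF, has distinct finite critical points $\gamma_1,\gamma_2$, and satisfies $f^n(\gamma_1)\neq f^n(\gamma_2)$ for all $n\geq 1$. Since also $\sigma(\beta)\notin k$ by~(1), Corollary~\ref{iso} gives at once that $G_\infty(f,\beta)$ has finite index in $\Aut(T_\infty)$.

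For the implication $(2)\Rightarrow(1)$, I would invoke Proposition~\ref{obstructions}, which is valid for every cubic polynomial, isotrivial or not. Since $[\Aut(T_\infty):G_\infty(f,\beta)]<\infty$, none of the five obstructions there can occur. Failure of obstruction~(1) gives that $f$ is not PCF; failure of obstruction~(4), together with $\deg f=3$ --- for a cubic, the two roots of $f'$ either coincide (``one finite critical point'') or are distinct (``two distinct finite critical points''), and these are the only possibilities --- gives that $f$ has distinct finite critical points $\gamma_1,\gamma_2$; and failure of obstruction~(5) then gives $f^n(\gamma_1)\neq f^n(\gamma_2)$ for all $n\geq 1$. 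It remains only to show $\sigma(\beta)\notin k$: if instead $\sigma(\beta)\in k$, then the theorem immediately preceding this one shows that $G_\infty(f,\beta)$ is a finite group, hence of infinite index in the infinite group $\Aut(T_\infty)$, contradicting~(2). Therefore $\sigma(\beta)\notin k$, and~(1) holds in full.

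The argument presents no genuine obstacle; the only care required is bookkeeping. One must check that the polynomial hypotheses here ($\sigma\in\Kbar[x]$, $\sigma f\sigma^{-1}\in k[x]$) are instances of the slightly broader rational-function hypotheses of Corollary~\ref{iso} and of the preceding theorem, that the field-of-constants assumptions line up ($k$ algebraically closed of characteristic $0$), and that the dichotomy ``one finite critical point'' versus ``two distinct finite critical points'' is exhaustive for cubics. All of the substantive work has already been carried out in Proposition~\ref{trans} and Corollary~\ref{iso} on the one hand, and in Proposition~\ref{obstructions} on the other.
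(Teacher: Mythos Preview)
Your proposal is correct and matches the paper's approach exactly. The paper does not even write out a proof of this theorem; it simply remarks that ``Proposition~\ref{obstructions} holds for isotrivial maps'' and presents the statement as an immediate consequence of Corollary~\ref{iso}, Proposition~\ref{obstructions}, and the preceding finiteness theorem---precisely the ingredients you assemble, with the same division of labor between the two implications.
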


\bibliographystyle{amsalpha}
\bibliography{NewFIC}

\newcommand{\etalchar}[1]{$^{#1}$}
\providecommand{\bysame}{\leavevmode\hbox to3em{\hrulefill}\thinspace}
\providecommand{\MR}{\relax\ifhmode\unskip\space\fi MR }
% \MRhref is called by the amsart/book/proc definition of \MR.
\providecommand{\MRhref}[2]{%
  \href{http://www.ams.org/mathscinet-getitem?mr=#1}{#2}
}
\providecommand{\href}[2]{#2}
\begin{thebibliography}{GKNY17}

\bibitem[BCZ03]{BCZ}
Y.~Bugeaud, P.~Corvaja, and U.~Zannier, \emph{An upper bound for the
  {G}.{C}.{D}.\ of {$a^n-1$} and {$b^n-1$}}, Math. Z. \textbf{243} (2003),
  no.~1, 79--84.

\bibitem[BFH{\etalchar{+}}16]{BFHJY}
R.~L. Benedetto, X.~Faber, B.~Hutz, J.~Juul, and Y.~Yasufuku, \emph{A large
  arboreal {G}alois representation for a cubic postcritically finite
  polynomial}, available at arXiv:1612.03358, 2016.

\bibitem[BG06]{BG}
E.~Bombieri and W.~Gubler, \emph{Heights in {D}iophantine geometry}, New
  Mathematical Monographs, vol.~4, Cambridge University Press, Cambridge, 2006.

\bibitem[BGHT17]{BGHT}
A.~Bridy, D.~Ghioca, L.-C. Hsia, and T.~J. Tucker, \emph{Iterated galois groups
  of quadratic polynomials}, in preparation, 2017.

\bibitem[BGKT12]{CaseOfDML}
R.~L. Benedetto, D.~Ghioca, P.~Kurlberg, and T.~J. Tucker, \emph{A case of the
  dynamical {M}ordell-{L}ang conjecture}, Math. Ann. \textbf{352} (2012),
  no.~1, 1--26, With an appendix by Umberto Zannier.

\bibitem[BIJ{\etalchar{+}}17]{BIJJ}
A.~Bridy, P.~Ingram, R.~Jones, J.~Juul, A.~Levy, M.~Manes,
  S.~Rubinstein-Salzedo, and J.~H. Silverman, \emph{Finite ramification for
  preimage fields of postcritically finite morphisms}, available at
  arXiv:1511.00194, to appear in \emph{Math. Res. Lett.}, 2017.

\bibitem[BJ07]{BostonJonesArboreal}
N.~Boston and R.~Jones, \emph{Arboreal {G}alois representations}, Geom.
  Dedicata \textbf{124} (2007), 27--35.

\bibitem[BJ09]{BostonJonesImage}
\bysame, \emph{The image of an arboreal {G}alois representation}, Pure Appl.
  Math. Q. \textbf{5} (2009), no.~1, 213--225.

\bibitem[BT18]{BridyTucker}
A.~Bridy and T.~J. Tucker, \emph{{$ABC$} implies a {Z}sigmondy principle for
  ramification}, J. Number Theory \textbf{182} (2018), 296--310.

\bibitem[Cre89]{Cremona}
J.~E. Cremona, \emph{On the {G}alois groups of the iterates of {$x^2+1$}},
  Mathematika \textbf{36} (1989), no.~2, 259--261 (1990).

\bibitem[CS93]{CallSilverman}
G.~S. Call and J.~H. Silverman, \emph{Canonical heights on varieties with
  morphisms}, Compositio Math. \textbf{89} (1993), no.~2, 163--205.

\bibitem[CZ05]{CZ05}
P.~Corvaja and U.~Zannier, \emph{A lower bound for the height of a rational
  function at {$S$}-unit points}, Monatsh. Math. \textbf{144} (2005), no.~3,
  203--224.

\bibitem[DLT17]{DLT}
J.~Doyle, A.~Levy, and T.~J. Tucker, \emph{Quadratic polynomials over function
  fields are eventually stable}, in preparation, 2017.

\bibitem[FG16]{Gauthier}
C.~Favre and T.~Gauthier, \emph{Classification of special curves in the space
  of cubic polynomials}, available at arXiv:1603.05126, to appear in {\em
  Internat. Math. Res. Notices}, 2016.

\bibitem[FS96]{Capelli}
B.~Fein and M.~Schacher, \emph{Properties of iterates and composites of
  polynomials}, J. London Math. Soc. (2) \textbf{54} (1996), no.~3, 489--497.

\bibitem[GKNY17]{UnicriticalAndreOort}
D.~Ghioca, H.~Krieger, K.~D. Nguyen, and H.~Ye, \emph{The dynamical
  {A}ndr\'e-{O}ort conjecture: unicritical polynomials}, Duke Math. J.
  \textbf{166} (2017), no.~1, 1--25.

\bibitem[GNT13]{GNT}
C.~Gratton, K.~Nguyen, and T.~J. Tucker, \emph{{$ABC$} implies primitive prime
  divisors in arithmetic dynamics}, Bull. Lond. Math. Soc. \textbf{45} (2013),
  no.~6, 1194--1208.

\bibitem[GNT15]{GNT2}
D.~Ghioca, K.~Nguyen, and T.~J. Tucker, \emph{Portraits of preperiodic points
  for rational maps}, Math. Proc. Cambridge Philos. Soc. \textbf{159} (2015),
  no.~1, 165--186.

\bibitem[Gra98]{GranvilleSquarefrees}
A.~Granville, \emph{{$ABC$} allows us to count squarefrees}, Internat. Math.
  Res. Notices (1998), no.~19, 991--1009.

\bibitem[Gre17]{GG}
G.~Grell, \emph{Profinite iterated monodromy groups for cubic polynomials}, in
  preparation, 2017.

\bibitem[GTZ08]{GTZ}
D.~Ghioca, T.~J. Tucker, and M.~E. Zieve, \emph{Intersections of polynomials
  orbits, and a dynamical {M}ordell-{L}ang conjecture}, Invent. Math.
  \textbf{171} (2008), no.~2, 463--483.

\bibitem[Gub08]{Gubler}
W.~Gubler, \emph{Equidistribution over function fields}, Manuscripta Math.
  \textbf{127} (2008), no.~4, 485--510.

\bibitem[GY16]{GY}
D.~Ghioca and H.~Ye, \emph{The dynamical andre-oort conjecture for cubic
  polynomials}, available at arXiv:1603.05303, to appear in {\em Internat.
  Math. Res. Notices}, 2016.

\bibitem[Hin13]{Hindes1}
W.~Hindes, \emph{Points on elliptic curves parametrizing dynamical {G}alois
  groups}, Acta Arith. \textbf{159} (2013), no.~2, 149--167.

\bibitem[Hin15]{Hindes2}
\bysame, \emph{The arithmetic of curves defined by iteration}, Acta Arith.
  \textbf{169} (2015), no.~1, 1--27.

\bibitem[HJM15]{HamblenJonesMadhu}
S.~Hamblen, R.~Jones, and K.~Madhu, \emph{The density of primes in orbits of
  {$z^d+c$}}, Int. Math. Res. Not. IMRN (2015), no.~7, 1924--1958.

\bibitem[HS00]{HindrySilverman}
M.~Hindry and J.~H. Silverman, \emph{Diophantine geometry}, Graduate Texts in
  Mathematics, vol. 201, Springer-Verlag, New York, 2000, An introduction.

\bibitem[Hua17]{Keping}
K.~Huang, \emph{Generalized greatest common divisors for the orbits under
  rational functions}, available at arXiv:1702.03881, 2017.

\bibitem[Ing13]{IngramUniformization}
P.~Ingram, \emph{Arboreal {G}alois representations and uniformization of
  polynomial dynamics}, Bull. Lond. Math. Soc. \textbf{45} (2013), no.~2,
  301--308.

\bibitem[IS09]{IngramSilverman}
P.~Ingram and J.~H. Silverman, \emph{Primitive divisors in arithmetic
  dynamics}, Math. Proc. Cambridge Philos. Soc. \textbf{146} (2009), no.~2,
  289--302.

\bibitem[JKMT16]{JKMT}
J.~Juul, P.~Kurlberg, K.~Madhu, and T.~J. Tucker, \emph{Wreath products and
  proportions of periodic points}, Int. Math. Res. Not. IMRN (2016), no.~13,
  3944--3969.

\bibitem[JL16]{RafeAlonEventualStability}
R.~Jones and A.~Levy, \emph{Eventually stable rational functions}, available at
  arXiv:1603.00673, to appear in {\em Int. J. Number Theory.}, 2016.

\bibitem[JM14]{JonesManes}
R.~Jones and M.~Manes, \emph{Galois theory of quadratic rational functions},
  Comment. Math. Helv. \textbf{89} (2014), no.~1, 173--213.

\bibitem[Jon07]{JonesComp}
R.~Jones, \emph{Iterated {G}alois towers, their associated martingales, and the
  {$p$}-adic {M}andelbrot set}, Compos. Math. \textbf{143} (2007), no.~5,
  1108--1126.

\bibitem[Jon08]{JonesDensity}
\bysame, \emph{The density of prime divisors in the arithmetic dynamics of
  quadratic polynomials}, J. Lond. Math. Soc. (2) \textbf{78} (2008), no.~2,
  523--544.

\bibitem[Jon13]{RafeArborealSurvey}
\bysame, \emph{Galois representations from pre-image trees: an arboreal
  survey}, Actes de la {C}onf\'erence ``{T}h\'eorie des {N}ombres et
  {A}pplications'', Publ. Math. Besan\c{c}on Alg\`ebre Th\'eorie Nr., vol.
  2013, Presses Univ. Franche-Comt\'e, Besan\c{c}on, 2013, pp.~107--136.

\bibitem[Jon15]{JonesFixedPointFree}
\bysame, \emph{Fixed-point-free elements of iterated monodromy groups}, Trans.
  Amer. Math. Soc. \textbf{367} (2015), no.~3, 2023--2049.

\bibitem[Juu16]{Juul}
J.~Juul, \emph{Iterates of generic polynomials and generic rational functions},
  available at arXiv:1410.3814, to appear in {\em Trans. Amer. Math. Soc.},
  2016.

\bibitem[Kri13]{Krieger}
H.~Krieger, \emph{Primitive prime divisors in the critical orbit of {$z^d+c$}},
  Int. Math. Res. Not. IMRN (2013), no.~23, 5498--5525.

\bibitem[Loo16]{Looper}
N.~Looper, \emph{Dynamical galois groups of trinomials and odoni's conjecture},
  available at arXiv:1609.03398v2, 14 pages, 2016.

\bibitem[MS94]{MortonSilverman}
P.~Morton and J.~H. Silverman, \emph{Rational periodic points of rational
  functions}, Internat. Math. Res. Notices (1994), no.~2, 97--110.

\bibitem[MS14]{MedvedevScanlon}
A.~Medvedev and T.~Scanlon, \emph{Invariant varieties for polynomial dynamical
  systems}, Ann. of Math. (2) \textbf{179} (2014), no.~1, 81--177.

\bibitem[Odo85]{OdoniIterates}
R.~W.~K. Odoni, \emph{The {G}alois theory of iterates and composites of
  polynomials}, Proc. London Math. Soc. (3) \textbf{51} (1985), no.~3,
  385--414.

\bibitem[Odo88]{OdoniWreathProducts}
\bysame, \emph{Realising wreath products of cyclic groups as {G}alois groups},
  Mathematika \textbf{35} (1988), no.~1, 101--113.

\bibitem[Pin13a]{PinkFinitenessLiftability}
R.~Pink, \emph{Finiteness and liftability of postcritically finite morphisms in
  arbitrary characteristic}, available at arXiv:1305.2841, 2013.

\bibitem[Pin13b]{PinkQuadraticInfiniteOrbits}
\bysame, \emph{Profinite iterated monodromy groups arising from quadratic
  morphisms with infinite postcritical orbits}, available at arXiv:1309.5804,
  2013.

\bibitem[Pin13c]{PinkQuadratic}
\bysame, \emph{Profinite iterated monodromy groups arising from quadratic
  polynomials}, available at arXiv:1307.5678, 2013.

\bibitem[Rez79]{Reznick}
B.~Reznick, \emph{When is the iterate of a formal power series odd?}, J.
  Austral. Math. Soc. Ser. A \textbf{28} (1979), no.~1, 62--66.

\bibitem[Rit20]{Ritt2}
J.~F. Ritt, \emph{On the iteration of rational functions}, Trans. Amer. Math.
  Soc. \textbf{21} (1920), no.~3, 348--356.

\bibitem[Rit23]{Ritt1}
\bysame, \emph{Permutable rational functions}, Trans. Amer. Math. Soc.
  \textbf{25} (1923), no.~3, 399--448.

\bibitem[Ser72]{Serre}
J.-P. Serre, \emph{Propri\'et\'es galoisiennes des points d'ordre fini des
  courbes elliptiques}, Invent. Math. \textbf{15} (1972), no.~4, 259--331.

\bibitem[Sil05]{SilBlowup}
J.~H. Silverman, \emph{Generalized greatest common divisors, divisibility
  sequences, and {V}ojta's conjecture for blowups}, Monatsh. Math. \textbf{145}
  (2005), no.~4, 333--350.

\bibitem[Sil07]{SilvermanADS}
\bysame, \emph{The arithmetic of dynamical systems}, Graduate Texts in
  Mathematics, vol. 241, Springer, New York, 2007.

\bibitem[Sto92]{Stoll}
M.~Stoll, \emph{Galois groups over {${\bf Q}$} of some iterated polynomials},
  Arch. Math. (Basel) \textbf{59} (1992), no.~3, 239--244.

\bibitem[Voj87]{VojtaBook}
P.~Vojta, \emph{Diophantine approximations and value distribution theory},
  Lecture Notes in Mathematics, vol. 1239, Springer-Verlag, Berlin, 1987.

\bibitem[Xie15]{XieDMLA2}
J.~Xie, \emph{The {D}ynamical {M}ordell-{L}ang {C}onjecture for polynomial
  endomorphisms of the affine plane}, available at arXiv:1503.00773, 2015.

\bibitem[Yua08]{Yuan}
X.~Yuan, \emph{Big line bundles over arithmetic varieties}, Invent. Math.
  \textbf{173} (2008), no.~3, 603--649.

\end{thebibliography}

\end{document}